\providecommand{\U}[1]{\protect\rule{.1in}{.1in}}
\theoremstyle{plain}
\newtheorem{thm}{Theorem}[section]
\newtheorem{lem}[thm]{Lemma}
\newtheorem{cor}[thm]{Corollary}
\newtheorem{prop}[thm]{Proposition}
\newtheorem{prob}[thm]{Problem}
\theoremstyle{definition}
\newtheorem{defi}[thm]{Definition}
\theoremstyle{remark}
\newtheorem{exmp}[thm]{Example}
\newtheorem{rem}[thm]{Remark}
\renewcommand{\epsilon}{\varepsilon}
\renewcommand{\P}{\operatorname{P}}
\newcommand{\E}{\mathbb{E}}
\renewcommand{\Re}{\mathsf{Re}}
\renewcommand{\Im}{\mathsf{Im}}
\renewcommand{\i}{\mathrm{i}}
\begin{document}

\title{Statistical Skorohod embedding problem and its generalizations}
\author{Denis Belomestny$^{1} $ and John Schoenmakers$^{2}$}
\maketitle

\begin{abstract}
Given a L\'evy process $L$, we consider the so-called statistical Skorohod
embedding problem of recovering the distribution of an independent random time
$T$ based on i.i.d. sample from $L_{T}.$ Our approach is based on the genuine
use of the Mellin and Laplace transforms. We propose a consistent estimator
for the density of $T,$ derive its convergence rates and prove their
optimality. It turns out that the convergence rates heavily depend on the
decay of the Mellin transform of $T.$ We also consider the application of our
results to the problem of statistical inference for variance-mean mixture
models and for time-changed L\'evy processes.

\end{abstract}

\footnotetext[1]{D. Belomestny, Duisburg-Essen University and National
Research University Higher School of Economics, Thea-Leymann-Str. 9, D-45127
Essen, Germany, \texttt{denis.belomestny@uni-due.de}
\par
Partially supported by the Deutsche Forschungsgemeinschaft through the SFB 823
``Statistical modeling of nonlinear dynamic processes'' and by Laboratory for
Structural Methods of Data Analysis in Predictive Modeling, MIPT, RF
government grant, ag. 11.G34.31.0073.} \footnotetext[2]{Weierstrass Institute
for Applied Analysis and Stochastics, Mohrenstr. 39, 10117 Berlin, Germany,
\texttt{schoenma@wias-berlin.de}} \emph{Keywords:} Skorohod embedding problem,
L\'evy process, Mellin transform, Laplace transform, variance mixture models,
time-changed L\'evy processes.

\section{Introduction}

The so called Skorohod embedding (SE) problem or Skorohod stopping problem was
first stated and solved by Skorohod in 1961. This problem can be formulated as follows.

\begin{prob}
[Skorohod Embedding Problem]For a given probability measure $\mu$ on
$\mathbb{R},$ such that $\int|x| d\mu(x)<\infty$ and $\int x d\mu(x)=0,$ find
a stopping time $T$ such that $B_{T}\sim\mu$ and $B_{T\wedge t}$ is a
uniformly integrable martingale.
\end{prob}

The SE problem has recently drawn much attention in the literature, see e.g.
Ob{\l }{\'o}j, \cite{obloj2004skorokhod}, where the list of references
consists of more than 100 items. In fact, there is no unique solution to the
SE problem and there are currently more than $20$ different solutions
available. This means that from a statistical point of view, the SE problem is
not well posed. In this paper we first study what we call \textit{statistical
Skorohod embedding} (SSE) problem.

\begin{prob}
[Statistical Skorohod Embedding Problem]\label{stat_skorohod} Based on i.i.d.
sample $X_{1},\ldots, X_{n}$ from the distribution of $B_{T}$ consistently
estimate the distribution of the random time $T\geq0,$ where $B$ and $T$ are
assumed to be independent.
\end{prob}

The independence of $B$ and $T$ is needed to ensure the identifiability of the
distribution of $T$ from the distribution of $B_T$. It is shown that the SSE
problem is closely related to the multiplicative deconvolution problem. Using
the Mellin transform technique, we construct a consistent estimator for the
density of $T$ and derive its convergence rates in different norms.
Furthermore, we show that the obtained rates are optimal in minimax sense. The
asymptotic normality of the proposed estimator is addressed as well. Next, we
generalize the SSE problem by replacing the standard Brownian motion with a
general L\'evy process. The generalized SSE problem turns out to be much more
involved and its solution requires some new ideas. Using a genuine combination
of the Laplace and Mellin transforms, we construct a consistent estimator,
derive its minimax convergence rates and prove that these rates basically
coincide with the rates in the SSE problem.

Some particular cases of generalized statistical Skorohod embedding problem
have been already studied in the literature. For example, the case of the
stopped Poisson process was considered in the recent paper of Comte and
Genon-Catalot, \cite{comteadaptive}.

\section{Statistical Skorohod embedding problem}

Let $B$ be a Brownian motion and let a random variable $T\geq0$ be independent
of $B.$ We then have,%
\begin{equation}
X:=B_{T}\sim\sqrt{T}\,B_{1} \label{scaling_prop_bm}%
\end{equation}
and the problem of reconstructing $T$ is related to a multiplicative
deconvolution problem. While for additive deconvolution problems the Fourier
transform plays an important role, here we can conveniently use the Mellin transform.

\begin{defi}
Let $\xi$ be a non-negative random variable with a probability density
$p_{\xi}$, then the \textit{Mellin transform} of $p_{\xi}$ is defined via
\begin{equation}
\mathcal{M}[p_{\xi}](z):=\mathbb{E}[\xi^{z-1}]=\int_{0}^{\infty}p_{\xi
}(x)x^{z-1}\,dx \label{def}%
\end{equation}
for all $z\in\mathcal{S}_{\xi}$ with $\mathcal{S}_{\xi}=\bigl\{z\in
\mathbb{C}:\mathbb{E}[\xi^{\Re z-1}]<\infty\bigr\}.$
\end{defi}

Since $p_{\xi}$ is a density, it is integrable and so at least $\left\{
z\in\mathbb{C}:\Re(z)=1\right\}  \subset\mathcal{S}_{\xi}.$ Under mild
assumptions on the growth of $p_{\xi}$ near the origin, one obtains
\[
\left\{  z\in\mathbb{C}:0\leq a_{\xi}<\Re(z)<b_{\xi}\right\}  \subset
\mathcal{S}_{\xi}%
\]
for some $0\leq a_{\xi}<1\leq b_{\xi}.$ Then the Mellin transform (\ref{def})
exists and is analytic in the strip $a_{\xi}<\operatorname{Re}z<b_{\xi}.$ For
example, if $p_{\xi}$ is essentially bounded in a right-hand neighborhood of
zero, we may take $a_{\xi}=0.$ The role of the Mellin transform in probability
theory is mainly related to the product of independent random variables: in
fact it is well-known that the probability density of the product of two
independent random variables is given by the Mellin convolution of the two
corresponding densities. Due to \eqref{scaling_prop_bm}, the SSE problem is
closely connected to the Mellin convolution. Suppose that the random time $T$
has a density $p_{T}$ and that we may take $0\leq a_{T}<1\leq b_{T}.$ Since
$\mathcal{S}_{|B_{1}|}\supset\left\{  z\in\mathbb{C}:\Re(z)>0\right\}  ,$ we
derive for $\max(2a_{T}-1,0)<\Re(z)<2b_{T}-1,$
\begin{multline*}
\mathcal{M}[p_{|X|}](z)=\mathbb{E}\bigl[|B_{1}|^{z-1}\bigr]\mathbb{E}%
\bigl[T^{(z-1)/2}\bigr]\\
=\mathcal{M}[p_{|B_{1}|}](z)\mathcal{M}[p_{T}]((z+1)/2)=\frac{2^{(z-1)/2}%
}{\sqrt{\pi}}\Gamma(z/2)\mathcal{M}[p_{T}]((z+1)/2).
\end{multline*}
As a result
\[
\mathcal{M}[p_{T}](z)=\frac{\sqrt{\pi}}{2^{z-1}}\frac{\mathcal{M}%
[p_{|X|}](2z-1)}{\Gamma(z-1/2)},\text{ \ \ }\max(a_{T},1/2)<\Re(z)<b_{T}%
\]
and the Mellin inversion formula yields
\begin{align}
p_{T}(x)  &  =\frac{1}{2\pi}\int_{\gamma-{\i}\infty}^{\gamma+{\i}\infty
}x^{-\gamma-{\i} v}\mathcal{M}[p_{T}](\gamma+{\i} v)\,dv\text{ }%
\label{inverse_mellin}\\
&  =\frac{1}{\sqrt{\pi}}\int_{-\infty}^{\infty}x^{-\gamma-{\i} v}%
\frac{\mathcal{M}[p_{|X|}](2\left(  \gamma+{\i} v\right)  -1)}{2^{\gamma+{\i}
v}\Gamma(\gamma+{\i} v-1/2)}\,dv\text{\ \ for \ }\max(a_{T},1/2)<\gamma
<b_{T},\text{ \ \ }x>0.\nonumber
\end{align}
Furthermore, the Mellin transform of $p_{|X|}$ can be directly estimated from
the data $X_{1},\ldots,X_{n}$ via the empirical Mellin transform:
\begin{equation}
\mathcal{M}_{n}[p_{|X|}](z):=\frac{1}{n}\sum_{k=1}^{n}|X_{k}|^{z-1},\text{
\ \ }\Re(z)>1/2, \label{melest}%
\end{equation}
where the condition $\Re(z)>1/2$ guarantees that the variance of the estimator
(\ref{melest}) is finite$.$ Note however that the integral in
\eqref{inverse_mellin} may fail to exist if we replace $\mathcal{M}[p_{|X|}]$
by $\mathcal{M}_{n}[p_{|X|}].$ We so need to regularize the inverse Mellin
operator. To this end, let us consider a kernel $K(\cdot)\geq0$ supported on
$[-1,1]$ and a sequence of bandwidths $h_{n}$ $>0$ tending to $0$ as
$n\rightarrow\infty.$ Then we define, in view of (\ref{melest}), for some
$\max(a_{T},3/4)<\gamma<b_{T},$
\begin{equation}
\label{p_Tn_bm}p_{T,n}(x):=\frac{1}{\sqrt{\pi}}\int_{-\infty}^{\infty
}x^{-\gamma-{\i} v}K(vh_{n})\frac{\mathcal{M}_{n}[p_{|X|}](2(\gamma+{\i}
v)-1)}{2^{\gamma+{\i} v}\Gamma(\gamma-1/2+{\i} v)}\,dv.
\end{equation}
For our convergence analysis, we will henceforth take the simplest kernel
\[
K(y)=1_{[-1,1]}(y),
\]
but note that in principle other kernels may be considered as well. The next
theorem states that $p_{T,n}$ converges to $p_{T}$ at a polynomial rate,
provided the Mellin transform of $p_{T}$ decays exponentially fast. We shall
use throughout the notation $A\lesssim B$ if $A$ is bounded by a constant
multiple of $B$, independently of the parameters involved, that is, in the
Landau notation $A=O(B)$.

\begin{thm}
\label{sep_conv_rates} For any $\beta>0,$ $\gamma>0$ and $L>0$, introduce the
class of functions
\[
\mathcal{C}(\beta,\gamma,L)=\left\{  f:\int_{-\infty}^{\infty}\left\vert
\mathcal{M}[f](\gamma+{\i} v)\right\vert e^{\beta\left\vert v\right\vert
}\,dv<L\right\}  .
\]
Assume that $p_{T}\in\mathcal{C}(\beta,\gamma,L)$ for some $\beta>0,$ $L>0$
and
\begin{equation}
\max((a_{T}+1)/2,3/4)<\gamma<b_{T}. \label{sg}%
\end{equation}
Then for some constant $C_{\gamma,L}$ depending on $\gamma$ and $L$ only, it
holds
\begin{equation}
\sup_{x\geq0}\mathbb{E}\Bigl[\bigl\{x^{\gamma}|p_{T}(x)-p_{T,n}(x)|\bigr\}^{2}%
\Bigr]\leq C_{\gamma,L}\times%
\begin{cases}
e^{-2\beta/h_{n}}+\frac{1}{n}h_{n}^{2\left(  \gamma-1\right)  }e^{\pi/h_{n}%
}, & \gamma<1,\\
e^{-2\beta/h_{n}}+\frac{1}{n} e^{\pi/h_{n}}, & \gamma\geq1.
\end{cases}
\label{ms}%
\end{equation}
By next choosing
\begin{equation}
h_{n}=
\begin{cases}
\frac{\pi+2\beta}{\log n-2(1-\gamma)\log\log n}, & \gamma<1,\\
(\pi+2\beta)/\log n, & \gamma\geq1,
\end{cases}
\label{hn}%
\end{equation}
we arrive at the rate
\begin{equation}
\label{rates_bm_pol}\sup_{x\geq0}\sqrt{\mathbb{E}\Bigl[\bigl\{x^{\gamma}%
|p_{T}(x)-p_{T,n}(x)|\bigr\}^{2}\Bigr]}\lesssim%
\begin{cases}
n^{-\frac{\beta}{\pi+2\beta}}\log^{\frac{2(1-\gamma)\beta}{\pi+2\beta}}n, &
\gamma<1,\\
n^{-\frac{\beta}{\pi+2\beta}}, & \gamma\geq1
\end{cases}
\end{equation}
as $n\rightarrow\infty.$
\end{thm}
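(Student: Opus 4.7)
The plan is to control the pointwise weighted mean square error via the standard bias--variance decomposition
$$\E\bigl[\{x^{\gamma}|p_T(x)-p_{T,n}(x)|\}^2\bigr]\;\le\;2\,x^{2\gamma}|p_T(x)-\E\,p_{T,n}(x)|^2+2\,x^{2\gamma}\Var(p_{T,n}(x)),$$
bound each term uniformly in $x\ge 0$, and finally optimize $h_n$ to balance the two contributions; the claimed rate (\ref{rates_bm_pol}) then drops out of (\ref{ms}) and (\ref{hn}) by direct substitution.

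For the bias I will use unbiasedness of $\mathcal{M}_n[p_{|X|}]$, so that $\E\,p_{T,n}(x)$ equals the defining integral (\ref{p_Tn_bm}) with $\mathcal{M}_n$ replaced by $\mathcal{M}$. Together with the identity $\mathcal{M}[p_T](\gamma+\i v)=\sqrt{\pi}\,2^{1-\gamma-\i v}\mathcal{M}[p_{|X|}](2(\gamma+\i v)-1)/\Gamma(\gamma-1/2+\i v)$ derived just before (\ref{inverse_mellin}), this reduces the bias to the clean tail integral
$$p_T(x)-\E\,p_{T,n}(x)\;=\;\frac{1}{2\pi}\int_{|v|>1/h_n} x^{-\gamma-\i v}\,\mathcal{M}[p_T](\gamma+\i v)\,dv.$$
Multiplying by $x^\gamma$, taking absolute values, and inserting the factor $e^{\beta|v|}e^{-\beta|v|}$ under the integral, the hypothesis $p_T\in\mathcal{C}(\beta,\gamma,L)$ yields the uniform estimate $x^{2\gamma}|p_T(x)-\E\,p_{T,n}(x)|^2\lesssim L^{2}\,e^{-2\beta/h_n}$.

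For the variance I will write $p_{T,n}(x)=\tfrac{1}{n}\sum_{k=1}^{n}Z_k(x)$, use $\Var(p_{T,n}(x))\le n^{-1}\E|Z_1(x)|^2$, and exploit $||X_1|^{2\i v}|=1$ to pull the modulus inside the $v$-integral, obtaining
$$x^{2\gamma}\,\E|Z_1(x)|^2\;\le\;\frac{\E[|X_1|^{4(\gamma-1)}]}{4^{\gamma}\pi}\left(\int_{-1/h_n}^{1/h_n}\!\frac{dv}{|\Gamma(\gamma-\tfrac{1}{2}+\i v)|}\right)^{\!2}.$$
At this point the precondition (\ref{sg}) is exactly what is needed for the moment: by the scaling (\ref{scaling_prop_bm}) and the independence of $B$ and $T$ one factorizes $\E[|X_1|^{4(\gamma-1)}]=\E[T^{2(\gamma-1)}]\,\E[|B_1|^{4(\gamma-1)}]$; the first factor is finite since $\gamma>(a_T+1)/2$ and the second forces $\gamma>3/4$. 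Stirling's asymptotic $|\Gamma(\gamma-\tfrac{1}{2}+\i v)|^{-1}\sim c_\gamma\,|v|^{1-\gamma}e^{\pi|v|/2}$ as $|v|\to\infty$, followed by one integration by parts on $[0,1/h_n]$, shows that the inner integral is of order $h_n^{\gamma-1}e^{\pi/(2h_n)}$ when $\gamma<1$ and of order $e^{\pi/(2h_n)}$ when $\gamma\ge 1$; squaring and dividing by $n$ yields exactly the variance term appearing in (\ref{ms}).

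Adding the two pieces gives (\ref{ms}); inserting the bandwidth (\ref{hn}) balances $e^{-2\beta/h_n}$ against $n^{-1}h_n^{2(\gamma-1)}e^{\pi/h_n}$, and a short computation delivers (\ref{rates_bm_pol}). The main technical obstacle is obtaining the variance bound with sharp constants in both the exponential and polynomial factors: it is the constant $\pi/2$ in the Stirling exponential that drives the exponent $\pi+2\beta$ in the final rate, while the polynomial prefactor $|v|^{1-\gamma}$ is precisely what forces the extra $h_n^{2(\gamma-1)}$ in the $\gamma<1$ regime, and hence the $\log\log n$ correction in (\ref{hn}) and the polylogarithmic $\log^{2(1-\gamma)\beta/(\pi+2\beta)}n$ factor in (\ref{rates_bm_pol}). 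Everything after that is bookkeeping.
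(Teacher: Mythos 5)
Your proposal is correct and follows essentially the same route as the paper: the bias is reduced to the tail $\frac{1}{2\pi}\int_{|v|>1/h_n}\mathcal{M}[p_T](\gamma+\i v)x^{-\gamma-\i v}dv$ and bounded by $Le^{-\beta/h_n}$ via the class condition, while the variance is controlled by $n^{-1}\E[|X_1|^{4(\gamma-1)}]$ (finite precisely because of (\ref{sg})) times $\bigl(\int_{-1/h_n}^{1/h_n}dv/|\Gamma(\gamma-\tfrac12+\i v)|\bigr)^2$, estimated through the Gamma-function asymptotics exactly as in the paper's Corollary~\ref{cor_integ_gamma}, and the bandwidth choice (\ref{hn}) balances the two terms. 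The only cosmetic differences (the factor-2 decomposition, bounding $\Var$ by $n^{-1}\E|Z_1|^2$ instead of the paper's $\Var(\int f_t\,dt)\le(\int\sqrt{\Var f_t}\,dt)^2$) do not change the argument or the constants in the rate.
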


With a little bit more effort one can prove the strong uniform convergence of
the estimate $p_{n,T}.$

\begin{thm}
\label{uniform_upper_bound} Under conditions of Theorem~\ref{sep_conv_rates}
and for $\gamma<1$
\begin{align*}
\sup_{p_{T}\in\mathcal{C}(\beta,\gamma,L)}\sup_{x\geq0}\bigl\{x^{\gamma
}|p_{T,n}(x)-p_{T}(x)|\bigr\}=O_{a.s.}\left(  n^{-\frac{\beta}{\pi+2\beta}%
}\log^{\frac{2(1-\gamma)\beta}{\pi+2\beta}}n\right)  .
\end{align*}

\end{thm}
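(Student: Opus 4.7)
The plan is to refine the mean-square argument of Theorem~\ref{sep_conv_rates} to an almost sure sup-bound. Decompose as usual
$$p_{T,n}(x)-p_T(x)=(p_{T,n}(x)-\bar p_{T,n}(x))+(\bar p_{T,n}(x)-p_T(x)),\qquad \bar p_{T,n}:=\E p_{T,n}.$$
The bias contribution $\sup_{x\ge 0}x^{\gamma}|\bar p_{T,n}(x)-p_T(x)|\lesssim e^{-\beta/h_n}$ is deterministic and follows from the proof of Theorem~\ref{sep_conv_rates} using only $p_T\in\mathcal{C}(\beta,\gamma,L)$; under the choice~\eqref{hn} it already equals the claimed rate, so only the stochastic term requires genuinely new work.

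The gain for the stochastic term comes from the observation, already implicit in the $L^2$ argument, that $|x^{-\gamma-\i v}|=x^{-\gamma}$ lets one pull the absolute values inside the Mellin integral, yielding the \emph{deterministic} domination
$$\sup_{x\ge 0}x^{\gamma}\bigl|p_{T,n}(x)-\bar p_{T,n}(x)\bigr|\;\leq\;\Xi_n\;:=\;\frac{1}{\sqrt{\pi}\,2^{\gamma}}\int_{-1/h_n}^{1/h_n}\frac{\bigl|\mathcal{M}_n[p_{|X|}](2(\gamma+\i v)-1)-\mathcal{M}[p_{|X|}](2(\gamma+\i v)-1)\bigr|}{|\Gamma(\gamma-1/2+\i v)|}\,dv.$$
The sup in $x$ therefore disappears for free and the problem reduces to an almost sure bound on the scalar $\Xi_n$. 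Writing $\mathcal{M}_n-\mathcal{M}=\frac{1}{n}\sum_k Y_{k,v}$ with $Y_{k,v}:=|X_k|^{2(\gamma-1)+2\i v}-\E|X|^{2(\gamma-1)+2\i v}$, the variance of $Y_{k,v}$ is uniformly bounded in $v$ by $\E|X|^{4(\gamma-1)}$, which is finite because the lower bound $\gamma>3/4$ in~\eqref{sg} is precisely $4(\gamma-1)>-1$. Applying Bernstein's inequality to a truncation of $|X_k|^{2(\gamma-1)}$ at a polynomial level, combined with a discretization of $[-1/h_n,1/h_n]$ on a polynomial grid and a union bound (Lipschitz continuity in $v$ being controlled by $n^{-1}\sum_k|X_k|^{2(\gamma-1)}|\log|X_k||$, a.s.\ bounded by the SLLN), yields summable tails for $\sup_{|v|\leq 1/h_n}|\mathcal{M}_n-\mathcal{M}|$. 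Integrating against the asymptotic $|\Gamma(\gamma-1/2+\i v)|^{-1}\sim C|v|^{1-\gamma}e^{\pi|v|/2}$ and invoking Borel--Cantelli then produces the asserted a.s.\ rate.

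The main technical subtlety, to which the phrase ``a little bit more effort'' alludes, is preserving the logarithmic exponent of the $L^2$ rate: a naive combination of $\sup_v$-concentration with the $L^1$-norm of $|\Gamma|^{-1}$ produces an extra $\sqrt{\log n}$. This loss is avoided by applying a Rosenthal-type moment bound directly to $\Xi_n$, using all moment orders below $1/(2(1-\gamma))$ (finite under the hypothesis on $\gamma$) and then Markov's inequality with moment index growing like $\log n$, so that the final logarithmic exponent matches $2(1-\gamma)\beta/(\pi+2\beta)$ exactly.
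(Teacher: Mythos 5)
Your overall route is the same as the paper's: the bias is handled exactly as in Theorem~\ref{sep_conv_rates} and is deterministic over the class, and for the stochastic part you use $|x^{-\gamma-\i v}|=x^{-\gamma}$ to reduce the weighted sup over $x$ to a single integral functional of $\sup_{|v|\le 1/h_n}\bigl|\mathcal{M}_n[p_{|X|}]-\mathcal{M}[p_{|X|}]\bigr|$, which you then control almost surely by truncation, a polynomial grid in $v$ with a union bound, Bernstein's inequality and Borel--Cantelli. This is precisely the content of the paper's Proposition~\ref{ExpBounds} (applied with $Z_j=\log|X_k|$ and $u=2(\gamma-1)$, whose moment hypotheses are where the restriction $\gamma>3/4$ and the finiteness of low negative moments of $|X|$ enter), followed by integration against $1/|\Gamma(\gamma-1/2+\i v)|$ via Corollary~\ref{cor_integ_gamma}; so up to packaging, your main argument and the paper's proof coincide.

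The genuine problem is your last paragraph. Having capped the usable moment orders at $q<1/(2(1-\gamma))$ (correctly, since $\E|X|^{2q(\gamma-1)}$ requires negative moments of $|X|$ that exist only up to a fixed finite order), you cannot then apply Markov's inequality ``with moment index growing like $\log n$'': the admissible index is a fixed constant, and a Rosenthal bound at a fixed order yields only polynomially decaying tails, which is not enough to kill the $\sqrt{\log n}$ (or the $\log(e+|v|)$ weight appearing in the uniform-in-$v$ bound) and certainly does not by itself recover the exact logarithmic exponent $2(1-\gamma)\beta/(\pi+2\beta)$. So the claimed log-exact sharpening is not established by the argument you sketch; what is actually proved by your (and the paper's) method is the a.s.\ bound with the stochastic term of order $\sqrt{\log n/n}\,\log(1/h_n)\,h_n^{\gamma-1}e^{\pi/(2h_n)}$, and the stated rate is then obtained by the exponential bias--variance trade-off under the choice \eqref{hn}, with the additional logarithmic factors absorbed at that level rather than eliminated by a growing-moment device. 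Either drop the final refinement and state the bound with the harmless extra log factors, or give a genuinely different argument (e.g.\ a sharper maximal inequality) if you insist on the exact exponent.
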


Let us turn now to some examples.

\begin{exmp}
\label{exmp_gamma} Consider the class of Gamma densities
\begin{align*}
p_{T}(x;\alpha)=\frac{x^{\alpha-1}\cdot e^{-x}}{\Gamma(\alpha)}, \quad x\geq0
\end{align*}
for $\alpha>0.$ Since
\begin{align*}
\mathcal{M}[p_{T}](z)=\frac{\Gamma(z+\alpha-1)}{\Gamma(\alpha)},
\quad\mathsf{Re}(z)>0,
\end{align*}
we derive that $p_{T}\in\mathcal{C}(\beta,\gamma,L)$ for all $0<\beta<\pi/2,$
$\gamma>0$ and some $L=L(\beta,\gamma)$ due to the asymptotic properties of
the Gamma function (see Lemma~\ref{lemma_gamma_asymp} in Appendix). As a
result, Theorem~\ref{sep_conv_rates} implies
\begin{align*}
\sup_{x\geq0}\sqrt{\mathbb{E}\Bigl[\bigl\{x^{\gamma}|p_{T}(x)-p_{T,n}%
(x)|\bigr\}^{2}\Bigr]}\lesssim n^{-\rho},\quad n\rightarrow\infty
\end{align*}
for any $\rho<1/4,$ provided $\gamma\geq1.$
\end{exmp}

\begin{exmp}
Let us look at the family of densities
\begin{align*}
p_{T}(x;q)=\frac{q\sin(\pi/q)}{\pi}\frac{1}{1+x^{q}}, \quad q\geq2,\quad
x\geq0.
\end{align*}
We have
\begin{align*}
\mathcal{M}[p_{T}](z)=\frac{\sin(\pi/q)}{\sin(\pi z/q)},\quad0<\mathsf{Re}%
(z)<q.
\end{align*}
Therefore, $p_{T}\in\mathcal{C}(\beta,\gamma,L)$ for all $0<\beta<\pi/q,$
$\gamma>0$ and $L=L(\beta,\gamma),$ implying
\begin{align*}
\sup_{x\geq0}\sqrt{\mathbb{E}\Bigl[\bigl\{x^{\gamma}|p_{T}(x)-p_{T,n}%
(x)|\bigr\}^{2}\Bigr]}\lesssim n^{-\rho},\quad n\rightarrow\infty
\end{align*}
for any $\rho<1/(2+q),$ provided $\gamma\geq1.$
\end{exmp}

If $\mathcal{M}[p_{T}]$ decays polynomially fast, we get the following result.

\begin{thm}
\label{sep_log_rates} Consider the class of functions%
\[
\mathcal{D}(\beta,\gamma,L)=\left\{  f:\int_{-\infty}^{\infty}\left\vert
\mathcal{M}[f](\gamma+{\i} v)\right\vert (1+|v|^{\beta})\,dv<L\right\}  ,
\]
and assume that $p_{T}\in\mathcal{D}(\beta,\gamma,L)$ for some $\beta>0$ and
$L>0$ and $\gamma$ as in (\ref{sg}). Then for some constant $D_{\gamma,L},$ it
holds
\begin{equation}
\sup_{x\geq0}\mathbb{E}\Bigl[\bigl\{x^{\gamma}|p_{T}(x)-p_{T,n}(x)|\bigr\}^{2}%
\Bigr]\leq D_{\gamma,L}\times%
\begin{cases}
h_{n}^{2\beta}+\frac{1}{n}h_{n}^{2\left(  \gamma-1\right)  }e^{\pi/h_{n}}, &
\gamma<1,\\
h_{n}^{2\beta}+\frac{1}{n} e^{\pi/h_{n}}, & \gamma\geq1.
\end{cases}
\label{ms1}%
\end{equation}
By choosing
\begin{equation}
h_{n}=\frac{\pi}{\log n-2\left(  \beta+1-\gamma\right)  \log\log n},
\label{hn1}%
\end{equation}
if $\gamma<1$ and
\begin{equation}
h_{n}=\frac{\pi}{\log n-2\beta\log\log n} \label{hn2}%
\end{equation}
for $\gamma\geq1,$ we arrive at
\begin{equation}
\sup_{x\geq0}\sqrt{\mathbb{E}\Bigl[\bigl\{x^{\gamma}|p_{T}(x)-p_{T,n}%
(x)|\bigr\}^{2}\Bigr]}\lesssim\log^{-\beta}(n),\quad n\rightarrow\infty.
\label{arr1}%
\end{equation}

\end{thm}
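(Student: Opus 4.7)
The plan is to follow the standard bias-variance decomposition and to reuse the variance analysis from the proof of Theorem~\ref{sep_conv_rates} verbatim, since the estimator $p_{T,n}$ is identical in both theorems and the stochastic part of the error is insensitive to the smoothness class imposed on $p_T$. Only the deterministic bias needs to be redone under the polynomial-decay hypothesis $p_T\in\mathcal{D}(\beta,\gamma,L)$.

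For the bias, the identity $\mathcal{M}[p_T](z)=\sqrt{\pi}\,2^{1-z}\,\mathcal{M}[p_{|X|}](2z-1)/\Gamma(z-1/2)$ established before (\ref{inverse_mellin}) shows that $\E[p_{T,n}(x)]$ equals the Mellin-inversion integral for $p_T$ with its $v$-integration truncated to $|v|\leq 1/h_n$. Subtracting from the untruncated inversion formula of $p_T$, using $|x^{-\i v}|=1$ for $x>0$, and exploiting $1\leq h_n^{\beta}|v|^{\beta}$ on the tail $\{|v|>1/h_n\}$ together with $p_T\in\mathcal{D}(\beta,\gamma,L)$ gives
\[
\sup_{x>0}\bigl|x^{\gamma}(p_T(x)-\E[p_{T,n}(x)])\bigr|\leq\frac{1}{2\pi}\int_{|v|>1/h_n}|\mathcal{M}[p_T](\gamma+\i v)|\,dv\leq\frac{L\,h_n^{\beta}}{2\pi}.
\]
Squaring yields the $h_n^{2\beta}$ term in (\ref{ms1}).

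For the variance, I import the bound established in the proof of Theorem~\ref{sep_conv_rates}, namely
\[
\sup_{x\geq 0}x^{2\gamma}\,\E\bigl|p_{T,n}(x)-\E[p_{T,n}(x)]\bigr|^{2}\lesssim\frac{1}{n}\,h_n^{2(\gamma-1)}e^{\pi/h_n}\ (\gamma<1),\quad \lesssim\ \frac{1}{n}\,e^{\pi/h_n}\ (\gamma\geq 1),
\]
whose derivation rests on Stirling's asymptotic $|\Gamma(\gamma-1/2+\i v)|\asymp|v|^{\gamma-1}e^{-\pi|v|/2}$ applied to $\int_{|v|\leq 1/h_n}|\Gamma(\gamma-1/2+\i v)|^{-1}dv$ and on the finiteness of a suitable absolute moment of $|X|$ guaranteed by (\ref{sg}); both ingredients are independent of the class containing $p_T$. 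Summing the squared bias and the variance produces (\ref{ms1}).

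Finally, I plug in the proposed bandwidths to balance the two contributions. For $\gamma\geq 1$, the choice $h_n=\pi/(\log n-2\beta\log\log n)$ gives $e^{\pi/h_n}/n\asymp(\log n)^{-2\beta}\asymp h_n^{2\beta}$. For $\gamma<1$, $h_n=\pi/(\log n-2(\beta+1-\gamma)\log\log n)$ yields $h_n^{2(\gamma-1)}e^{\pi/h_n}/n\asymp(\log n)^{2(1-\gamma)-2(\beta+1-\gamma)}=(\log n)^{-2\beta}\asymp h_n^{2\beta}$. Taking square roots delivers (\ref{arr1}). I do not anticipate any serious obstacle; the only point demanding care is tracking how the negative-exponent factor $h_n^{2(\gamma-1)}$ in the $\gamma<1$ variance forces the slightly larger logarithmic correction $2(\beta+1-\gamma)\log\log n$ in (\ref{hn1}) relative to the correction $2\beta\log\log n$ in (\ref{hn2}).
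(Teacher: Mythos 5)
Your proposal is correct and follows essentially the same route as the paper: the paper's proof of Theorem~\ref{sep_log_rates} likewise keeps the variance analysis of Theorem~\ref{sep_conv_rates} unchanged and only replaces the bias bound by $\sup_{x\geq0}\{x^{\gamma}|\mathbb{E}[p_{T,n}(x)]-p_{T}(x)|\}\leq \frac{L}{2\pi}h_{n}^{\beta}$, obtained exactly as you do from $1\leq h_{n}^{\beta}|v|^{\beta}$ on $\{|v|>1/h_{n}\}$. Your bandwidth calculations balancing $h_{n}^{2\beta}$ against the variance term (including the extra $2(1-\gamma)\log\log n$ correction for $\gamma<1$) also match the paper's choices (\ref{hn1})--(\ref{hn2}) and yield (\ref{arr1}).
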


\begin{rem}
Due to the relation
\begin{align*}
\mathcal{M}[p_{T}](\gamma+{\i} v)=\mathcal{F}[e^{\gamma\cdot}p_{T}(e^{\cdot
})](v),\quad a_{T}<\gamma<b_{T},
\end{align*}
the conditions $p_{T}\in\mathcal{C}(\beta,\gamma,L)$ and $p_{T}\in
\mathcal{D}(\beta,\gamma,L)$ are closely related to the smoothness properties
of the function $e^{\gamma x}p_{T}(e^{x}).$ For example, if $p_{T}%
\in\mathcal{C}(\beta,\gamma,L),$ then
\begin{align*}
\int_{-\infty}^{\infty}\left\vert \mathcal{F}[e^{\gamma\cdot}p_{T}(e^{\cdot
})](v)\right\vert e^{\beta\left\vert v\right\vert }\,dv<L
\end{align*}
and the function $e^{\gamma x}p_{T}(e^{x})$ is called supersmooth in this
case, see Meister \cite{meister2009deconvolution} for the discussion on
different smoothness classes in the context of the additive deconvolution problems.
\end{rem}

\begin{table}[ptb]
\begin{center}%
\begin{tabular}
[c]{|c|c|c|}\hline
\multicolumn{2}{|c|}{} & \\
\multicolumn{2}{|c|}{$\mathcal{C}(\beta,\gamma,L) $} & $\mathcal{D}%
(\beta,\gamma,L) $\\
\multicolumn{2}{|c|}{} & \\\hline\hline
$\gamma<1$ & $\gamma\geq1$ & \\\cline{1-2}
&  & \\
$n^{-\frac{\beta}{\pi+2\beta}}\log^{\frac{2(1-\gamma)\beta}{\pi+2\beta}}(n)$ &
$n^{-\frac{\beta}{\pi+2\beta}}$ & $\log^{-\beta}(n) $\\
&  & \\\hline
\end{tabular}
\end{center}
\caption{Minimax rates of convergence for the classes $\mathcal{C}%
(\beta,\gamma,L)$ and $\mathcal{D}(\beta,\gamma,L).$ }%
\label{RCS}%
\end{table}The rates of Theorem~\ref{sep_conv_rates} and
Theorem~\ref{sep_log_rates} summarized in Table~\ref{RCS} are in fact optimal
(up to a logarithmic factor) in minimax sense for the classes $\mathcal{C}%
(\beta,\gamma,L)$ and $\mathcal{D}(\beta,\gamma,L),$ respectively.

\begin{thm}
\label{low_bound} Fix some $\beta>1.$ There are $\varepsilon>0$ and $x>0$ such
that
\begin{align*}
&  \liminf_{n\to\infty}\inf_{p_{n}}\sup_{p_{T}\in\mathcal{C}(\beta,\gamma
,L)}\P ^{\otimes n}_{p_{T}}\Bigl(|p_{T}(x)-p_{n}(x)|\ge\varepsilon\,
n^{-\frac{\beta}{\pi+2\beta}}\log^{-\rho}(n)\Bigr)>0,\\
&  \liminf_{n\to\infty}\inf_{p_{n}}\sup_{p_{T}\in\mathcal{D}(\beta,\gamma
,L)}\P ^{\otimes n}_{p_{T}}\Bigl(|p_{T}(x)-p_{n}(x)|\ge\varepsilon\log
^{-\beta}(n)\Bigr)>0,
\end{align*}
for some $\rho>0,$ where the infimum is taken over all estimators (i.e. all
measurable functions of $X_{1},\ldots,X_{n}$) of $p_{T}$ and $\P ^{\otimes
n}_{p_{T}}$ is the distribution of the i.i.d. sample $X_{1},\ldots, X_{n}$
with $X_{1}\sim W_{T}$ and $T\sim p_{T}.$
\end{thm}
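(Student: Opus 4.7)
I would establish both lower bounds by Le Cam's two-hypothesis method. Fix $x_{\ast}=1$. For each $n$, I construct two densities $p_{0},p_{1,n}\in\mathcal{C}(\beta,\gamma,L)$ such that
\[
|p_{1,n}(1)-p_{0}(1)|\;\ge\;2\varepsilon\,n^{-\beta/(\pi+2\beta)}\log^{-\rho}n,
\]
while the sampling laws $P_{0},P_{1,n}$ of $X_{1}\sim B_{T}$ satisfy $H^{2}(P_{0}^{\otimes n},P_{1,n}^{\otimes n})\le c<1$. The standard reduction from pointwise estimation to testing then forces every estimator $p_{n}$ to err by at least $\varepsilon\,n^{-\beta/(\pi+2\beta)}\log^{-\rho}n$ at $x_{\ast}$ with probability bounded away from $0$ under at least one of the two hypotheses, giving the first claim; the second is obtained by an analogous construction in the polynomial regime.

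Take $p_{0}$ to be a fixed smooth density lying in the interior of $\mathcal{C}(\beta,\gamma,L)$, for instance the Gamma density of Example~\ref{exmp_gamma} (whose Mellin transform decays strictly faster than $e^{-\beta|v|}$ for $\beta<\pi/2$), and set
\[
p_{1,n}(x)\;=\;p_{0}(x)\;+\;\delta_{n}\,x^{-\gamma}\phi(\log x)\cos(\omega_{n}\log x)\;+\;\delta_{n}r_{n}(x),
\]
where $\phi\in C_{c}^{\infty}(\mathbb{R})$ satisfies $\phi(0)>0$ and $r_{n}$ is a deterministic correction of order $o(1)$ (Riemann--Lebesgue applied to $\int e^{(1-\gamma)y}\phi(y)\cos(\omega_{n}y)\,dy$) ensuring $\int p_{1,n}=1$ and, for $n$ large, $p_{1,n}\ge 0$. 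Since $\cos(\omega_{n}\log 1)=1$, the pointwise gap at $x_{\ast}=1$ equals $\delta_{n}\phi(0)+o(\delta_{n})$. The change of variable $y=\log x$ gives
\[
\mathcal{M}\bigl[x^{-\gamma}\phi(\log x)\cos(\omega_{n}\log x)\bigr](\gamma+\mathrm{i}v)\;=\;\tfrac{1}{2}\bigl[\hat{\phi}(v-\omega_{n})+\hat{\phi}(v+\omega_{n})\bigr],
\]
a pair of unit-width bumps concentrated at $v=\pm\omega_{n}$, so membership in $\mathcal{C}(\beta,\gamma,L)$ forces $\delta_{n}\asymp e^{-\beta\omega_{n}}$.

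On the sample side, the deconvolution identity $\mathcal{M}[p_{|X|}](z)=\mathcal{M}[p_{|B_{1}|}](z)\mathcal{M}[p_{T}]((z+1)/2)$ combined with the Mellin--Plancherel identity and Stirling's estimate $|\Gamma(\gamma-\tfrac{1}{2}+\mathrm{i}u)|\sim\sqrt{2\pi}\,|u|^{\gamma-1}e^{-\pi|u|/2}$ yields
\[
\int_{0}^{\infty}x^{2(2\gamma-1)-1}\bigl|p_{|X|,1,n}(x)-p_{|X|,0}(x)\bigr|^{2}\,dx\;\lesssim\;\delta_{n}^{2}\,\omega_{n}^{2(\gamma-1)}e^{-\pi\omega_{n}}.
\]
Localising the perturbation via the compact log-support of $\phi$ confines $p_{|X|,1,n}-p_{|X|,0}$ to a region where $p_{|X|,0}$ is uniformly bounded below, so this weighted $L^{2}$ bound converts into a $\chi^{2}$ bound of the same order and tensorises to $H^{2}(P_{0}^{\otimes n},P_{1,n}^{\otimes n})\le n\chi^{2}(P_{0},P_{1,n})\lesssim n\,\delta_{n}^{2}\,\omega_{n}^{2(\gamma-1)}e^{-\pi\omega_{n}}$. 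Inserting $\delta_{n}=c\,e^{-\beta\omega_{n}}$ and taking $\omega_{n}=(\pi+2\beta)^{-1}\log n$ up to a $\log\log n$ correction keeps the right-hand side below a constant $<1$, while $\delta_{n}\asymp n^{-\beta/(\pi+2\beta)}\log^{-\rho}n$ for a suitable $\rho\ge 0$ (with $\rho=0$ admissible when $\gamma\le 1$). For $\mathcal{D}(\beta,\gamma,L)$ the class constraint relaxes to $\delta_{n}\lesssim\omega_{n}^{-\beta}$, so the balance $n\delta_{n}^{2}e^{-\pi\omega_{n}}\lesssim 1$ at $\omega_{n}=\pi^{-1}\log n$ delivers $\delta_{n}\asymp(\log n)^{-\beta}$.

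The main obstacle, beyond the routine Fourier/Mellin calculations and the verification that $p_{1,n}$ is a bona fide probability density, is passing from the Mellin-weighted $L^{2}$ bound on $p_{|X|,1,n}-p_{|X|,0}$ to the unweighted $\chi^{2}$ required by Le Cam: this is handled by the compact log-support of $\phi$, which together with the strict positivity and smoothness of $p_{|X|,0}$ on any compact subset of $(0,\infty)$ makes weighted and unweighted norms equivalent on the relevant set, the Gaussian tails of $p_{|B_{1}|}$ supplying the necessary decay outside.
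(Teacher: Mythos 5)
Your overall architecture (two-point reduction, bumps of the Mellin transform at $\pm\omega_n$, calibration of $\delta_n$ against the class constraint, then $n\chi^2\lesssim n\delta_n^2 e^{-\pi\omega_n}\le\mathrm{const}$) is the same as the paper's, but your concrete construction has a genuine gap for the class $\mathcal{C}(\beta,\gamma,L)$. You take $\phi\in C_c^{\infty}$ in log-scale, so the Mellin transform of your perturbation is $\tfrac{\delta_n}{2}[\hat\phi(v-\omega_n)+\hat\phi(v+\omega_n)]$ with $\hat\phi$ only super-polynomially decaying: a nonzero compactly supported $\phi$ can never satisfy $\int|\hat\phi(u)|e^{\beta|u|}\,du<\infty$, since that would make $\phi$ the restriction of a function holomorphic in a strip, contradicting compact support. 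Consequently $\int|\mathcal{M}[p_{1,n}-p_0](\gamma+\i v)|e^{\beta|v|}dv=\infty$, so $p_{1,n}\notin\mathcal{C}(\beta,\gamma,L)$ for any $L$, and the key calibration ``membership forces $\delta_n\asymp e^{-\beta\omega_n}$'' has no basis; the first (polynomial-rate) lower bound is therefore not established. (For $\mathcal{D}(\beta,\gamma,L)$ only polynomial weights are involved, so that part of your calibration survives.) A second, related problem is your passage from the Mellin--Plancherel bound to $\chi^2$: compact log-support of the perturbation of $p_T$ does \emph{not} localize $p_{|X|,1,n}-p_{|X|,0}$, because $|X|=\sqrt{T}|B_1|$ smears any perturbation of $p_T$ over all of $(0,\infty)$; so ``the difference is confined to a region where $p_{|X|,0}$ is bounded below'' is false as stated, and you need instead a quantitative lower bound on the tails of the observation density (also note that your Gamma base lies in $\mathcal{C}(\beta,\cdot,\cdot)$ only for $\beta<\pi/2$, while the theorem allows any fixed $\beta>1$).

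The paper's construction shows how to repair both points simultaneously: it perturbs multiplicatively, $q_{1,M}=q+(q\vee\rho_M)$, where $\rho_M(x)=\tfrac{1}{\sqrt{2\pi}}e^{-\log^2(x)/2}\sin(M\log x)/x$ is Gaussian in log-scale (so its Mellin transform consists of Gaussian bumps at $\pm M$, giving exponential-weight integrability for every $\beta$, and $\mathcal{M}[\rho_M](1)=0$ makes the total mass correction automatic), and the base $q(x)\propto(1+x^{\nu})^{-1}$ is heavy-tailed, which yields $p_{0,M}(x)\gtrsim x^{-2\nu+1}$ and allows the $\chi^2$ to be bounded by a sum of an unweighted and a polynomially weighted $L^2$ distance evaluated via a Parseval identity and Stirling bounds on $\Gamma$. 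The resulting separation $e^{-M\pi/\nu}$ with $\beta<\pi/\nu$ and the $\chi^2$ bound $e^{-M\pi(1+2/\nu)}$ reproduce exactly the exponent balance you aimed for. If you replace your compactly supported $\phi$ by a Gaussian (or otherwise work with a perturbation whose log-Fourier transform decays faster than $e^{-\beta|u|}$), re-prove positivity without relying on compact support, and substitute the tail lower bound argument for your localization claim, your proof becomes essentially the paper's.
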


\subsection{Asymptotic normality}

In the case of $K(v)=1_{[-1,1]}(v),$ the estimate $p_{T,n}(x)$ can be written
as%
\begin{align*}
p_{T,n}(x)  &  :=\frac{1}{\sqrt{\pi}}\int_{-1/h_{n}}^{1/h_{n}}\left[ \frac
{1}{n}\sum_{k=1}^{n}|X_{k}|^{2(\gamma+\i v-1)}\right] \frac{x^{-\gamma-\i v}%
}{2^{\gamma+\i v}\Gamma(\gamma-1/2+\i v)}\,dv\\
&  =\frac{1}{n}\sum_{k=1}^{n}Z_{n,k}%
\end{align*}
where
\[
Z_{n,k}:=\frac{1}{\sqrt{\pi}}\int_{-1/h_{n}}^{1/h_{n}}|X_{k}|^{2(\gamma+\i
v-1)}\frac{x^{-\gamma-\i v}}{2^{\gamma+\i v}\Gamma(\gamma-1/2+\i v)}\,dv.
\]
The following theorem holds

\begin{thm}
\label{asymp_norm} Suppose that
\begin{align*}
\left. \frac{d}{du}\left( \Gamma(2\gamma-3/2+\i u)\mathcal{M}[p_{T}%
](2\gamma-1+\i u)\right) \right| _{u=0}\neq0,
\end{align*}
and
\begin{align*}
\int_{-\infty}^{\infty}\left\vert \mathcal{M}[p_{T}](2\gamma-1+\i
u)\right\vert du <\infty,
\end{align*}
then
\begin{align*}
\rho^{-1}_{n}\bigl(p_{T,n}(x)-\mathbb{E}[p_{T,n}(x)]\bigr)\overset
{\mathcal{D}}{\longrightarrow} \mathcal{N}(0,\sigma^{2})
\end{align*}
for some $\sigma^{2}>0,$ where $\rho_{n}=n^{-1/2} h_{n}^{2\left(
\gamma-1\right) }\log^{-2}\left( 1/h_{n}\right) \exp\left[ \pi/h_{n}\right](1+o(1))$
and $h_{n}\asymp c\log^{-1}(n)$ for some $c>0,$ as \(n\to \infty.\)
\end{thm}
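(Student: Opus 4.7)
The strategy is Lyapunov's central limit theorem for triangular arrays. A preliminary observation is that each $Z_{n,k}$ is in fact real-valued: the integrand at $-v$ is the complex conjugate of the integrand at $v$ (since $\overline{|X_k|^{2\i v}}=|X_k|^{-2\i v}$, $\overline{x^{-\gamma-\i v}}=x^{-\gamma+\i v}$, and $\overline{\Gamma(\gamma-1/2+\i v)}=\Gamma(\gamma-1/2-\i v)$). Hence $\rho_n^{-1}(p_{T,n}(x)-\mathbb{E}[p_{T,n}(x)])=(n\rho_n)^{-1}\sum_{k=1}^n(Z_{n,k}-\mathbb{E}Z_{n,k})$ is the normalised sum of a real i.i.d.\ triangular array, and asymptotic normality with variance $\sigma^2>0$ will follow from (i) the variance asymptotic $\Var(Z_{n,k})=n\rho_n^{2}\sigma^2(1+o(1))$, and (ii) a Lyapunov moment condition with some $\delta>0$.

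For (i), Fubini together with the Mellin-product identity $\mathcal{M}[p_{|X|}](z)=\pi^{-1/2}2^{(z-1)/2}\Gamma(z/2)\mathcal{M}[p_T]((z+1)/2)$ collapses $\mathbb{E}Z_{n,k}$ to $(2\pi)^{-1}\int_{-1/h_n}^{1/h_n}x^{-\gamma-\i v}\mathcal{M}[p_T](\gamma+\i v)\,dv\to p_T(x)$, which is bounded and hence negligible when squared. The same identity applied to the second moment yields
\[
\mathbb{E}Z_{n,k}^{\,2}=\frac{x^{-2\gamma}}{4\pi^{3/2}}\int_{-1/h_n}^{1/h_n}\int_{-1/h_n}^{1/h_n}\frac{x^{-\i(v-v')}\,\tilde\Psi(v-v')}{\Gamma(\gamma-1/2+\i v)\,\Gamma(\gamma-1/2-\i v')}\,dv\,dv',
\]
where $\tilde\Psi(u):=\Gamma(2\gamma-3/2+\i u)\mathcal{M}[p_T](2\gamma-1+\i u)$ is precisely the function appearing in the theorem's hypothesis. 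Applying Stirling $\Gamma(\gamma-1/2+\i v)\sim\sqrt{2\pi}|v|^{\gamma-1}e^{-\pi|v|/2}\exp\{\i[v\log|v|-v+(\gamma-1)\tfrac{\pi}{2}\operatorname{sgn}(v)]\}$ exposes an exponentially growing amplitude concentrated near the four corners $(\pm 1/h_n,\pm 1/h_n)$. The local substitutions $v=\pm 1/h_n\mp s$, $v'=\pm 1/h_n\mp s'$ and the first-order Taylor expansion of the gamma phase around $\pm 1/h_n$ (using $\phi'(v)=\log|v|$) reduce each of the two ``same-sign'' corners to a Fourier-type integral $\int\tilde\Psi(\tau)e^{-\pi|\tau|/2}e^{\i\omega\tau}d\tau$ with $\omega=\log(1/h_n)\to\infty$; the ``opposite-sign'' corners contribute only $O(1)$ because Stirling on $\Gamma(2\gamma-3/2+\i u)$ gives $|\tilde\Psi(2/h_n)|\lesssim h_n^{2(1-\gamma)}e^{-\pi/h_n}$, precisely compensating the corner amplitude. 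Iterated integration by parts of the Fourier integral — exploiting that $\tilde\Psi(\cdot)e^{-\pi|\cdot|/2}$ is continuous at $0$ but has jumps in successive derivatives generated by $|\tau|$ — combined with the conjugate-pairing of symmetric corners and the symmetry $\overline{\tilde\Psi(u)}=\tilde\Psi(-u)$, produces an asymptotic expansion in $1/\omega$; the hypothesis $\tilde\Psi'(0)\neq 0$ guarantees that the leading non-cancelling coefficient is strictly positive, yielding $\sigma^2>0$ and the stated form of $\rho_n$.

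For (ii), the crude deterministic bound $|Z_{n,k}|\lesssim |X_k|^{2(\gamma-1)}\int_{-1/h_n}^{1/h_n}|\Gamma(\gamma-1/2+\i v)|^{-1}\,dv\lesssim |X_k|^{2(\gamma-1)}h_n^{\gamma-1}e^{\pi/(2h_n)}$, combined with the finiteness of $\mathbb{E}|X_k|^{2(2+\delta)(\gamma-1)}=\mathcal{M}[p_{|X|}]((2+\delta)(2\gamma-2)+1)$ for sufficiently small $\delta>0$ (valid by continuity because $4\gamma-3$ lies strictly inside the Mellin strip $\mathcal{S}_{|X|}$ under~(\ref{sg})), yields $\mathbb{E}|Z_{n,k}|^{2+\delta}\lesssim h_n^{(2+\delta)(\gamma-1)}e^{(2+\delta)\pi/(2h_n)}$. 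Inserting the variance from (i), the Lyapunov ratio becomes, up to logarithmic factors, a negative power of $n$ under the scaling $h_n\asymp c\log^{-1}(n)$.

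The main obstacle is the asymptotics in step (i). One must (a) peel off the Stirling amplitude and phase rigorously with uniform control of the $O(1/v)$ error on shrinking corner neighbourhoods; (b) localize via a partition of unity and prove that the bulk contribution is negligible; (c) carry out integration by parts in the corner Fourier integral deep enough to identify the first non-cancelling coefficient, where the cancellations are produced by the conjugate-corner pairing together with the symmetry $\overline{\tilde\Psi(u)}=\tilde\Psi(-u)$; it is exactly at this point that the hypothesis $\tilde\Psi'(0)\neq 0$ becomes essential. Tracking the precise constant $\sigma^2>0$ is the most delicate part of the whole argument.
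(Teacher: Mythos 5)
Your proposal follows essentially the same route as the paper's proof: reduce to the i.i.d. array $Z_{n,k}$, compute the second moment as a double integral in which the difference variable carries $\Gamma(2\gamma-3/2+\i(v-u))\mathcal{M}[p_T](2\gamma-1+\i(v-u))$, localize via the Gamma decay to the same-sign corners, use Stirling to peel off the amplitude $h_n^{2(\gamma-1)}e^{\pi/h_n}$ and a phase linear in the difference with frequency $\log(1/h_n)$, expand the resulting one-dimensional oscillatory integral so that the $\log^{-1}$ term cancels (real part/conjugate symmetry) and the $\log^{-2}$ term survives precisely under the derivative hypothesis, and finish with the Lyapunov condition. The only notable difference is cosmetic: you spell out the crude bound $|Z_{n,k}|\lesssim|X_k|^{2(\gamma-1)}h_n^{\gamma-1}e^{\pi/(2h_n)}$ and the moment continuity argument for the Lyapunov step, which the paper merely asserts is easy to verify.
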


\section{Generalised statistical Skorohod embedding problem}

\label{seq: gsse} In this section we generalize the statistical Skorohod
embedding problem to the case of L\'evy processes. In particular, we consider
the following problem.

\begin{prob}
\label{stat_skorohod_gen} Based on i.i.d. sample $X_{1},\ldots, X_{n}$ from
the distribution of $\mu,$ estimate the distribution of the random time
$T\geq0$ independent of a L\'evy process $L$ such that $L_{T}\sim\mu.$
\end{prob}

Note that the situation here is much more difficult than before, since the
L\'{e}vy processes do not have, in general, the scaling property
\eqref{scaling_prop_bm}. Hence the approach based on the Mellin deconvolution
technique can not be applied any longer. Let $(L_{t},\,t\geq0)$ be a L\'{e}vy
process with the triplet $(\mu,\sigma^{2},\nu).$ Define a curve in
$\mathbb{C}$
\[
\ell:=\Bigl\{\mathsf{Re}(\psi(u))+{\i}\,\mathsf{Im}(\psi(u)),\,u\in
\mathbb{R}_{+}\Bigr\},
\]
where $\psi(u)=-t^{-1}\log(\mathbb{E}(\exp(\i uL_{t}))).$ Our approach to
reconstruct the distribution of $T$ is based on the simple identity
\begin{align}
\label{cfx_main}\mathcal{F}[p_{X}](\lambda)=\mathbb{E}[\exp({\i}\lambda
L_{T})]=\mathcal{L}[p_{T}](\psi(\lambda)).
\end{align}
It is well known that the Laplace transform of $\mathcal{L}[p_{T}](u)$ is
analytic in the domain $\bigl\{\mathsf{Re}(u)>0\bigr\}.$ \begin{figure}[h]
\centering
\includegraphics[width=0.65\textwidth]{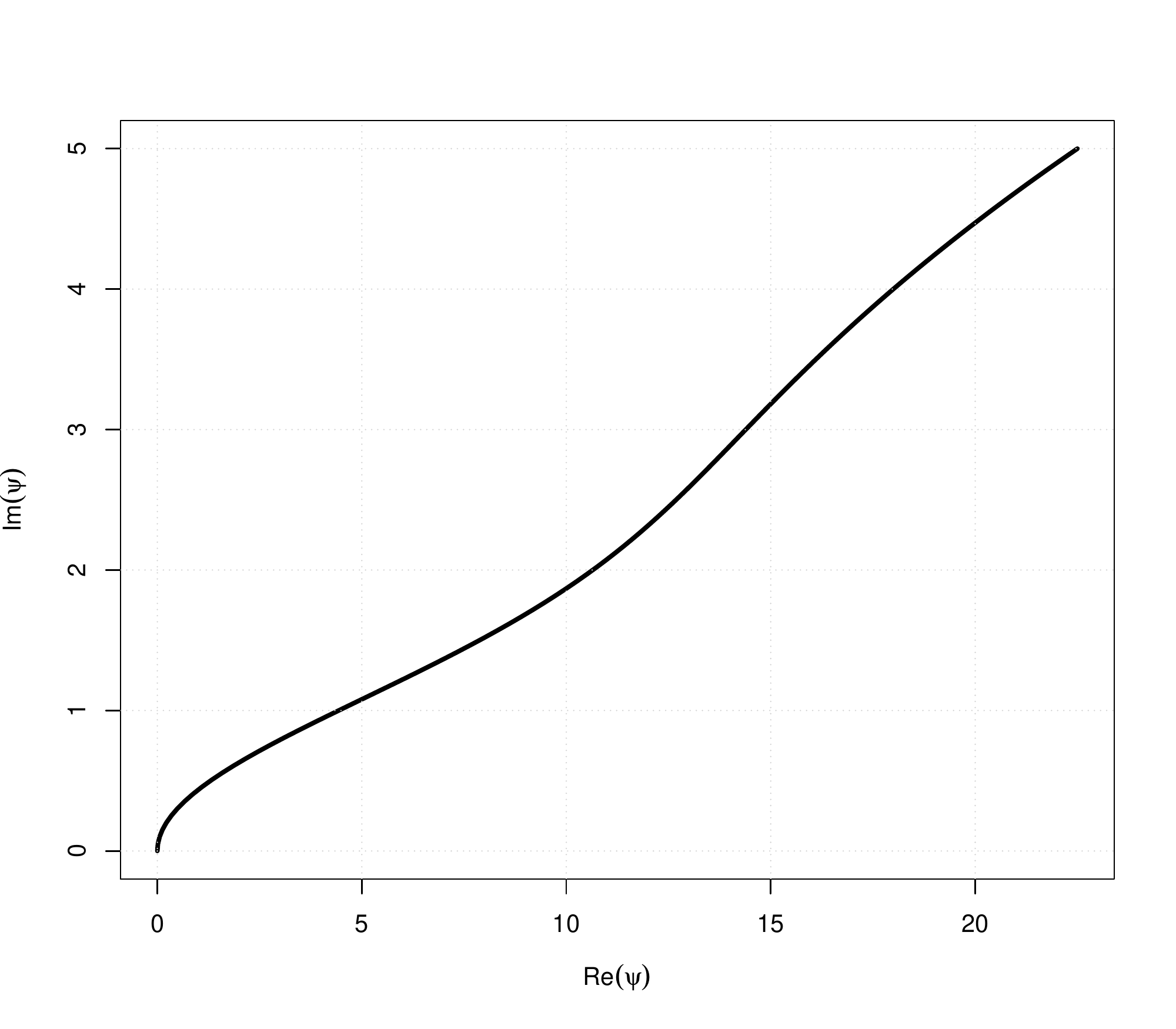}\caption{A typical shape
of the contour $\ell$.}%
\label{fig:ell}%
\end{figure}The following proposition shows that the object $\mathcal{M}%
[\mathcal{L}[p_{T}]](z)$ is well defined and that it can be related to the
Fourier transform of $p_{X},$ which in turn can be estimated from the data.

\begin{prop}
\label{arc} Let us assume that $\mathsf{Re}(\psi(u))\rightarrow\infty$ as
$u\rightarrow\infty$ and that%
\begin{equation}
\label{ImReCond}\frac{\left\vert \mathsf{Im}(\psi(u))\right\vert }%
{\mathsf{Re}(\psi(u))}<A<\infty
\end{equation}
for all $u>0$ and some $A>0.$ Moreover, let $p_{T}$ be (essentially) bounded.
Then, for $0<\Re(z)<1$ it holds that
\[
\mathcal{M}[\mathcal{L}[p_{T}]](z)=\int_{0}^{\infty}u^{z-1}\mathcal{L}%
[p_{T}](u)du=\int_{\ell}w^{z-1}\mathcal{L}[p_{T}](w)dw.
\]

\end{prop}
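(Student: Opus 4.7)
The plan is to establish each of the two equalities separately.

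For the first equality, I will verify that the Mellin integral $\int_0^\infty u^{z-1}\mathcal{L}[p_T](u)\,du$ converges absolutely when $0<\Re(z)<1$. Near $u=0$ the trivial estimate $|\mathcal{L}[p_T](u)|\le 1$ together with $\Re(z)>0$ suffices. Near $u=\infty$, essential boundedness of $p_T$ gives
\[
|\mathcal{L}[p_T](u)|\le \|p_T\|_\infty\int_0^\infty e^{-ux}\,dx=\|p_T\|_\infty/u,
\]
so the integrand behaves like $u^{\Re(z)-2}$, which is integrable precisely because $\Re(z)<1$. The first equality then just expresses the definition of the Mellin transform.

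For the second equality I plan a contour-deformation argument in the right half-plane. Since $|\mathbb{E}e^{\i u L_t}|\le 1$ forces $\Re(\psi(u))\ge 0$, the curve $\ell$ lies in the closed right half-plane, and together with $\psi(0)=0$ and $\Re(\psi(u))\to\infty$ it joins $0$ to $\infty$. On the open right half-plane the integrand $w\mapsto w^{z-1}\mathcal{L}[p_T](w)$ is analytic (using the principal branch of the power). I will close the contour using the real segment $[\epsilon,R]$, an outer arc $A_R$ on $\{|w|=R\}$ from $R$ to a point $\psi(u_R)$ with $|\psi(u_R)|=R$, the portion of $\ell$ from $\psi(u_R)$ back to $\psi(u_\epsilon)$, and an inner arc $A_\epsilon$ on $\{|w|=\epsilon\}$ returning to $\epsilon$. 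Cauchy's theorem then forces the closed-contour integral to vanish, and passing to the limits $\epsilon\downarrow 0$ and $R\uparrow\infty$ identifies the two integrals.

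The main obstacle is controlling the two arc contributions, and this is where hypothesis \eqref{ImReCond} becomes essential. On the inner arc, $|w^{z-1}\mathcal{L}[p_T](w)|\le\epsilon^{\Re(z)-1}$ and $\mathrm{length}(A_\epsilon)=O(\epsilon)$, so the contribution is $O(\epsilon^{\Re(z)})\to 0$ because $\Re(z)>0$. On the outer arc, the estimate $|\mathsf{Im}(\psi)|/\mathsf{Re}(\psi)<A$ confines $\ell$ to the cone $\{|\arg w|\le\arctan A\}$, so $A_R$ can be chosen inside this cone with $\Re(w)\ge R/\sqrt{1+A^2}$. Combined with the uniform bound $|\mathcal{L}[p_T](w)|\le\|p_T\|_\infty/\Re(w)$ on the right half-plane, this yields an integrand of order $R^{\Re(z)-2}$; since the arc length is $O(R)$, the total is $O(R^{\Re(z)-1})\to 0$ because $\Re(z)<1$. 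A minor technical caveat is that $\ell$ need not be simple, but this is bypassed by the convex straight-line homotopy $H(s,u)=s\psi(u)+(1-s)u$, $s\in[0,1]$, which stays in the closed right half-plane and produces equality of the parametric path integrals by analyticity of the integrand.
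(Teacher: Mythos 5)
Your proof is correct and follows essentially the same route as the paper's: deform the contour from the positive real axis to $\ell$ inside the cone $\{|\arg w|\le\arctan A\}$ and kill the arc at radius $R$ via $|\mathcal{L}[p_T](w)|\le \|p_T\|_\infty/\Re(w)$ with $\Re(w)\ge R\cos\theta_{\max}=R/\sqrt{1+A^2}$ and $0<\Re(z)<1$, which is exactly the estimate the paper records. The extra details you add (absolute convergence of the Mellin integral, the small arc near $0$) are fine, and the worry about $\ell$ not being simple is moot anyway, since the open right half-plane is convex and Cauchy's theorem applies there to arbitrary closed contours.
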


\begin{rem}
The condition (\ref{ImReCond}) is fulfilled if, for example, the diffusion
part of $L$ is nonzero or if $\psi$ is real and $\psi(u)\to\infty$ as
$u\to\infty.$
\end{rem}

Under the assumptions of Proposition \ref{arc} we may write,%
\[
\mathcal{M}[\mathcal{L}[p_{T}]](z)=\int_{0}^{\infty}\left[  \psi
(\lambda)\right]  ^{z-1}\mathcal{L}[p_{T}](\psi(\lambda))\psi^{\prime}%
(\lambda)d\lambda,
\]
where $\mathcal{L}[p_{T}](\psi(\lambda))=\mathcal{F}[p_{X}](\lambda)$ due to
\eqref{cfx_main}. On other hand, one may straightforwardly derive,
\[
\mathcal{M}[\mathcal{L}[p_{T}]](z)=\mathcal{M}[p_{T}](1-z)\Gamma
(z),\quad0<\mathsf{Re}(z)<1,
\]
i.e.,
\begin{equation}
\mathcal{M}[p_{T}](z)=\frac{\mathcal{M}[\mathcal{L}[p_{T}]](1-z)}{\Gamma
(1-z)}=\frac{\int_{0}^{\infty}\left[  \psi(\lambda)\right]  ^{-z}%
\mathcal{F}[p_{X}](\lambda)\psi^{\prime}(\lambda)d\lambda}{\Gamma(1-z)}%
,\quad0<\mathsf{Re}(z)<1. \label{FpX}%
\end{equation}
In principle, one can now replace the Fourier transform of $p_{X}$ in
\eqref{FpX} by its empirical counterpart based on the data. However, in this
case we need to regularize the estimate of $\mathcal{M}[p_{T}](z)$ to perform
the inverse Mellin transform. To this end consider the approximation
\[
\mathcal{M}[\mathcal{L}[p_{T}]](z)\approx\frac{1}{n}\sum_{k=1}^{n}\int
_{0}^{A_{n}}\left[  \psi(\lambda)\right]  ^{z-1}e^{{\i}X_{k}\lambda}%
\psi^{\prime}(\lambda)d\lambda=:\frac{1}{n}\sum_{k=1}^{n}\Phi_{n}(z,X_{k})
\]
and define in view of (\ref{FpX}),%
\begin{align}
\label{pTn_gen}p_{T,n}(x):=\frac{1}{2\pi n}\sum_{k=1}^{n}\int_{-U_{n}}^{U_{n}%
}\frac{\Phi_{n}(1-\gamma-{\i}v,X_{k})}{\Gamma(1-\gamma-{\i}v)}x^{-\gamma-\i
v}dv,\text{ \ \ for \ }0<\gamma<1\text{\ }%
\end{align}
where $U_{n},A_{n}\rightarrow\infty$ in a suitable way as $n\rightarrow
\infty.$ Note that in many cases the function $\Phi_{n}$ can be found in
closed form. For example, consider the case of a subordinated stable L\'{e}vy
process with $\psi(\lambda)=|\lambda|^{\alpha}.$ It then holds for
$\Re(z)>0,$
\begin{align*}
\Phi_{n}(z,x)  &  =\int_{0}^{A_{n}}\left[  \psi(\lambda)\right]  ^{z-1}e^{{\i
}x\lambda}\psi^{\prime}(\lambda)d\lambda\\
&  =\alpha\int_{0}^{A_{n}}\lambda^{\alpha(z-1)}e^{{\i}x\lambda}\lambda
^{\alpha-1}\,d\lambda\\
&  =\alpha\int_{0}^{A_{n}}\lambda^{\alpha z-1}e^{{\i}x\lambda}d\lambda\\
&  =\frac{A_{n}^{\alpha z}}{z}F_{1}(\alpha z;1+\alpha z;{\i}A_{n}x),
\end{align*}
where $F_{1}$ is Kummer's function. In the next two theorems we prove a
remarkable result showing that the estimate $p_{T,n}(x)$ converges to $p(x)$
at the same rate (up to a logarithmic factor in the polynomial case) as in the
case of the time-changed Brownian motion.

\begin{thm}
\label{polR} Suppose that $\psi$ satisfies the conditions of Proposition
\ref{arc}, and that moreover $\int_{\{|x|>1\}}|x|\nu(dx)<\infty.$ Furthermore
suppose that there is a $1/2<\gamma<1$\ such that $p_{T}\in\mathcal{C}%
(\beta,\gamma,L)$ (cf. Theorem \ref{sep_conv_rates}) for some $\beta>0,$ and
\begin{equation}
\int_{{1}}^{\infty}\frac{1}{\lambda^{2\gamma-1-\epsilon}}\left\vert
\mathcal{F}[p_{X}](\lambda)\right\vert d\lambda<\infty, \label{ae}%
\end{equation}
for some $\epsilon>0.$  Then under the choice
\begin{equation}
A_{n}=n^{\frac{1}{4\left(  1-\gamma\right)  +2\epsilon}} \label{ch1}%
\end{equation}
and%
\begin{equation}
U_{n}=\frac{\epsilon}{\left(  2-2\gamma+\epsilon\right)  \left(  2\beta
+\pi\right)  }\log n-\frac{2\gamma-1}{2\beta+\pi}\log\log n, \label{ch2}%
\end{equation}
we get
\begin{equation}
\sup_{x\geq0}\sqrt{\mathrm{E}\left[  x^{2\gamma}\left\vert p_{T,n}%
(x)-p_{T}(x)\right\vert ^{2}\right]  }\lesssim n^{-\frac{\beta}{2\beta+\pi
}\frac{\epsilon}{2\left(  1-\gamma\right)  +\epsilon}}\log^{\beta\frac
{2\gamma-1}{2\beta+\pi}}n,\quad n\rightarrow\infty. \label{arr_levy}%
\end{equation}
Thus for $\gamma\to1$ or $\epsilon\to0,$ we recover the rates of
Theorem~\ref{sep_conv_rates} up to a logarithmic factor.
\end{thm}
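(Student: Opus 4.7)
My approach is the standard bias--variance decomposition
\[
\mathrm{E}\bigl[x^{2\gamma}|p_{T,n}(x)-p_T(x)|^2\bigr]\le 2x^{2\gamma}\bigl|\mathrm{E}[p_{T,n}(x)]-p_T(x)\bigr|^2+2x^{2\gamma}\mathrm{Var}[p_{T,n}(x)],
\]
treating the two terms separately and then calibrating $A_n,U_n$ so that the three resulting error pieces are of the same order. The key identity is (\ref{cfx_main})--(\ref{FpX}): by Proposition~\ref{arc}, $\mathrm{E}[\Phi_n(1-\gamma-\i v,X_1)]=\int_0^{A_n}[\psi(\lambda)]^{-\gamma-\i v}\mathcal{L}[p_T](\psi(\lambda))\psi'(\lambda)\,d\lambda$ tends, as $A_n\to\infty$, to $\mathcal{M}[\mathcal{L}[p_T]](1-\gamma-\i v)=\Gamma(1-\gamma-\i v)\mathcal{M}[p_T](\gamma+\i v)$. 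Dividing by $\Gamma(1-\gamma-\i v)$ and comparing with the Mellin inversion formula for $p_T(x)$ produces the clean splitting $\mathrm{E}[p_{T,n}(x)]-p_T(x)=B_v(x)+B_\lambda(x)$, where $B_v$ is the Mellin-inversion tail on $\{|v|>U_n\}$ and $B_\lambda$ collects the contribution of $\lambda\in(A_n,\infty)$ to the inner integral.

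The $v$-bias is dealt with exactly as in Theorem~\ref{sep_conv_rates}: by the very definition of $\mathcal{C}(\beta,\gamma,L)$, $x^\gamma|B_v(x)|\le(2\pi)^{-1}\int_{|v|>U_n}|\mathcal{M}[p_T](\gamma+\i v)|\,dv\le(L/2\pi)e^{-\beta U_n}$. For $B_\lambda$ I would swap the $v$- and $\lambda$-integrals, bound $|[\psi(\lambda)]^{-\gamma-\i v}|\le|\psi(\lambda)|^{-\gamma}e^{|v|\arctan A}$ (this is where (\ref{ImReCond}) enters) and $|\Gamma(1-\gamma-\i v)|^{-1}\lesssim(1+|v|)^{\gamma-1/2}e^{\pi|v|/2}$ via Stirling, and use the weighted hypothesis (\ref{ae}) to dominate $\int_{A_n}^\infty\lambda^{-(2\gamma-1-\epsilon)}|\mathcal{F}[p_X](\lambda)|\,d\lambda$; this delivers a negative power of $A_n$ multiplied by the $v$-integral factor $U_n^{\gamma-1/2}e^{U_n(\pi/2+\arctan A)}$. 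For the variance I exploit $|e^{\i\lambda X_k}|=1$ to extract a \emph{data-free} majorant of each summand $Z_{n,k}$ of exactly the same shape, obtaining
\[
x^{2\gamma}\mathrm{Var}[p_{T,n}(x)]\lesssim\frac{1}{n}\Bigl(\int_0^{A_n}|\psi(\lambda)|^{-\gamma}|\psi'(\lambda)|\,d\lambda\Bigr)^{\!2}\,U_n^{2\gamma-1}e^{U_n(\pi+2\arctan A)},
\]
in which the $\lambda$-integral is polynomial in $A_n$.

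The prescribed choices (\ref{ch1})--(\ref{ch2}) then equilibrate the three pieces: the bias $e^{-\beta U_n}$ and the exponential $e^{\pi U_n}$ from the variance combine into the familiar exponent $\beta/(2\beta+\pi)$; the extra factor $\epsilon/(2(1-\gamma)+\epsilon)$ appears because $U_n$ must simultaneously tame $B_\lambda$, whose decay is paid for by the growing $A_n$; and the $\log\log n$ correction in $U_n$ absorbs the polynomial prefactors $U_n^{\gamma-1/2}$. The technical heart of the argument, and the main obstacle, is the joint control of the $\lambda$-bias and the variance. Proposition~\ref{arc} asserts only \emph{equality} of the contour and real-axis representations of $\mathcal{M}[\mathcal{L}[p_T]]$, not any quantitative truncation estimate, so the tail $\int_{A_n}^\infty$ along $\ell$ must be extracted from the scalar weight in (\ref{ae}) by comparing $|\psi(\lambda)|^{-\gamma}|\psi'(\lambda)|$ with $\lambda^{-(2\gamma-1-\epsilon)}$. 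A related subtlety is the factor $e^{|v|\arctan A}$ produced by the non-trivial argument of $\psi$ along $\ell$: pointwise in $v$ it enlarges the exponential from $1/\Gamma$, and one has to verify that after balancing against the polynomial loss in $A_n$ it nevertheless does not spoil the final exponent claimed in (\ref{arr_levy}).
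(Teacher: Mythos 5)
Your outline reproduces the paper's proof step for step: the paper also writes $\mathrm{E}[p_{T,n}(x)]-p_T(x)$ as the sum of the Mellin-inversion tail on $\{|v|>U_n\}$ (bounded by $L\,|x|^{-\gamma}e^{-\beta U_n}/2\pi$ via the definition of $\mathcal{C}(\beta,\gamma,L)$) and the $\lambda$-truncation term, which it bounds through Lemma~\ref{dif} and \eqref{gamma_asymp} by $|x|^{-\gamma}U_n^{\gamma-1/2}e^{\pi U_n/2}A_n^{-\epsilon}$ after writing $\lambda^{1-2\gamma}=\lambda^{-\epsilon}\lambda^{-(2\gamma-1-\epsilon)}$ and invoking \eqref{ae}; for the variance it uses $\Var\bigl(\int f\bigr)\le\bigl(\int\sqrt{\Var f}\bigr)^2$ together with $\Var[e^{\i\lambda X_1}]\le1$ and Lemma~\ref{dif} to get $|x|^{2\gamma}\Var[p_{T,n}(x)]\lesssim n^{-1}U_n^{2\gamma-1}e^{\pi U_n}A_n^{4(1-\gamma)}$, and then the choices \eqref{ch1}--\eqref{ch2} balance the three pieces exactly as you describe.

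The one substantive divergence is the factor $e^{v\arg\psi(\lambda)}\le e^{U_n\arctan A}$ that you (correctly) keep in $|[\psi(\lambda)]^{-\gamma-\i v}|$ and that the paper silently discards: in both its bias and variance estimates the paper replaces $|[\psi(\lambda)]^{-\gamma-\i v}|$ by $|\psi(\lambda)|^{-\gamma}$ without comment, i.e.\ treats $\arg\psi$ as if it were zero. In your version this factor is not a harmless constant: with $U_n$ prescribed by \eqref{ch2}, $e^{2U_n\arctan A}$ equals, up to logarithmic factors, $n^{2\arctan(A)\,\epsilon/((2-2\gamma+\epsilon)(2\beta+\pi))}$, a positive power of $n$, so your variance (and $\lambda$-bias) majorants no longer balance with $e^{-\beta U_n}$ and yield a strictly worse exponent than \eqref{arr_levy}. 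Hence your closing remark that one ``has to verify'' the factor does not spoil the exponent is precisely the missing step, not a routine check, and as written the proposal does not establish the stated rate. To close it you would need an additional argument, e.g.: in the $\lambda$-bias term $\lambda\ge A_n$, and if the diffusion part is nonzero (or $\psi$ is real) then $\arg\psi(\lambda)=O(1/\lambda)$, so there $e^{|v|\arg\psi(\lambda)}\le e^{O(U_n/A_n)}=1+o(1)$; in the variance, split the $\lambda$-integral at $\lambda\asymp U_n$, where the near piece costs at most $e^{\theta_{\max}U_n}U_n^{2(1-\gamma)}$ and can be absorbed for the relevant (small) $\epsilon$, while on the far piece the factor is again $1+o(1)$. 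Alternatively one restricts to $\psi>0$ (the real, subordinator-type case), which is in effect what the paper's own proof assumes when it drops the factor; note that \eqref{ImReCond} alone does not make $e^{U_n\arctan A}$ negligible.
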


\begin{rem}
\label{rem_four_cond} Since
\begin{align*}
\int_{1}^{\infty}\frac{1}{\lambda^{2\gamma-1-\epsilon}}\left\vert
\mathcal{F}[p_{X}](\lambda)\right\vert d\lambda=\int_{1}^{\infty}\frac
{1}{\lambda^{2\gamma-1-\epsilon}}\left\vert \mathcal{L}[p_{T}](\psi
(\lambda))\right\vert d\lambda,
\end{align*}
the condition \eqref{ae} is, for example, fulfilled for some $\epsilon>0$ if
$\Re[\psi(\lambda)]\gtrsim\lambda$ for $\lambda\to+\infty$ and $p_{T}$ is of
bounded variation with $p_{T}(0)<\infty.$
\end{rem}

In the case $p_{T}\in\mathcal{D}(\beta,\gamma,L),$ we get exactly the same
logarithmic rates as in Theorem~\ref{sep_log_rates}.

\begin{thm}
\label{logR} Suppose that $\psi$ and $\gamma$ are as in Theorem \ref{polR},
and that now $p_{T}\in\mathcal{D}(\beta,\gamma,L)$ (cf. Theorem
\ref{sep_log_rates}) for some $\beta>0.$ Further suppose that (\ref{ae})
holds. Then under the choice
\begin{equation}
A_{n}=n^{\frac{1}{4\left(  1-\gamma\right)  +2\epsilon}} \label{ch1l}%
\end{equation}
(hence the same as in Theorem \ref{polR}) and
\begin{equation}
U_{n}=\frac{\epsilon}{\pi\left(  2-2\gamma+\epsilon\right)  }\log
n-\frac{2\beta+2\gamma-1}{\pi}\log\log n, \label{ch2l}%
\end{equation}
we get
\[
\sup_{x\geq0}\sqrt{\mathrm{E}\left[  x^{2\gamma}\left\vert p_{T,n}%
(x)-p_{T}(x)\right\vert ^{2}\right]  }\lesssim\log^{-\beta}(n),\quad
n\rightarrow\infty.
\]

\end{thm}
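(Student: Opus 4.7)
The plan is to follow the same template as the proof of Theorem~\ref{polR}, replacing only the step that bounded the Mellin truncation bias under the assumption $p_T\in\mathcal{C}(\beta,\gamma,L)$ by the analogous bound for $p_T\in\mathcal{D}(\beta,\gamma,L)$. First I would decompose
\[
x^{\gamma}\bigl(p_{T,n}(x)-p_{T}(x)\bigr)=R_{\mathrm{Mel}}(x)+R_{\lambda}(x)+R_{\mathrm{stoch}}(x),
\]
where the three terms correspond respectively to the truncation of the $v$-integral in the inverse Mellin formula at $|v|=U_n$, the truncation of the $\lambda$-integral defining $\Phi_n$ at $A_n$, and the replacement of $\mathcal{F}[p_X](\lambda)$ by its empirical counterpart. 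The bounds on $R_\lambda$ and $R_{\mathrm{stoch}}$ established in the proof of Theorem~\ref{polR} use only assumption~(\ref{ae}), the asymptotics $|\Gamma(1-\gamma-\i v)|^{-1}\asymp |v|^{1/2-\gamma}e^{\pi|v|/2}$ as $|v|\to\infty$, and the growth of $A_n$ and $U_n$; none of these depends on the tail behaviour of $\mathcal{M}[p_T]$, so they carry over verbatim, yielding stochastic variance of order $n^{-1}U_n^{2(1-\gamma)}e^{\pi U_n}$ and deterministic $\lambda$-truncation error of order $A_n^{-\epsilon}$ times a polynomial factor in $U_n$ and $e^{\pi U_n/2}$.

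For the new ingredient $R_{\mathrm{Mel}}$, I would apply the Mellin inversion formula (\ref{inverse_mellin}) and invoke the definition of $\mathcal{D}(\beta,\gamma,L)$ to get
\[
|R_{\mathrm{Mel}}(x)|\leq\frac{1}{2\pi}\int_{|v|>U_n}\bigl|\mathcal{M}[p_T](\gamma+\i v)\bigr|\,dv\leq\frac{1}{2\pi U_n^{\beta}}\int_{-\infty}^{\infty}\bigl|\mathcal{M}[p_T](\gamma+\i v)\bigr|(1+|v|^\beta)\,dv\leq\frac{L}{2\pi U_n^{\beta}},
\]
uniformly in $x\geq 0$. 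This is the $\mathcal{D}$-class substitute for the exponentially small bound $e^{-\beta U_n}$ used in Theorem~\ref{polR}; it decays only polynomially in $U_n$, and hence only logarithmically in $n$ as soon as $U_n\asymp\log n$, which will be the bottleneck dictating the final rate.

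It then remains to check that the specific choices (\ref{ch1l})--(\ref{ch2l}) make $R_\lambda$ and $R_{\mathrm{stoch}}$ of smaller order than $\log^{-\beta} n$. Substituting $U_n=\frac{\epsilon}{\pi(2-2\gamma+\epsilon)}\log n-\frac{2\beta+2\gamma-1}{\pi}\log\log n$ one computes $e^{\pi U_n}=n^{\epsilon/(2-2\gamma+\epsilon)}(\log n)^{-(2\beta+2\gamma-1)}$, so that the variance contribution is of order
\[
\frac{1}{n}U_n^{2(1-\gamma)}e^{\pi U_n}=O\!\left(n^{-\frac{2(1-\gamma)}{2-2\gamma+\epsilon}}(\log n)^{1-2\beta}\right),
\]
which is polynomial in $n$ and therefore negligible against $\log^{-\beta}n$. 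A parallel substitution, using $A_n^{-\epsilon}=n^{-\epsilon/(4(1-\gamma)+2\epsilon)}$, shows the $\lambda$-truncation contribution is likewise polynomial in $n$. Consequently $R_{\mathrm{Mel}}$ dominates and yields the announced rate.

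The main obstacle is really only the bookkeeping in this last step: the logarithmic correction $-\frac{2\beta+2\gamma-1}{\pi}\log\log n$ in $U_n$ must be exactly calibrated so that $e^{\pi U_n}/n$ beats the $U_n^{2(1-\gamma)}$ variance factor by a polynomial margin, thereby guaranteeing that the polynomially vanishing stochastic and $\lambda$-truncation errors cannot overtake the purely logarithmic Mellin bias. Everything else is a direct reuse of the bounds derived in the proof of Theorem~\ref{polR}, together with the elementary tail bound for the class $\mathcal{D}(\beta,\gamma,L)$ displayed above.
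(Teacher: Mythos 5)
Your overall strategy is the intended one: reuse the error decomposition from the proof of Theorem~\ref{polR} and replace only the Mellin-truncation bias bound, which for $p_T\in\mathcal{D}(\beta,\gamma,L)$ becomes $\frac{1}{2\pi}\int_{|v|>U_n}|\mathcal{M}[p_T](\gamma+\i v)|\,dv\leq \frac{L}{2\pi}U_n^{-\beta}$; that part is correct. However, your bookkeeping for the other two terms contains a genuine error. The variance bound you quote, $n^{-1}U_n^{2(1-\gamma)}e^{\pi U_n}$, is the one from the Brownian-motion case (Theorem~\ref{sep_conv_rates}); in the generalized setting the proof of Theorem~\ref{polR} gives $x^{2\gamma}\mathrm{Var}(p_{T,n}(x))\lesssim n^{-1}U_n^{2\gamma-1}e^{\pi U_n}A_n^{4(1-\gamma)}$, i.e.\ it carries the extra factor $A_n^{4(1-\gamma)}$, which grows polynomially in $n$, and the power of $U_n$ is $2\gamma-1$, not $2(1-\gamma)$ (since $|\Gamma(1-\gamma-\i v)|^{-1}\asymp|v|^{\gamma-1/2}e^{\pi|v|/2}$). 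With the prescribed choices, writing $a=2(1-\gamma)+\epsilon$, one has $A_n^{4(1-\gamma)}=n^{2(1-\gamma)/a}$ and $e^{\pi U_n}=n^{\epsilon/a}(\log n)^{-(2\beta+2\gamma-1)}$, so the powers of $n$ cancel exactly ($-1+2(1-\gamma)/a+\epsilon/a=0$) and the variance is of order $(\log n)^{2\gamma-1-(2\beta+2\gamma-1)}=(\log n)^{-2\beta}$. Likewise the $\lambda$-truncation term $U_n^{\gamma-1/2}e^{\pi U_n/2}A_n^{-\epsilon}$ has $e^{\pi U_n/2}A_n^{-\epsilon}=n^{\epsilon/(2a)-\epsilon/(2a)}(\log n)^{-(2\beta+2\gamma-1)/2}$, hence is of order $(\log n)^{-\beta}$, not polynomially small.

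So your claims that the stochastic and $\lambda$-truncation contributions are ``polynomial in $n$ and therefore negligible'' and that $R_{\mathrm{Mel}}$ dominates are false: with (\ref{ch1l})--(\ref{ch2l}) all three terms are of the same order $\log^{-\beta}(n)$, and this is precisely the point of the calibration. The $\log\log n$ correction $-\frac{2\beta+2\gamma-1}{\pi}\log\log n$ in $U_n$ is not there to create a ``polynomial margin''; it is needed to offset the factors $U_n^{2\gamma-1}$ (in the variance) and $U_n^{\gamma-1/2}$ (in the $\lambda$-truncation error) so that these terms are reduced to $\lesssim\log^{-\beta}(n)$, matching the Mellin bias. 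The final rate you announce is still correct, but your verification step, as written, uses the wrong variance bound and misidentifies where the rate comes from; carried out with the actual bounds from Theorem~\ref{polR}, the computation closes exactly as above.
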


\paragraph{Discussion}

The rates in Theorem~\ref{polR} and Theorem~\ref{logR} are optimal in minimax
sense, since they are basically coincides (up to a logarithmic factor) with
the rates in Theorem~\ref{sep_conv_rates} and Theorem~\ref{sep_log_rates},
respectively. As can be seen from the proof of Theorem~\ref{low_bound} and
Remark~\ref{rem_four_cond}, the lower bonds continue to hold true under the
additional assumption \eqref{ae}. Let us also stress that the class
$\mathcal{C}(\beta,\gamma,L)$ is quite large and contains the well known
families of distributions such as Gamma, Beta and Weibull families. It follows
from Theorem~\ref{polR} that for all these families our estimator $p_{n,T}$
converges at a polynomial rate.

\section{Applications}

\subsection{Estimation of the variance-mean mixture models}

The variance-mean mixture of the normal distribution is defined as
\begin{align*}
p(x)=\int_{0}^{\infty}(2\pi\sigma^{2} u)^{-1/2}\exp(-(x-\mu u)^{2}%
/(2\sigma^{2} u))\, g(u) du,
\end{align*}
where $g(u)$ is a mixing density on $\mathbb{R}_{+}.$ The variance-mean
mixture models play an important role in both the theory and the practice of
statistics. In particular, such mixtures appear as limit distributions in
asymptotic theory for dependent random variables and they are useful for
modeling data stemming from heavy-tailed and skewed distributions, see, e.g.
\cite{barndorff1982normal} and \cite{bingham2002semi}. As can be easily seen,
the variance-mean mixture distribution $p$ coincides with the distribution of
the random variable $\sigma W_{T}+\mu T, $ where $T$ is the random variable
with density $g,$ which is independent of $W.$ The class of variance-mean
mixture models is rather large. For example, the class of the normal variance
mixture distributions ($\mu=0$) can be described as follows: $p$ is the
density of a normal variance mixture (equivalently $p$ is the density of
$W_{T}$) if and only if $\mathcal{F}[p](\sqrt{u})$ is a completely monotone
function in $u.$ The problem of statistical inference for variance-mean
mixture models has been already considered in the literature. For example,
Korsholm, \cite{korsholm2000semiparametric} proved the consistency of the
non-parametric maximum likelihood estimator for the parameters $\sigma$ and
$\mu,$ $g$ being treated as an infinite dimensional nuisance parameter. In
Zhang \cite{zhang1990fourier} the problem of estimating the mixing density in
location (mean) mixtures was studied.  To the best of our knowledge, we here
address, for the first time, the problem of non-parametric inference for the
mixing density $g$ in full generality and derive the minimax convergence
rates. In fact, Theorem~\ref{polR} and Theorem~\ref{logR} directly apply not
only to normal variance-mean mixture models, but also to stable variance-mean mixtures.

\subsection{Estimation of time-changed L\'evy models}

Let $L = (L_{t})_{t\geq0} $ be a one-dimensional L\'evy process and let
$\mathcal{T} = (\mathcal{T}(s))_{s\geq0} $ be a non-negative, non-decreasing
stochastic process independent of $X $ with $\mathcal{T}(0)=0 $. A
time-changed L\'evy process $Y = (Y_{s})_{s\geq0} $ is then defined as $Y_{s}
= X_{\mathcal{T}(s)}. $ The process $\mathcal{T} $ is usually referred to as
time change or subordinator. Consider the problem of statistical inference on
the distribution of the time change $\mathcal{T} $ based on the low-frequency
observations of the time-changed L\'evy process $X_{t}=L_{\mathcal{T}(t)}. $
Suppose that $n$ observations of the L\'evy process $L_{t} $ at times
$t_{j}=j\Delta, $ $j=0,\ldots, n, $ are available. If the sequence
$\mathcal{T}(t_{j})-\mathcal{T}(t_{j-1}), $ $j=1,\ldots, n, $ is strictly
stationary with the invariant stationary distribution $\pi, $ then for any
bounded ``test function'' $f,$
\begin{align}
\label{CONVERG}\frac{1}{n}\sum_{j=1}^{n}f\left(  L_{\mathcal{T}(t_{j}%
)}-L_{\mathcal{T}(t_{j-1})}\right)  \to\mathbb{E}_{\pi}[f(L_{\mathcal{T}%
(\Delta)})], \quad n\to\infty,
\end{align}
The limiting expectation in \eqref{CONVERG} is then given by
\begin{align*}
\mathbb{E}_{\pi}[f(L_{\mathcal{T}(\Delta)})]=\int_{0}^{\infty} \mathbb{E}%
[f(L_{s})]\, \pi(ds).
\end{align*}
Taking $f(z) = f_{u}(z)=\exp(\i u^{\top}z), $ $u\in\mathbb{R}^{d}, $ we arrive
at the the following representation for the c.f. of $L_{\mathcal{T}(s)} $:
\begin{align}
\label{EstEq}\mathbb{E}\left[  \exp\left(  \i u L_{\mathcal{T}(\Delta
)}\right)  \right]  =\int_{0}^{\infty} \exp(t\psi(u))\, \pi(dt)=\mathcal{L}%
_{\pi}(\psi(u)),
\end{align}
where $\psi(u):=-t^{-1}\log(\phi_{t}(u))$ with $\phi_{t}(u)=\mathbb{E}\exp(\i
u^{\top}L_{t})$ being the characteristic exponent of the L\'evy process $L $
and $\mathcal{L}_{\pi} $ being the Laplace transform of $\pi. $ Suppose we
want to estimate the invariant measure $\pi$ (or its density) from the
discrete time observations of $L_{\mathcal{T}},$ then we are in the setting of
the generalized statistical Skorohod embedding with the only difference that
the elements of the sample $L_{\mathcal{T}(t_{1})}-L_{\mathcal{T}(t_{0}%
)},\ldots, L_{\mathcal{T}(t_{n})}-L_{\mathcal{T}(t_{n-1})}$ are not
necessarily independent. However, under appropriate mixing properties of the
sequence $\mathcal{T}(t_{j})-\mathcal{T}(t_{j-1}), $ $j=1,\ldots, n, $ one can
easily generalize the results of Section~\ref{seq: gsse} to the case of
dependent data (see, e.g. \cite{belomestny2011statistical} for similar
results). The problem of estimating the parameters of a L\'evy process observed at low frequency was considered in Neumann and Rei{\ss}, \cite{neumann2009nonparametric} and Chen et al, \cite{chen2010nonparametric}.  Let us note that the statistical inference for time-changed L\'evy
processes based on high-frequency observations of $Y$ has been the subject of
many studies, see, e.g. Bull, \cite{bull2013estimating} and Todorov and
Tauchen, \cite{todorov2012realized} and the references therein.

\section{Numerical examples}

Barndorff-Nielsen et al. \cite{barndorff1982normal} consider a class of
variance-mean mixtures of normal distributions which they call generalized
hyperbolic distributions. The univariate and symmetric members of this family
appear as normal scale mixtures whose mixing distribution is the generalized
inverse Gaussian distribution with density
\begin{align}
\label{invGauss}p_{T}(v)=\frac{(\varkappa/\delta)^{\lambda}}{2K_{\lambda
}(\delta\varkappa)} v^{\lambda-1} \exp\left(  -\frac{1}{2}\left(
\varkappa^{2} v+\frac{\delta^{2}}{v}\right)  \right)  , \quad v>0,
\end{align}
for some $\varkappa,$ $\delta\geq0$ and $\lambda>0,$ where $K$ is a modified
Bessel function. The resulting normal scale mixture has probability density
function
\begin{align*}
p_{X}(x)=\frac{\varkappa^{1/2}}{(2\pi)^{1/2}\delta^{\lambda}} K_{\lambda
}(\delta\varkappa) (\delta^{2}+x^{2})^{\frac{1}{2}\left(  \lambda-\frac{1}%
{2}\right)  }K_{\lambda-\frac{1}{2}}\bigl(\varkappa(\delta^{2}+x^{2}%
)^{1/2}\bigr).
\end{align*}
Let us start with a simple example, Gamma density $p_{T}(x)=x\exp(-x),$
$x\geq0,$ which is a special case of \eqref{invGauss} for $\delta=0,$
$\lambda=2$ and $\varkappa=\sqrt{2}.$ We simulate a sample of size $n$ from
the distribution of $X,$ and construct the estimate \eqref{p_Tn_bm} with the
bandwidth $h_{n}$ given (up to a constant not depending on $n$) by \eqref{hn}
and $\gamma=0.8.$ In Figure~\ref{fig: gamma_bm} (left), one can see $50$
estimated densities based on $50$ independent samples from $W_{T}$ of size
$n=1000,$ together with $p_{T}$ in red. Next we estimate the distribution of
the loss $\sup_{x\in[0,10]}\bigl\{ |p_{T,n}(x)-p_{T}(x)|\bigr\}$ based on
$100$ independent repetitions of the estimation procedure. The corresponding
box plots for different $n$ are shown in Figure~\ref{fig: gamma_bm} (right).
\begin{figure}[t]
\centering
\includegraphics[width=0.45\linewidth]{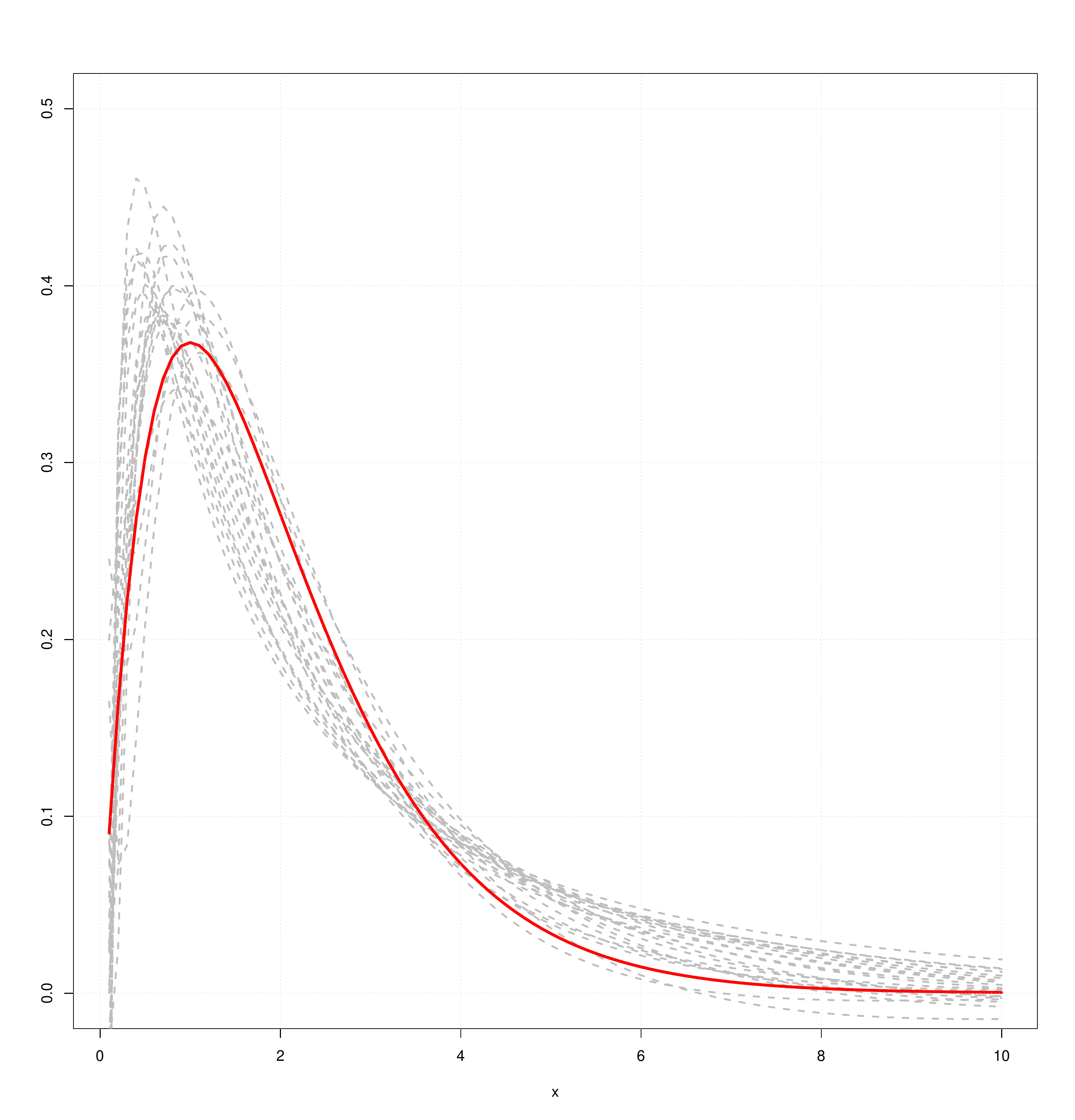}
~\includegraphics[width=0.45\linewidth]{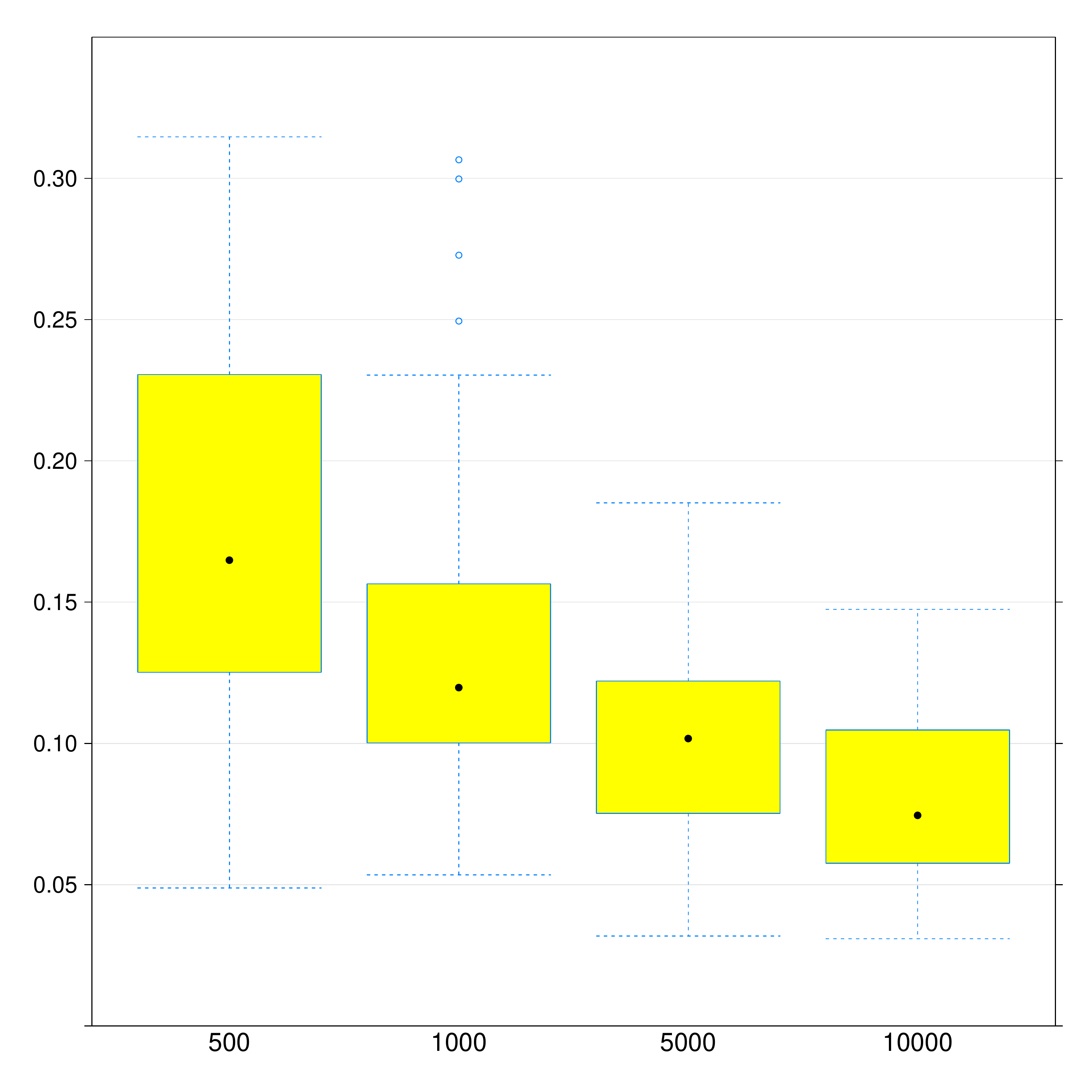}\caption{Left: the
Gamma density (red) and its $50$ estimates (grey) for the sample size
$n=1000$. Right: the box plots of the loss $\sup_{x\in[0,10]}\bigl\{ |p_{T,n}%
(x)-p_{T}(x)|\bigr\}$ for different sample sizes.}%
\label{fig: gamma_bm}%
\end{figure}

Let us now turn to a more interesting example of variance-mean mixtures. We
take $X=T+W_{T}$ and choose $T$ to follow a Gamma distribution with the
density $p_{T}(x)=x\exp(-x),$ $x\geq0.$ The estimate \eqref{pTn_gen} is
constructed as follows. First note that $\psi(\lambda)=-\i\lambda+\lambda
^{2}/2.$ In order to numerically compute the function $\Phi_{n}(1-z,X_{k})$
for $z=\gamma+\i v$ with $\gamma<1,$ we use the decomposition
\begin{align}
\label{phin_approx}\frac{1}{n}\sum_{k=1}^{n}\Phi_{n}(1-z,X_{k}) & =\int
_{0}^{A_{n}}\left[  \psi(\lambda)\right]  ^{-z}[\phi_{n}(\lambda
)-e^{-m_{n}\psi(\lambda)}]\psi^{\prime}(\lambda)\,d\lambda\\
& +m_{n}^{z-1}\Gamma(1-z)+O\bigl(m_{n}^{-(1-\gamma)}\exp(-m_{n}A_{n}%
^{2}/2)\bigr),\nonumber
\end{align}
where $\phi_{n}(\lambda)=\frac{1}{n} \sum_{k=1}^{n} e^{\i\lambda X_{k}}$ is
the empirical characteristic function and $m_{n}=\frac{1}{n}\sum_{k=1}^{n}
X_{k}\to2.$ This decomposition follows from a Cauchy argument similar as in
the proof of Proposition~\ref{arc} and is quite useful to reduce the cost of
computing the integral in \eqref{phin_approx}, since the integral on the
r.h.s. of \eqref{phin_approx} is much easier to compute due to the asymptotic
relation $\phi_{n}(\lambda)-e^{-m_{n}\psi(\lambda)}=O(\lambda^{2}),$
$\lambda\to0.$ Next we take $\gamma=0.7,$ $A_{n}$ and $h_{n}$ as in
Theorem~\ref{polR} with $\varepsilon=0.5$ and $\beta=\pi/2$ (see
Example~\ref{exmp_gamma}). Figure~\ref{fig: gamma_bmd} shows the performance
of the estimate defined in \eqref{pTn_gen}: on the left-hand side $20$
independent realizations of the estimate $p_{T,n}$ for $n=1000$ are shown
together with the true density $p_{T}.$ The box plots of the loss $\sup
_{x\in[0,10]}\bigl\{ |p_{T,n}(x)-p_{T}(x)|\bigr\}$ based on $100$ runs of the
algorithm are depicted on the right-hand side of Figure~\ref{fig: gamma_bmd}.
By comparing the right-hand sides of Figure~\ref{fig: gamma_bm} and
Figure~\ref{fig: gamma_bmd}, we observe that the performances of the estimates
\eqref{pTn_gen} and \eqref{p_Tn_bm} are similar, although the estimate
\eqref{p_Tn_bm} seem to have higher variance. This supports the claim of
Theorem~\ref{polR} about the same convergence rates in statistical Skorohod
embedding and generalized statistical Skorohod embedding problems, given that
$p_{T}\in\mathcal{C}(\beta,\gamma,L).$ \begin{figure}[tbh]
\centering
\includegraphics[width=0.45\linewidth]{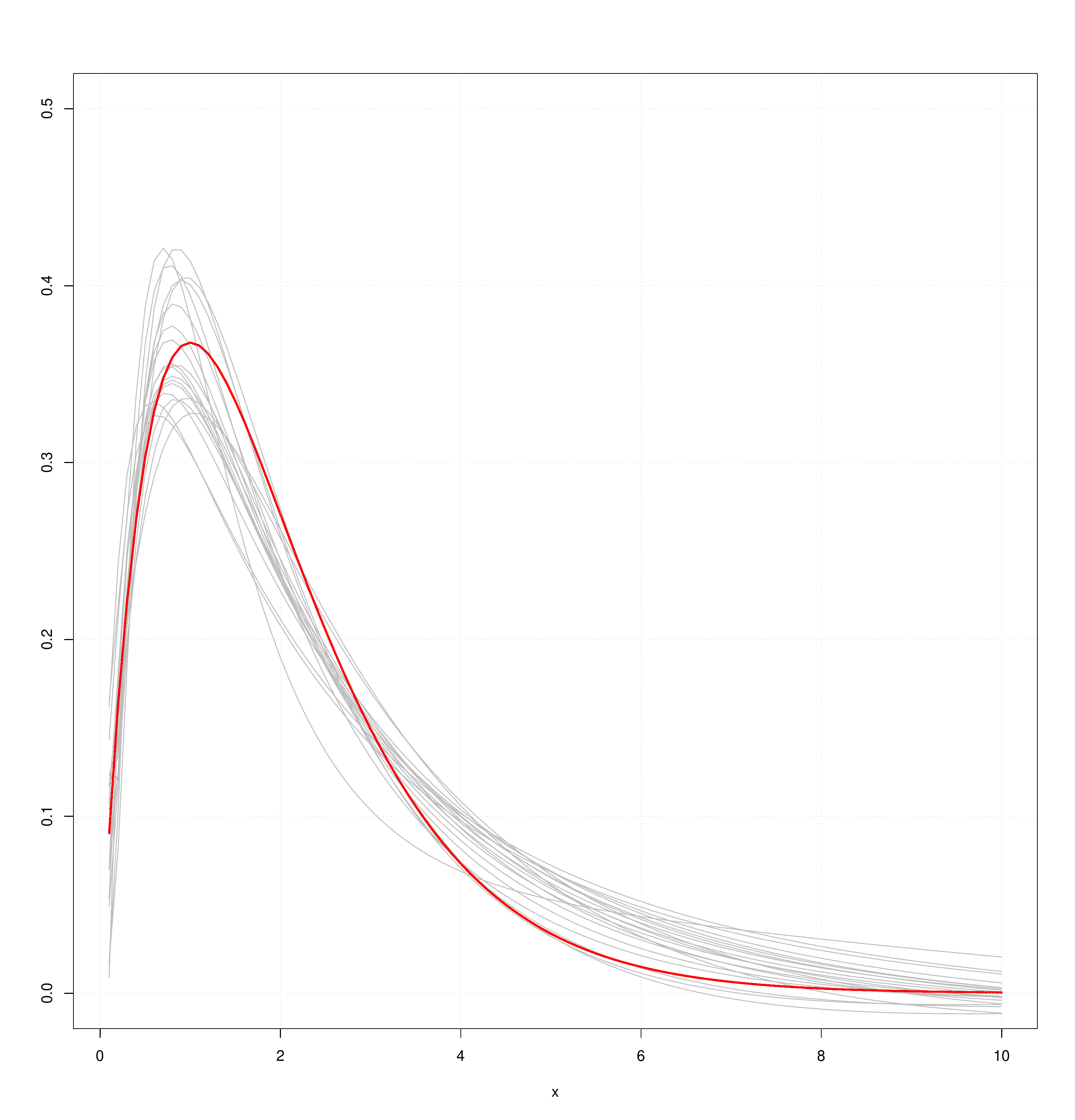}
~\includegraphics[width=0.45\linewidth]{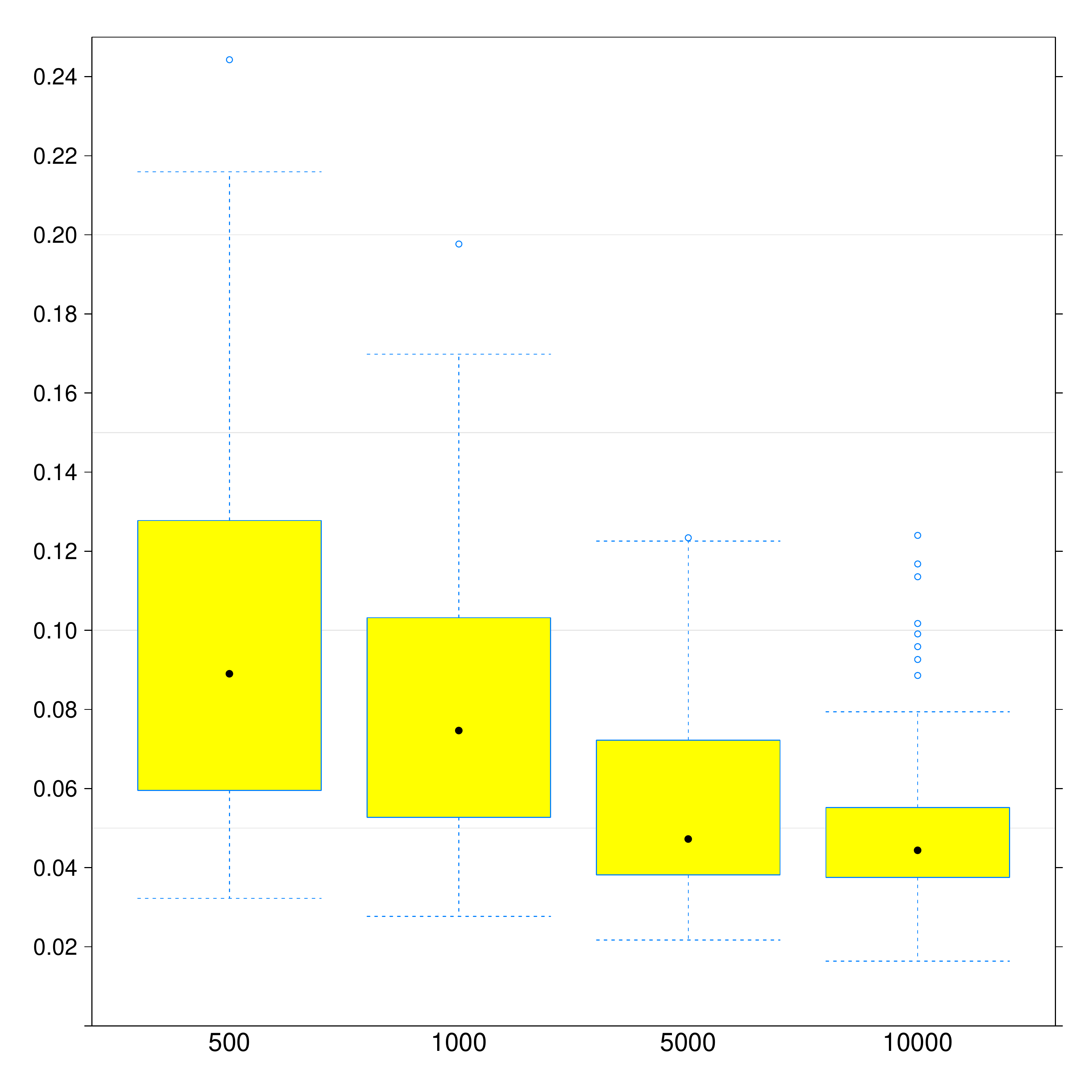}\caption{Left:
the Gamma density (red) and its $20$ estimates (grey) for the sample size
$n=5000$. Right: the box plots of the loss $\sup_{x\in[0,10]}\bigl\{ |p_{T,n}%
(x)-p_{T}(x)|\bigr\}$ for different sample sizes.}%
\label{fig: gamma_bmd}%
\end{figure}

\section{Proofs}

\subsection{Proof of Theorem~\ref{sep_conv_rates}}

First let us estimate the bias of $p_{T,n}.$ We have
\begin{align*}
\mathbb{E}[p_{T,n}(x)]  &  =\frac{1}{\sqrt{\pi}}\int_{-\infty}^{\infty
}x^{-\gamma-\i v}K(vh_{n})\frac{\mathcal{M}[p_{|X|}](2(\gamma+\i
v)-1)}{2^{\gamma+\i v}\Gamma(\gamma-1/2+\i v)}\,dv\\
&  =\frac{1}{2\pi}\int_{-1/h_{n}}^{1/h_{n}}x^{-\gamma-\i v}\mathcal{M}%
[p_{T}](\gamma+\i v)\,dv.
\end{align*}
Hence
\[
p_{T}(x)-\mathbb{E}[p_{T,n}(x)]=\frac{1}{2\pi}\int_{\{\left\vert v\right\vert
\geq1/h_{n}\}}\mathcal{M}[p_{T}](\gamma+\i v)x^{-\gamma-\i v}dv
\]
and we then have the estimate,
\begin{align}
\sup_{x\geq0}\bigl\{x^{\gamma}|\mathbb{E}[p_{T,n}(x)]-p_{T}(x)|\bigr\}  &
\leq\frac{1}{2\pi}\int_{\{\left\vert v\right\vert \geq1/h_{n}\}}\left\vert
\mathcal{M}[p_{T}](\gamma+\i v)\right\vert dv\nonumber\\
&  \leq\frac{e^{-\beta/h_{n}}}{2\pi}\int_{\{\left\vert v\right\vert
\geq1/h_{n}\} }e^{-\beta\left\vert v\right\vert }\left\vert \mathcal{M}%
[p_{T}](\gamma+\i v)\right\vert e^{\beta\left\vert v\right\vert }
dv\nonumber\\
&  \leq L\,\frac{e^{-\beta/h_{n}}}{2\pi}. \label{bias}%
\end{align}
As to the variance, by the simple inequality $\operatorname{Var}\left(  \int
f_{t}dt\right)  \leq\left(  \int\sqrt{\operatorname{Var}[f_{t}]} dt\right)
^{2},$ which holds for any random function $f_{t}$ with $\int\mathbb{E}%
[f^{2}_{t}] dt<\infty,$ we get
\begin{align}
\operatorname{Var}[x^{\gamma}p_{T,n}(x)]  &  =\operatorname{Var}\left[
\frac{1}{\sqrt{\pi}}\int_{-\infty}^{\infty}x^{-\i v}K(vh_{n})\frac
{\mathcal{M}_{n}[p_{|X|}](2(\gamma+\i v)-1)}{2^{\gamma+iv}\Gamma(\gamma-1/2+\i
v)}\,dv\right] \nonumber\\
&  \leq\frac{1}{\pi2^{2\gamma}}\left[  \int_{-1/h_{n}}^{1/h_{n}}\frac
{\sqrt{\operatorname{Var}\left(  \mathcal{M}_{n}[p_{|X|}](2(\gamma+\i
v)-1)\right)  }}{\left\vert \Gamma(\gamma-1/2+\i v)\right\vert }dv\right]
^{2}\,\nonumber\\
&  \leq\frac{1}{2n\pi}\left[  \int_{-1/h_{n}}^{1/h_{n}}\frac{\sqrt
{\operatorname{Var}\bigl(|X|^{2(\gamma+\i v-1)}\bigr)}}{\left\vert
\Gamma(\gamma-1/2+\i v)\right\vert }dv\right]  ^{2}\,\nonumber\\
&  \leq\frac{1}{2n\pi}\left[  \int_{-1/h_{n}}^{1/h_{n}}\frac{\sqrt
{\mathbb{E}\left[  |W_{T}|^{4(\gamma-1)}\right]  }}{\left\vert \Gamma
(\gamma-1/2+\i v)\right\vert }\, dv\right]  ^{2}. \label{vr}%
\end{align}
Note that
\begin{align*}
\mathbb{E}\left[  |W_{T}|^{4(\gamma-1)}\right]   &  =\int_{0}^{\infty
}\mathbb{E}\left[  |W_{t}|^{4(\gamma-1)}\right]  p_{T}(t)\, dt\\
&  =\mathbb{E}\left[  |W_{1}|^{4(\gamma-1)}\right]  \int_{0}^{\infty
}t^{2(\gamma-1)}p_{T}(t)\,dt\\
&  =:C_{2}(\gamma)<\infty,
\end{align*}
due to (\ref{sg}).  We obtain from (\ref{vr}) due to
Corollary~\ref{cor_integ_gamma} (see Appendix) and by taking into account
(\ref{sg}),%
\[
\operatorname{Var}[x^{\gamma}p_{T,n}(x)]\leq\frac{C_{2}(\gamma)}{2n\pi}%
C_{3}h_{n}^{2\left(  \gamma-1\right)  }e^{\pi/h_{n}}=\frac{C_{3}(\gamma)}%
{n}h_{n}^{2\left(  \gamma-1\right)  }e^{\pi/h_{n}}.
\]
and so (\ref{ms}) follows with $C_{\gamma,L}=\max(C_{3}(\gamma),\frac{L^{2}%
}{4\pi^{2}}).$ Finally, by plugging (\ref{hn}) into (\ref{ms}) we get
(\ref{rates_bm_pol}) and the proof is finished.

\subsection{Proof of Theorem ~\ref{sep_log_rates}}

The proof is analog to the one of Theorem~\ref{sep_conv_rates} , the only
difference is the bias estimate (\ref{bias}) that now becomes
\[
\sup_{x\geq0}\bigl\{x^{\gamma}|\mathbb{E}[p_{T,n}(x)]-p_{T}(x)|\bigr\}\leq
\frac{L}{2\pi}h_{n}^{\beta},
\]
which gives (\ref{ms1}) with a constant $D_{\gamma,L}=$ $\max(C_{3}%
(\gamma),\frac{L^{2}}{4\pi^{2}})$ again. Next with the choice (\ref{hn1}) we
obtain from (\ref{ms1}) the logarithmic rate (\ref{arr1}).

\subsection{Proof of Theorem~\ref{low_bound}}

Our construction relies on the following basic result (see
\cite{tsybakov2009introduction} for the proof).

\begin{thm}
\label{ThmLowerBound} Suppose that for some $\varepsilon>0$ and $n\in
\mathbb{N}$ there are two densities $p_{0,n},p_{1,n}\in\mathcal{G}$ such that
\[
d(p_{0,n},p_{1,n})> 2\varepsilon v_{n}.
\]
If the observations in model $n$ follow the product law $\mathsf{P}%
_{p,n}=\mathsf{P}_{p}^{\otimes n}$ under the density $p\in\mathcal{G}$ and
\[
\chi^{2}(p_{1,n}\,|\,p_{0,n})\le n^{-1}\log(1+(2-4\delta)^{2})
\]
holds for some $\delta\in(0,1/2)$, then the following lower bound holds for
all density estimators $\hat p_{n}$ based on observations from model $n$:
\[
\inf_{\hat p_{n}}\sup_{p\in\mathcal{G}}\P ^{\otimes n}_{p}\big(d(\hat
p_{n},p)\ge\varepsilon v_{n}\big)\ge\delta.
\]
If the above holds for fixed $\varepsilon,\delta>0$ and all $n\in\mathbb{N}$,
then the optimal rate of convergence in a minimax sense over $\mathcal{G}$ is
not faster than $v_{n}$.
\end{thm}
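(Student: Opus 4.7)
The plan is the classical two-hypothesis reduction \`a la Le Cam. First I would convert the density-estimation problem into a binary testing problem: given any estimator $\hat p_n$, define the test
$\psi_n = \mathbb{1}\{d(\hat p_n, p_{1,n}) \le d(\hat p_n, p_{0,n})\}$.
By the triangle inequality and the separation $d(p_{0,n},p_{1,n}) > 2\varepsilon v_n$, whenever $d(\hat p_n, p_{j,n}) < \varepsilon v_n$ the estimator is strictly closer to $p_{j,n}$ than to $p_{1-j,n}$, so $\psi_n=j$. Contrapositively, $\{\psi_n\ne j\}\subseteq\{d(\hat p_n,p_{j,n})\ge \varepsilon v_n\}$, and therefore
\begin{align*}
\sup_{p\in\mathcal{G}} \P^{\otimes n}_{p}\bigl(d(\hat p_n, p)\ge \varepsilon v_n\bigr)
\;\ge\; \max_{j\in\{0,1\}} \P^{\otimes n}_{p_{j,n}}(\psi_n \ne j).
\end{align*}

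Second, I would apply Le Cam's two-point lemma: for \emph{any} binary test $\psi_n$ based on the $n$-sample,
\begin{align*}
\max_{j\in\{0,1\}} \P^{\otimes n}_{p_{j,n}}(\psi_n \ne j)
\;\ge\; \tfrac{1}{2}\bigl(1 - \|\P^{\otimes n}_{p_{0,n}} - \P^{\otimes n}_{p_{1,n}}\|_{\mathrm{TV}}\bigr),
\end{align*}
which is a direct consequence of the Neyman--Pearson lemma, since the optimal likelihood-ratio test achieves exactly the total-variation error.

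Third, I would control the total variation via the chi-squared divergence using the Cauchy--Schwarz bound $\|P-Q\|_{\mathrm{TV}}\le \tfrac{1}{2}\sqrt{\chi^2(Q|P)}$ together with the tensorization identity $\chi^2(Q^{\otimes n}|P^{\otimes n}) = (1+\chi^2(Q|P))^n - 1$. The assumed bound $\chi^2(p_{1,n}|p_{0,n}) \le n^{-1}\log(1+(2-4\delta)^2)$ is calibrated precisely so that
\begin{align*}
(1+\chi^2(p_{1,n}|p_{0,n}))^n - 1
\;\le\; e^{n\chi^2(p_{1,n}|p_{0,n})} - 1
\;\le\; (2-4\delta)^2,
\end{align*}
and consequently $\|\P^{\otimes n}_{p_{0,n}} - \P^{\otimes n}_{p_{1,n}}\|_{\mathrm{TV}} \le 1-2\delta$. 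Chaining the three steps yields $\inf_{\hat p_n}\sup_{p\in\mathcal{G}}\P^{\otimes n}_{p}(d(\hat p_n, p)\ge \varepsilon v_n) \ge \delta$. The asymptotic assertion about the minimax rate is then immediate: if this uniform lower bound $\delta>0$ holds for all $n$, then no estimator sequence can converge uniformly over $\mathcal{G}$ at a rate faster than $v_n$, since otherwise the left-hand side would tend to zero.

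There is no genuinely hard step here; all three ingredients (triangle-inequality reduction to testing, Le Cam's lemma, and chi-squared tensorization) are standard. The only delicate point is numerical bookkeeping: the constant $\log(1+(2-4\delta)^2)$ is chosen exactly so that after the tensorization inequality $(1+x)^n \le e^{nx}$ one recovers $(2-4\delta)^2$, which after the factor $1/2$ from Cauchy--Schwarz matches the Le Cam threshold $1-2\delta$ giving the clean final constant $\delta$. Replacing Cauchy--Schwarz by Pinsker would shift this calibration and degrade the numerical constant, so one must stick with the $\chi^2$-route as formulated in the statement.
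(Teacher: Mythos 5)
Your proof is correct and follows essentially the standard argument from Tsybakov (2009), which is precisely the reference the paper cites here --- the paper itself gives no proof of this theorem, only the citation. All three ingredients you chain together (triangle-inequality reduction to a two-point test, Le Cam's bound $\max_j \P^{\otimes n}_{p_{j,n}}(\psi_n\ne j)\ge\tfrac12(1-\|\P^{\otimes n}_{p_{0,n}}-\P^{\otimes n}_{p_{1,n}}\|_{\mathrm{TV}})$, and the $\chi^2$ tensorization $(1+x)^n-1\le e^{nx}-1$ combined with the Cauchy--Schwarz bound $\|P-Q\|_{\mathrm{TV}}\le\tfrac12\sqrt{\chi^2(Q\,|\,P)}$) are exactly the textbook route, and your bookkeeping of the constant $\log(1+(2-4\delta)^2)$ down to the final $\delta$ is right.
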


\subsubsection{Proof of a lower bound for the class $\mathcal{C}(\beta
,\gamma,L)$}

Let us start with the construction of the densities $p_{0,n}$ and $p_{1,n}.$
Define for any $\nu>1$ and $M>0$ two auxiliary functions
\begin{align*}
q(x)=\frac{\nu\sin(\pi/\nu)}{\pi}\frac{1}{1+x^{\nu}},\quad x\geq0
\end{align*}
and
\begin{align*}
\rho_{M}(x)=\frac{1}{\sqrt{2\pi}}e^{-\frac{\log^{2}(x)}{2}}\frac{\sin
(M\log(x))}{x}, \quad x\geq0.
\end{align*}
The properties of the functions $q$ and $\rho_{M}$ are collected in the
following lemma.

\begin{lem}
\label{l1_proof_low} The function $q$ is a probability density on
$\mathbb{R}_{+}$ with the Mellin transform
\begin{align*}
\mathcal{M}[q](z)=\frac{\sin(\pi/\nu)}{\sin(\pi z/\nu)} ,\quad\mathsf{Re}%
[z]>0.
\end{align*}
The Mellin transform of the function $\rho_{M}$ is given by
\begin{align}
\label{mellin_rho_pol}\mathcal{M}[\rho_{M}](u+\i v)=\frac{1}{2}\left[
e^{(u-1+\i(v+M))^{2}/2}-e^{(u-1+\i(v-M))^{2}/2}\right]  .
\end{align}
Hence
\begin{align*}
\int_{0}^{\infty}\rho_{M}(x) dx=\mathcal{M}[\rho_{M}](1)=0.
\end{align*}

\end{lem}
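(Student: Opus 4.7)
\medskip

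\noindent\textbf{Proof plan for Lemma~\ref{l1_proof_low}.}

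The claim splits into three almost independent pieces: (i) identify the Mellin transform of $q$ and use it to check that $q$ integrates to $1$; (ii) derive the closed-form Mellin transform of $\rho_M$; (iii) obtain $\int_0^\infty \rho_M(x)\,dx=0$ as the value at $z=1$. I would handle each separately; none of them is deep, but (i) requires the classical reflection identity.

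First, for $q$, the plan is to reduce the Mellin integral $\int_0^\infty \frac{x^{z-1}}{1+x^\nu}\,dx$ to a beta integral via the substitution $u=x^\nu$, obtaining $\frac{1}{\nu}B(z/\nu,\,1-z/\nu)$ for $0<\Re z<\nu$. Applying Euler's reflection formula $\Gamma(s)\Gamma(1-s)=\pi/\sin(\pi s)$ then yields $\int_0^\infty x^{z-1}/(1+x^\nu)\,dx=\frac{\pi/\nu}{\sin(\pi z/\nu)}$. Multiplying by the normalizing constant $\nu\sin(\pi/\nu)/\pi$ gives $\mathcal{M}[q](z)=\sin(\pi/\nu)/\sin(\pi z/\nu)$. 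Setting $z=1$ produces $\mathcal{M}[q](1)=1$, which together with the manifest positivity of $q$ on $\mathbb{R}_+$ confirms that $q$ is a probability density.

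Second, for $\rho_M$, the plan is to change variables $y=\log x$ so that $dx/x=dy$ and $x^{z-2}dx=e^{(z-1)y}\,dy$. This turns the Mellin integral into a Gaussian-type integral on $\mathbb{R}$:
\begin{equation*}
\mathcal{M}[\rho_M](z)=\frac{1}{\sqrt{2\pi}}\int_{-\infty}^{\infty}e^{-y^2/2}\sin(My)\,e^{(z-1)y}\,dy.
\end{equation*}
Writing $\sin(My)=(e^{\i M y}-e^{-\i M y})/(2\i)$ and completing the square in the resulting pair of integrals $\int_{-\infty}^\infty e^{-y^2/2+\alpha y}\,dy=\sqrt{2\pi}\,e^{\alpha^2/2}$ (with $\alpha=z-1\pm\i M$) yields a closed form of exactly the shape claimed in \eqref{mellin_rho_pol}, after substituting $z=u+\i v$.

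Third, for the vanishing of the integral, I would simply substitute $u=1$, $v=0$ into \eqref{mellin_rho_pol}: both exponents become $(\i M)^2/2=-M^2/2$ and $(-\i M)^2/2=-M^2/2$, so the bracket vanishes. Equivalently, this is transparent from the Gaussian representation above, since $\sin(My)$ is odd and $e^{-y^2/2}$ is even. The main (very mild) obstacle in the whole argument is tracking the integrability strip so that Fubini / the change of variable is justified; for $q$ one needs $0<\Re z<\nu$, while for $\rho_M$ the Gaussian factor $e^{-y^2/2}$ makes the integral absolutely convergent for every $z\in\mathbb{C}$, so the formula is automatically entire in $z$.
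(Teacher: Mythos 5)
Your plan is correct and is exactly the standard computation behind Lemma~\ref{l1_proof_low} (the paper states it without a separate proof): the beta integral $\frac{1}{\nu}B(z/\nu,1-z/\nu)$ plus Euler's reflection formula for $q$ on the strip $0<\Re z<\nu$, and the Gaussian integral $\int e^{-y^2/2+\alpha y}\,dy=\sqrt{2\pi}\,e^{\alpha^2/2}$ with $\alpha=z-1\pm\i M$ for $\rho_M$, with $z=1$ giving both the normalization of $q$ and the vanishing of $\int_0^\infty\rho_M$. One small point: carrying out your second step literally produces the prefactor $\frac{1}{2\i}$ rather than the $\frac{1}{2}$ written in \eqref{mellin_rho_pol} — a harmless slip in the stated formula, since only $|\mathcal{M}[\rho_M]|$ and the fact that the bracket vanishes at $u=1$, $v=0$ are used later, and both are unaffected.
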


Set now for any $M>0$
\begin{align*}
q_{0,M}(x):=q(x), \quad q_{1,M}(x):=q(x)+(q\vee\rho_{M}) (x),
\end{align*}
where $f\vee g$ stands for the multiplicative convolution of two functions $f$
and $g$ on $\mathbb{R}_{+}$ defined as
\begin{align*}
(f\vee g)(x):=\int_{0}^{\infty}\frac{f(t) g(x/t)}{t} dt, \quad x\geq0.
\end{align*}
The following lemma describes some properties of $q_{0,M}$ and $q_{1,M}.$

\begin{lem}
\label{l2_proof_low} For any $M>0$ the function $q_{1,M}$ is a probability
density satisfying
\begin{align*}
\|q_{0,M}-q_{1,M}\|_{\infty}=\sup_{x\in\mathbb{R}_{+}}|q_{0,M}(x)-q_{1,M}%
(x)|\gtrsim\exp(-M\pi/\nu) , \quad M\to\infty.
\end{align*}
Moreover, $q_{0,M}$ and $q_{1,M}$ are in $\mathcal{C}(\beta,\gamma,L)$ for all
$0<\beta<\pi/\nu$ and $\gamma>0$ with $L$ depending on $\gamma.$
\end{lem}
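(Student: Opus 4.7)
My plan is to verify the four claims of the lemma in turn: (a) $\int q_{1,M}=1$; (b) $q_{1,M}\geq 0$; (c) the $L^{\infty}$ lower bound; and (d) membership in $\mathcal{C}(\beta,\gamma,L)$ with $L$ depending only on $\gamma$. Claim (a) reduces to a one-line Mellin calculation: since the Mellin transform factorises over the multiplicative convolution, $\int_{0}^{\infty}(q\vee\rho_{M})(x)\,dx=\mathcal{M}[q](1)\mathcal{M}[\rho_{M}](1)=0$ by Lemma~\ref{l1_proof_low}, so $\int q_{1,M}=\int q=1$. For claim (d) I would start from the explicit formula to obtain the uniform bound $|\mathcal{M}[q](\gamma+\i v)|\lesssim e^{-\pi|v|/\nu}$, which gives $q\in\mathcal{C}(\beta,\gamma,L)$ for every $\beta<\pi/\nu$. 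For $q_{1,M}$, I use $\mathcal{M}[q_{1,M}]=\mathcal{M}[q](1+\mathcal{M}[\rho_{M}])$; the additional term is controlled by $e^{-\pi|v|/\nu}(e^{-(v-M)^{2}/2}+e^{-(v+M)^{2}/2})$, whose $L^{1}$-norm against $e^{\beta|v|}$ is $\lesssim e^{(\beta-\pi/\nu)M}$, uniformly bounded in $M\geq 0$ because $\beta<\pi/\nu$. Hence $L$ can be chosen independent of $M$.

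For claim (b) I would establish the pointwise bound $|(q\vee\rho_{M})(x)|\leq q(x)$ using Mellin inversion on $\Re z=\gamma$, where
\[
(q\vee\rho_{M})(x)=\frac{1}{2\pi}\int_{-\infty}^{\infty}x^{-\gamma-\i v}\mathcal{M}[q](\gamma+\i v)\mathcal{M}[\rho_{M}](\gamma+\i v)\,dv.
\]
The key observation is that the integrand is localised in Gaussian windows around $v=\pm M$, where $|\mathcal{M}[q]|$ is of order $e^{-\pi M/\nu}$. Choosing $\gamma$ close to $0$ for small $x$ and close to $\nu$ for large $x$ (both admissible by analyticity of $\mathcal{M}[q]$ in the strip $0<\Re z<\nu$; any contour shifts past the simple pole at $z=\nu$ produce residues that are themselves super-exponentially small in $M$ because $\mathcal{M}[\rho_{M}](\nu)=e^{(\nu-1)^{2}/2-M^{2}/2}\sin((\nu-1)M)$) matches the pointwise profile of $q$ up to the factor $e^{-\pi M/\nu}$. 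Thus for $M$ large enough the required inequality holds uniformly in $x>0$.

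The hardest step will be claim (c): the \emph{matching} lower bound $\|q\vee\rho_{M}\|_{\infty}\gtrsim e^{-\pi M/\nu}$. My plan is to evaluate $(q\vee\rho_{M})(x_{0})$ by Mellin inversion on $\Re z=1$ at a carefully chosen point $x_{0}$ and to carry out a saddle-point analysis of the peaks of $\mathcal{M}[\rho_{M}](1+\i v)$ at $v=\pm M$, exploiting the asymptotics $\sin(\pi(1+\i v)/\nu)\sim\tfrac{1}{2\i}e^{\pi|v|/\nu\mp\i\pi/\nu}$ as $v\to\pm\infty$. Because $\mathcal{M}[\rho_{M}](1+\i v)$ is odd in $v$, the symmetric choice $x_{0}=1$ may produce cancellation between the two peaks; I would instead take $x_{0}=e^{s}$ with $s$ chosen (depending on $\nu$) so that $x_{0}^{\mp\i M}$ combines with the above phases to align the two peak contributions constructively, yielding a leading term of order $e^{-\pi M/\nu}$. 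The main obstacle is verifying that this leading term is nonzero for generic $M$ and produces the claimed lower bound after controlling the Gaussian tails of the saddle contributions.
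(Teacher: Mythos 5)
Your proposal follows essentially the same route as the paper: normalization via $\int_0^\infty (q\vee\rho_M)=\mathcal{M}[q](1)\mathcal{M}[\rho_M](1)=0$, the sup-norm lower bound via the Mellin/Fourier representation of $q\vee\rho_M$ on the line $\Re z=1$ with Gaussian windows at $v=\pm M$ combined with the $e^{-\pi|v|/\nu}$ decay of $\mathcal{M}[q]$ (the paper writes this through Parseval and the Gamma-function form of $\mathcal{F}[R]$, which is the same thing by the reflection formula), and class membership from $\mathcal{M}[q\vee\rho_M]=\mathcal{M}[q]\mathcal{M}[\rho_M]$ together with the Gaussian bound on $\mathcal{M}[\rho_M]$. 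Your step (b) on nonnegativity of $q_{1,M}$ (only sketched) and the explicit phase-alignment choice $x_0=e^{s}$ go slightly beyond the paper, which does not verify positivity and leaves the non-cancellation implicit in taking the supremum over $y$; these additions are sound in spirit and do not change the approach.
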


\begin{proof}
First note that
\begin{eqnarray*}
\int_0^\infty q_{1,M}(x) dx=1+\int_0^\infty (q\vee \rho_M) (x)=1+\mathcal{M}[q](1)\mathcal{M}[\rho_M](1) = 1.
\end{eqnarray*}
Furthermore, due to the Parseval identity
\begin{eqnarray*}
(q\vee \rho_M) (y)&=&\int_{0}^{\infty}\frac{1}{\sqrt{2\pi}}e^{-\frac{\log^{2}(x)}{2}}\frac{\sin(M\log(x))}{x^{2}}\frac{1}{1+(y/x)^{\nu}}dx	
\\
&=&\int_{-\infty}^{\infty}\frac{1}{\sqrt{2\pi}}e^{-\frac{v^{2}}{2}}\sin(Mv)\frac{e^{-v}}{1+e^{-\nu(v-y_{l})}}dv
\\
&=&e^{-\log(y)}\int_{-\infty}^{\infty}\frac{1}{\sqrt{2\pi}}e^{-\frac{v^{2}}{2}}\sin(Mv)\frac{e^{(y_{l}-v)}}{1+e^{\nu (\log(y)-v)}}dv
\\
&=&	\frac{e^{-\log(y)}}{2\pi}\int_{-\infty}^{\infty}\frac{1}{\sqrt{2\pi}}e^{-\frac{v^{2}}{2}}\sin(Mv)\frac{e^{(\log(y)-v)}}{1+e^{\nu (\log(y)-v)}}dv
\\
&=&	\frac{e^{-\log(y)}}{2\pi}\int_{-\infty}^{\infty}e^{-\i u\log(y)}\left[\frac{H(u+M)-H(u-M)}{2}\right]\mathcal{F}[R](u)du,
\end{eqnarray*}
where \(R(x)=\frac{e^{x}}{1+e^{\nu x}}\) and  \(H(x)=e^{-x^{2}/2}\).
Note that
\begin{eqnarray*}
\mathcal{F}[R](u)=\int_{-\infty}^{\infty}\frac{e^{x+\i ux}}{1+e^{\nu x}}\,dx=\frac{1}{\nu}\int_{-\infty}^{\infty}\frac{e^{v/\nu+\i uv/\nu}}{1+e^{v}}\,dx=\frac{1}{\nu}\,\Gamma\left(\frac{1+\i u}{\nu}\right)\Gamma\left(1-\frac{1+\i u}{\nu}\right).
\end{eqnarray*}
Hence due to \eqref{gamma_asymp}
\[
\sup_{y\in \mathbb{R}_+}|q_{0,M}(y)-q_{1,M}(y)|=\sup_{y\in \mathbb{R}_+}|(q\vee \rho_M) (y)|\gtrsim \exp(-M\pi/\nu), \quad M\to \infty.
\]
The second statement of the lemma follows from Lemma~\ref{l1_proof_low} and the fact that \(\mathcal{M}[q\vee \rho_M]=\mathcal{M}[q]\mathcal{M}[\rho_M].\)
\end{proof}	
Let $T_{0,M}$ and $T_{1,M}$ be two random variables with densities $q_{0,M}$
and $q_{1,M},$ respectively. Then the density of the r.v. $|W_{T_{i,M}}|,$
$i=0,1,$ is given by
\begin{align*}
p_{i,M}(x)  &  :=\frac{2}{\sqrt{2\pi}}\int_{0}^{\infty}\lambda^{-1/2}%
e^{-\frac{x^{2}}{2\lambda}}q_{i,M}(\lambda)\,d\lambda\quad i=0,1.
\end{align*}
For the Mellin transform of $p_{i,M}$ we get
\begin{align}
\label{p_mellin_pol}\mathcal{M}[p_{i,M}](z)  &  =\mathbb{E}\bigl[|W_{1}%
|^{z-1}\bigr]\mathbb{E}\bigl[T_{i,M}^{(z-1)/2}\bigr]\nonumber\\
&  =\mathbb{E}\bigl[|W_{1}|^{z-1}\bigr]\mathcal{M}[q_{i,M}%
]((z+1)/2)\nonumber\\
&  =\frac{2^{z/2}}{\sqrt{2\pi}}\Gamma(z/2)\mathcal{M}[q_{i,M}]((z+1)/2), \quad
i=0,1.
\end{align}

\begin{lem}
\label{l3_proof_low} The $\chi^{2}$-distance between the densities $p_{0,M}$
and $p_{1,M}$ fulfills
\begin{align*}
\chi^{2}(p_{1,M}|p_{0,M})=\int\frac{(p_{1,M}(x)-p_{0,M}(x))^{2}}{p_{0,M}%
(x)}dx\lesssim e^{-M\pi(1+2/\nu)}, \quad M\to\infty.
\end{align*}

\end{lem}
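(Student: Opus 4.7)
The plan is to translate the $\chi^{2}$–integral into the Mellin domain via Parseval's identity, and then extract the exponential rate $e^{-M\pi(1+2/\nu)}$ from the explicit factorisation of $\mathcal{M}[p_{1,M}-p_{0,M}]$. The first observation is that $p_{0,M}=p_{0}$ is independent of $M$: it is the density of $|W_{T_{0}}|$ with $T_{0}\sim q$. Substituting $\lambda=x^{2}s$ in the defining integral one checks that $p_{0}(x)$ is continuous and strictly positive on $(0,\infty)$, with the polynomial tail $p_{0}(x)\sim C_{\nu}\,x^{-(2\nu-1)}$ as $x\to\infty$. This yields the crucial pointwise lower bound
\[
\frac{1}{p_{0}(x)}\;\lesssim\;1+x^{2\nu-1},\qquad x>0.
\]

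With this bound I would split
\[
\chi^{2}(p_{1,M}\,|\,p_{0,M})\;\lesssim\;\int_{0}^{\infty}(p_{1,M}-p_{0,M})^{2}\,dx \;+\;\int_{0}^{\infty}x^{2\nu-1}(p_{1,M}-p_{0,M})^{2}\,dx,
\]
and apply the Mellin–Parseval formula $\int_{0}^{\infty}x^{2\gamma-1}|f(x)|^{2}dx=(2\pi)^{-1}\!\int_{-\infty}^{\infty}|\mathcal{M}[f](\gamma+\i v)|^{2}dv$ with $\gamma=1/2$ and $\gamma=\nu$ respectively. Using Lemma~\ref{l1_proof_low} together with \eqref{p_mellin_pol} and the relation $\mathcal{M}[q\vee\rho_{M}]=\mathcal{M}[q]\,\mathcal{M}[\rho_{M}]$, the Mellin transform of the difference factors as
\[
\mathcal{M}[p_{1,M}-p_{0,M}](\gamma+\i v)=\frac{2^{(\gamma+\i v)/2}}{\sqrt{2\pi}}\,\Gamma\!\bigl((\gamma+\i v)/2\bigr)\,\mathcal{M}[q]\!\bigl((\gamma+1+\i v)/2\bigr)\,\mathcal{M}[\rho_{M}]\!\bigl((\gamma+1+\i v)/2\bigr).
\]

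The three factors are then estimated separately, and this is the technical heart of the proof. Stirling's formula gives $|\Gamma(\gamma/2+\i v/2)|^{2}\lesssim(1+|v|)^{\gamma-1}e^{-\pi|v|/2}$. From $\mathcal{M}[q](z)=\sin(\pi/\nu)/\sin(\pi z/\nu)$ and $|\sin(a+\i b)|^{2}=\sin^{2}a+\sinh^{2}b$ one obtains $|\mathcal{M}[q]((\gamma+1)/2+\i v/2)|^{2}\lesssim e^{-\pi|v|/\nu}$ for $|v|$ large. Finally, from the explicit formula \eqref{mellin_rho_pol},
\[
\bigl|\mathcal{M}[\rho_{M}]((\gamma+1)/2+\i v/2)\bigr|^{2}\;\lesssim\;e^{-(v/2-M)^{2}}+e^{-(v/2+M)^{2}},
\]
so the integrand is sharply concentrated, with Gaussian width $O(1)$, around $v=\pm2M$.

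Combining these three estimates, at the peak $v=2M$ the $\Gamma$-factor contributes $e^{-\pi M}$ and the $\mathcal{M}[q]$-factor contributes $e^{-2\pi M/\nu}$, producing the exponential rate $e^{-\pi M(1+2/\nu)}$. Integrating the Gaussian envelope over the $O(1)$-window preserves this rate, so that
\[
\int_{-\infty}^{\infty}|\mathcal{M}[p_{1,M}-p_{0,M}](\gamma+\i v)|^{2}dv\;\lesssim\;(1+M)^{\gamma-1}\,e^{-\pi M(1+2/\nu)}
\]
for both $\gamma=1/2$ and $\gamma=\nu$; the resulting (polynomial in $M$) prefactor is absorbed into the symbol ``$\lesssim$'' as $M\to\infty$. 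Adding the two contributions yields the claimed bound. The main obstacle is the careful exponential tracking of the three factors near the off-axis peak $v=\pm 2M$: each must be controlled sharply, since it is precisely the interaction of the decays $e^{-\pi|v|/2}$ from $\Gamma$ and $e^{-\pi|v|/\nu}$ from $\mathcal{M}[q]$, evaluated at the peak of $|\mathcal{M}[\rho_{M}]|^{2}$, that produces the exponent $\pi(1+2/\nu)$.
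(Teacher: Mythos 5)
Your proposal is correct and takes essentially the same route as the paper: lower-bounding $p_{0,M}$ via its polynomial tail $x^{-(2\nu-1)}$, splitting $\chi^{2}$ into the unweighted and $x^{2\nu-1}$-weighted $L^{2}$ integrals, passing to the Mellin domain by Parseval, and extracting the rate $e^{-\pi M(1+2/\nu)}$ from the interplay of the Gamma decay $e^{-\pi|v|/2}$, the $\mathcal{M}[q]$ decay $e^{-\pi|v|/\nu}$, and the Gaussian concentration of $\mathcal{M}[\rho_{M}]$ at $v=\pm 2M$ (the paper's bound \eqref{mellin_rho_bound}). The only cosmetic difference is that you use the squared-modulus Plancherel form of Parseval, while the paper writes the equivalent bilinear contour-integral version \eqref{parseval}.
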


\begin{proof}
First note that \(p_{0,M}(x)>0\) on \([0,\infty).\)
Since
\begin{eqnarray*}
p_{0,M}(x)&=&\frac{2}{\sqrt{2\pi}}\frac{\nu\sin(\pi/\nu)}{\pi}\int_{0}^{\infty}\lambda^{-1/2}e^{-\frac{x^{2}}{2\lambda}}\frac{1}{1+\lambda^{\nu}}d\lambda\\/y=1/\lambda/&=&\frac{2}{\sqrt{2\pi}}\frac{\nu\sin(\pi/\nu)}{\pi}\int_{0}^{\infty}y^{1/2}e^{-y\frac{x^{2}}{2}}\frac{1}{y^{2}(1+y^{-\nu})}dy\\&=&\frac{2}{\sqrt{2\pi}}\frac{\nu\sin(\pi/\nu)}{\pi}\int_{0}^{\infty}e^{-y\frac{x^{2}}{2}}\frac{y^{\nu-1/2-1}}{(1+y^{\nu})}dy\\&\asymp&\frac{2}{\sqrt{2\pi}}\frac{\nu\sin(\pi/\nu)}{\pi}\Gamma(\nu-1/2)x^{-2\nu+1}, \quad x\to \infty,
\end{eqnarray*}
we have \(p_{0,M}(x)\gtrsim x^{-2\nu+1},\) \(x\to \infty.\) Furthermore, due to \eqref{p_mellin_pol} and the Parseval identity
\begin{multline}
\label{parseval}
\int_{0}^{\infty}x^{2\nu-1}\left|p_{0,M}(x)-p_{1,M}(x)\right|^{2}dx=
\\
\frac{2^{-4+2\nu}}{\pi}\int_{\gamma-i\infty}^{\gamma+i\infty}\mathcal{M}[q\vee \rho_M]\left(\frac{z+1}{2}\right)
\Gamma\left(\frac{z}{2}\right)\mathcal{M}[q\vee \rho_M]\left(\frac{2\nu-z+1}{2}\right)\Gamma\left(\frac{2\nu-z}{2}\right)dz,
\end{multline}
where \(\mathcal{M}[q\vee \rho_M](z)=\mathcal{M}[q](z)\mathcal{M}[\rho_M](z).\) Due to \eqref{mellin_rho_pol}
\begin{eqnarray}
\label{mellin_rho_bound}
|\mathcal{M}[\rho_{M}](u+\i v)|\leq e^{\frac{(u-1)^{2}}{2}}\frac{\phi(v+M)+\phi(v-M)}{2}
\end{eqnarray}
with \(\phi(v)=e^{-\frac{ v^{2}}{2}}.\) Combining \eqref{gamma_asymp} (Appendix), \eqref{parseval}  and  \eqref{mellin_rho_bound}, we derive
\begin{eqnarray*}
\chi^{2}(p_{1,M}|p_{0,M})&=&\int\frac{(p_{1,M}(x)-p_{0,M}(x))^{2}}{p_{0,M}(x)}dx
\\
&\lesssim&\int_{0}^{\infty}(p_{1,M}(x)-p_{0,M}(x))^{2}dx+\int_{0}^{\infty}x^{2\nu-1}(p_{1,M}(x)-p_{0,M}(x))^{2}dx\\&\lesssim&\int_{-\infty}^{\infty}|v|^{\nu-1}e^{-|v|\pi/2-|v|\pi/\nu}\left(\phi(v/2+M)+\phi(v/2-M)\right)^{2}dv
\\
&\lesssim & M^{\nu-1}e^{-M\pi(1+2/\nu)}, \quad M\to \infty.
\end{eqnarray*}
\end{proof}
Fix some $\kappa\in(0,1/2).$ Due to Lemma~\ref{l3_proof_low}, the inequality
\[
n\chi^{2}(p_{1,M}|p_{0,M})\leq\kappa
\]
holds for $M$ large enough, provided
\begin{align*}
M=\frac{1+\epsilon}{\pi(1+2/\nu)}(\log(n)+(\nu-1)\log\log(n))
\end{align*}
for arbitrary small $\epsilon>0.$ Hence Lemma~\ref{l2_proof_low} and
Theorem~\ref{ThmLowerBound} imply
\[
\inf_{\hat p_{n}}\sup_{p\in\mathcal{C}(\beta,\gamma,L)}\mathsf{P}%
_{p,n}\big(\|\hat p_{n}-p\|_{\infty}\ge c v_{n}\big)\ge\delta.
\]
for any $\beta<\pi/\nu<\pi,$ any $\gamma>0,$ some constants $c>0,$ $\delta>0$
and $v_{n}=n^{-\beta/(\pi+2\beta)}\log^{-\frac{\pi-\beta}{\pi+2\beta}}(n).$

\subsubsection{Proof of a lower bound for the class $\mathcal{D}(\beta
,\gamma,L)$}

Define for any $\nu>1,$ $\alpha>0$ and $M>0,$
\begin{align*}
q(x)=\left[  2\Gamma(\nu)\right]  ^{-1}\times%
\begin{cases}
\log^{\nu-1}(1/x), & 0\leq x\leq1,\\
x^{-2}\log^{\nu-1}(x), & x>1
\end{cases}
\end{align*}
and
\begin{align*}
\rho_{M}(x)=\frac{1}{\sqrt{2\pi}}e^{-\frac{\log^{2}(x)}{2}}\frac{\sin
(M\log(x))}{x\log(x)}, \quad x\geq0.
\end{align*}
The properties of the functions $q$ and $\rho_{M}$ can be found in the next lemma.

\begin{lem}
\label{l1_proof_low_log} The function $q$ is a probability density on
$\mathbb{R}_{+}$ with the Mellin transform
\begin{align*}
\mathcal{M}[q](z)=\frac{1}{2}\left[  z^{-\nu}+(2-z)^{-\nu}\right]  ,\quad0<
\mathsf{Re}[z]< 2.
\end{align*}
The Mellin transform of the function $\rho_{M}$ is given by
\begin{align}
\label{mellin_rho}\mathcal{M}[\rho_{M}](u+\i v)=e^{\frac{(u-1)^{2}}{2}}%
\frac{G(u,v+M)-G(u,v-M)}{2},
\end{align}
where $G(u,v)=\int_{-\infty}^{v}e^{-\frac{x^{2}}{2}+\i x(u-1)}dx.$ Hence
\begin{align*}
\zeta_{M}:=\int_{0}^{\infty}\rho_{M}(x) dx=\mathcal{M}[\rho_{M}](1)=\int
_{-M}^{M}e^{-\frac{x^{2}}{2}}dx.
\end{align*}

\end{lem}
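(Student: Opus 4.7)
The lemma splits into two essentially independent computations: one for $q$, one for $\rho_M$, both of which become elementary Gamma/Gaussian integrals after a logarithmic change of variable.

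For the statements about $q$, I would first observe that $q$ is nonnegative and piecewise continuous. The total mass and the Mellin transform follow by the same recipe: on $(0,1)$ the substitution $u=-\log x$ turns $x^{z-1}\log^{\nu-1}(1/x)\,dx$ into $u^{\nu-1}e^{-uz}\,du$, while on $(1,\infty)$ the substitution $u=\log x$ turns $x^{z-3}\log^{\nu-1}(x)\,dx$ into $u^{\nu-1}e^{-u(2-z)}\,du$. Each resulting integral is a Gamma integral, so after dividing by $2\Gamma(\nu)$ one obtains
\[
\int_0^1 x^{z-1}q(x)\,dx = \frac{1}{2z^\nu}\quad(\Re z>0), \qquad \int_1^\infty x^{z-1}q(x)\,dx = \frac{1}{2(2-z)^\nu}\quad(\Re z<2).
\]
Evaluating at $z=1$ gives $\int q = 1$, and summing in the common strip $0<\Re z<2$ yields the claimed closed form for $\mathcal{M}[q]$.

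For $\rho_M$, I would again substitute $y=\log x$ in $\mathcal{M}[\rho_M](z) = \int_0^\infty x^{z-1}\rho_M(x)\,dx$; the combination $\rho_M(x)\,dx$ collapses to $\frac{1}{\sqrt{2\pi}}e^{-y^2/2}\frac{\sin(My)}{y}\,dy$, while $x^{z-1}$ becomes $e^{y(z-1)}$. Writing $z=u+{\i}v$ and using the Dirichlet-type representation $\sin(My)/y = \frac{1}{2}\int_{-M}^M e^{{\i}ty}\,dt$, Fubini gives
\[
\mathcal{M}[\rho_M](u+{\i}v) = \frac{1}{2\sqrt{2\pi}}\int_{-M}^M\!\int_{-\infty}^\infty e^{-y^2/2+y[(u-1)+{\i}(v+t)]}\,dy\,dt.
\]
The inner integral equals $\sqrt{2\pi}\,e^{((u-1)+{\i}(v+t))^2/2}$ (Gaussian moment generating function, analytically continued to a complex argument). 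Pulling the factor $e^{(u-1)^2/2}$ out of the completed square and substituting $x=v+t$ in the remaining one-dimensional integral turns the result into $\int_{v-M}^{v+M}e^{-x^2/2+{\i}x(u-1)}\,dx = G(u,v+M) - G(u,v-M)$, which is precisely (\ref{mellin_rho}). Specializing to $u=1$, $v=0$ makes the prefactor $1$ and the integrand of $G$ reduce to $e^{-x^2/2}$, so $\mathcal{M}[\rho_M](1)$ becomes a Gaussian integral of $e^{-x^2/2}$ over $[-M,M]$, producing $\zeta_M$.

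The only points deserving care are the justification of the Fubini exchange and the complex-argument Gaussian evaluation; both are routine, since the integrand is absolutely integrable on $\mathbb{R}\times[-M,M]$ and $\alpha \mapsto e^{\alpha^2/2}$ is the entire analytic extension of the real moment generating function $\int e^{-y^2/2+y\alpha}\,dy/\sqrt{2\pi}$. No deeper obstacle arises.
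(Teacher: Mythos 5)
Your computation is correct, and the paper in fact states this lemma without proof, so there is no authorial argument to compare against: the logarithmic substitution reducing $q$ to two Gamma integrals (giving $\tfrac{1}{2}z^{-\nu}$ from $(0,1)$ for $\Re z>0$ and $\tfrac{1}{2}(2-z)^{-\nu}$ from $(1,\infty)$ for $\Re z<2$), and the Dirichlet-kernel/Fubini/complex-Gaussian argument for $\rho_M$, are exactly the intended elementary steps; the Fubini exchange is indeed harmless since $t$ ranges over the compact interval $[-M,M]$ and $e^{-y^2/2+(u-1)y}$ is integrable in $y$.

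One small point you gloss over: evaluating your (correct) formula \eqref{mellin_rho} at $u=1$, $v=0$ yields $\mathcal{M}[\rho_M](1)=\tfrac{1}{2}\int_{-M}^{M}e^{-x^{2}/2}\,dx$, not $\int_{-M}^{M}e^{-x^{2}/2}\,dx$ as displayed in the lemma, and a direct check via $\int_{0}^{\infty}e^{-y^{2}/2}\sin(My)y^{-1}\,dy=\tfrac{\pi}{2}\operatorname{erf}(M/\sqrt{2})$ confirms the value with the factor $\tfrac12$. So the lemma's final display carries a spurious factor of $2$, and your closing sentence (``producing $\zeta_M$'') silently inherits it. This is a typo in the statement rather than a defect of your argument --- what is used downstream is only the identity $\zeta_M=\mathcal{M}[\rho_M](1)$ --- but you should state the value as $\tfrac{1}{2}\int_{-M}^{M}e^{-x^{2}/2}\,dx$ rather than assert agreement with the displayed one.
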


Set now for any $M>0$
\begin{align*}
q_{0,M}(x):=q(x), \quad q_{1,M}(x):=(1-\zeta_{M})q(x)+(q\vee\rho_{M}) (x),
\end{align*}
where $f\vee g$ stands for the multiplicative convolution of two functions $f$
and $g$ on $\mathbb{R}_{+}$ defined via
\begin{align*}
(f\vee g)(x):=\int_{0}^{\infty}\frac{f(t) g(x/t)}{t} dt.
\end{align*}

\begin{lem}
\label{l2_proof_low_log} For any $M>0,$ the function $q_{1,M}$ is a
probability density satisfying
\begin{align*}
\sup_{x\in(1-\delta,1+\delta)}|q_{0,M}(x)-q_{1,M}(x)|\asymp|\cos(\pi
\nu/2)|M^{-\nu+1}, \quad M\to\infty,
\end{align*}
where $\delta>0$ is a fixed number. Moreover, $q_{0,M}$ and $q_{1,M}$ are in
$\mathcal{D}(\beta,\gamma,L)$ for all $\beta<\nu-1$ and $\gamma\in(0,2).$
\end{lem}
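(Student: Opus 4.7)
The plan is to follow the same template as Lemma~\ref{l2_proof_low}, adapted to the polynomially-decaying Mellin transform regime. First I would verify that $q_{1,M}$ is a probability density. The integral is computed via the Mellin transform at $z=1$:
\[
\int_0^\infty q_{1,M}(x)\,dx = (1-\zeta_M)\mathcal{M}[q](1) + \mathcal{M}[q](1)\mathcal{M}[\rho_M](1) = (1-\zeta_M) + \zeta_M = 1,
\]
using $\mathcal{M}[q](1)=1$ from Lemma~\ref{l1_proof_low_log} and the factorization $\mathcal{M}[q\vee\rho_M]=\mathcal{M}[q]\mathcal{M}[\rho_M]$. Non-negativity requires a separate argument in which one bounds $(q\vee\rho_M)(x)$ from below against the positive term $(1-\zeta_M)q(x)$; since the asymptotic below shows $(q\vee\rho_M)$ is of smaller order than $q$ on the relevant region (in the parameter regime we care about), this is a technicality rather than a structural issue.

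For the main asymptotic on $(1-\delta,1+\delta)$ I would write
\[
q_{0,M}(x) - q_{1,M}(x) = \zeta_M q(x) - (q\vee\rho_M)(x)
\]
and apply the Mellin inversion
\[
\zeta_M q(x) - (q\vee\rho_M)(x) = \frac{1}{2\pi}\int_{-\infty}^{\infty} x^{-\gamma-\i v}\mathcal{M}[q](\gamma+\i v)\bigl[\zeta_M-\mathcal{M}[\rho_M](\gamma+\i v)\bigr]\,dv,
\]
with the natural choice $\gamma=1$, so that $\mathcal{M}[q](1+\i v)=\mathrm{Re}[(1+\i v)^{-\nu}]$. Using the explicit formula~\eqref{mellin_rho}, the factor $\zeta_M-\mathcal{M}[\rho_M](1+\i v)$ is exponentially small for $|v|\ll M$ (both tails $G(1,v\pm M)$ saturate at their limits), transitions on a unit scale near $|v|=M$, and approaches $\zeta_M$ with exponentially small correction for $|v|\gg M$. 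Meanwhile $\mathcal{M}[q](1+\i v)$ decays like $|v|^{-\nu}$ with leading coefficient involving $\cos(\pi\nu/2)$, as read off from $(\pm \i v)^{-\nu}=|v|^{-\nu}e^{\mp\i\pi\nu/2}$. Splitting the integral into the central region $|v|\lesssim M-M^{1/2}$, the transition region $|v|\asymp M$, and the tail $|v|\gtrsim M+M^{1/2}$, the dominant contribution comes from the transition region and yields the order $|\cos(\pi\nu/2)|M^{-\nu+1}$; uniformity over $x\in(1-\delta,1+\delta)$ follows because $|x^{-\gamma-\i v}|$ is bounded by a constant there.

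For membership in $\mathcal{D}(\beta,\gamma,L)$, it suffices to verify
\[
\int_{-\infty}^\infty \bigl|\mathcal{M}[q_{i,M}](\gamma+\i v)\bigr|(1+|v|^\beta)\,dv < \infty
\]
for any $\beta<\nu-1$ and $\gamma\in(0,2)$. From Lemma~\ref{l1_proof_low_log} one has $|\mathcal{M}[q](\gamma+\i v)|\lesssim (1+|v|)^{-\nu}$ in the strip, and \eqref{mellin_rho} together with the boundedness of $G(\gamma,\cdot)$ implies that $|\mathcal{M}[\rho_M](\gamma+\i v)|$ is bounded uniformly in $v$ for each fixed $\gamma$ and $M$. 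Hence $|\mathcal{M}[q_{1,M}](\gamma+\i v)|$ decays at the same polynomial rate as $|\mathcal{M}[q](\gamma+\i v)|$ and integrability with weight $1+|v|^\beta$ holds exactly when $\beta<\nu-1$. The main technical obstacle I anticipate is the precise identification of the leading constant in the transition region $|v|\asymp M$: extracting the $\cos(\pi\nu/2)$ factor cleanly requires either a stationary-phase or saddle-point analysis combined with a sharp estimate of how quickly $G(\gamma,v\pm M)$ saturates near $v=0$, which controls the error terms outside the transition window.
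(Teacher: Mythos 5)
Your mass computation ($\int q_{1,M}=(1-\zeta_M)+\zeta_M=1$) and the $\mathcal{D}(\beta,\gamma,L)$-membership argument coincide with the paper's, but the heart of the lemma --- the two-sided bound $\asymp|\cos(\pi\nu/2)|M^{-\nu+1}$ --- is not established by your sketch, and the mechanism you propose is quantitatively wrong. On the line $\gamma=1$ the bracket $\zeta_M-\mathcal{M}[\rho_M](1+\i v)=\tfrac12\bigl(\int_{-M}^{M}-\int_{v-M}^{v+M}\bigr)e^{-w^{2}/2}\,dw$ does switch from exponentially small to $\approx\zeta_M$, but it does so across a window of width $O(1)$ around $|v|=M$; on that window the other factor $\mathcal{M}[q](1+\i v)\approx\cos(\pi\nu/2)|v|^{-\nu}$ is of size $M^{-\nu}$, so the transition region contributes only $O(M^{-\nu})$, not $M^{-\nu+1}$. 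The order $M^{-\nu+1}$ comes from the tail $|v|\gtrsim M$, where the bracket is essentially the constant $\zeta_M$ and $\int_{|v|>M}|v|^{-\nu}dv\asymp M^{-\nu+1}$; and whether that size is actually attained depends on $x$: the oscillation $x^{-\i v}=e^{-\i v\log x}$ reduces the tail to $O(M^{-\nu}/|\log x|)$ for $x$ bounded away from $1$, whereas for $|\log x|\lesssim 1/M$ (in particular at $x=1$, where $\mathcal{M}[q](1+\i v)$ has eventually constant sign once $\cos(\pi\nu/2)\neq0$) there is no cancellation and the tail is $\asymp|\cos(\pi\nu/2)|M^{-\nu+1}$. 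Bounding $|x^{-\gamma-\i v}|$ by a constant can therefore only give the upper bound; it cannot produce the matching lower bound for the supremum, and your concluding plan (a sharp analysis of how fast $G(\gamma,v\pm M)$ saturates, i.e.\ of the transition window) is aimed at a region that is asymptotically negligible.

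For comparison, the paper works entirely in the $x$-domain: it writes $(q\vee\rho_M)(e^{A})$ as two oscillatory integrals $I_1,I_2$, applies the Erd\'elyi lemma to obtain $I_1(e^{A})+I_2(e^{A})\asymp A^{-1}e^{-A^{2}/2-A}\Gamma(\nu)\sin(AM)\cos(\pi\nu/2)M^{-\nu}$, and gets the extra factor $M$ by maximizing over $A=\log x$, the maximum being attained at $A\asymp\pi/(2M)$, i.e.\ at an $M$-dependent point inside $(1-\delta,1+\delta)$. Your frequency-side route could be repaired --- the bracket representation has the advantage of automatically encoding the cancellation between $\zeta_M q(x)$ and the delta-like part of $(q\vee\rho_M)(x)$ --- but only after (i) relocating the main term to the tail region $|v|\gtrsim M$ and (ii) supplying an explicit lower-bound argument at a suitable point ($x=1$ or $\log x\asymp 1/M$), neither of which is in the proposal. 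Nonnegativity of $q_{1,M}$ is left open in your sketch, but the paper glosses over it as well, so I do not count that against you.
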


\begin{proof}
First note that
\begin{eqnarray*}
\int_0^\infty q_{1,M}(x) dx=1+\int_0^\infty (q\vee \rho_M) (x) - \zeta_M= 1+\mathcal{M}[\rho_{M}](1)\times\mathcal{M}[q](1)-\zeta_M=1.
\end{eqnarray*}
Furthermore, \((q\vee \rho_M) (y)=\left[2\Gamma(\nu)\right]^{-1}[I_1(y)+I_2(y)]\) with
\begin{eqnarray*}
I_{1}(y)&=&\int_{y}^{\infty}e^{-\frac{\log^{2}(x)}{2\alpha}}x^{-2}\frac{\sin(M\log(x))}{\log(x)}\log^{\nu-1}(x/y)dx
\\
&=&\int_{\log(y)}^{\infty}e^{-\frac{z^{2}}{2\alpha}-z}\frac{\sin(Mz)}{z}(z-\log(y))^{\nu-1}dz
\end{eqnarray*}
and
\begin{eqnarray*}
I_{2}(y)&=&\int_{0}^{y}e^{-\frac{\log^{2}(x)}{2\alpha}}y^{-2}\frac{\sin(M\log(x))}{\log(x)}\log^{\nu-1}(y/x)dx
\\
&=&\int_{-\infty}^{\log(y)}e^{-\frac{z^{2}}{2\alpha}+z}y^{-2}\frac{\sin(Mz)}{z}(\log(y)-z)^{\nu-1}dz.
\end{eqnarray*}
By taking \(y=\exp(A),\) we get for \(I_1(y)\)
\begin{eqnarray*}
I_1(y)&=&\int_{0}^{\infty}e^{-\frac{(z+A)^{2}}{2\alpha}-(z+A)}\frac{\sin(M(z+A))}{z+A}z^{\nu-1}dz
\\
&=&\cos(AM)\int_{0}^{\infty}\frac{e^{-\frac{(z+A)^{2}}{2\alpha}-(z+A)}}{z+A}\sin(Mz)z^{\nu-1}dz
\\
&& + \sin(AM)\int_{0}^{\infty}\frac{e^{-\frac{(z+A)^{2}}{2\alpha}-(z+A)}}{z+A}\cos(Mz)z^{\nu-1}dz.
\end{eqnarray*}
The well known Erd\'elyi lemma implies
\begin{eqnarray*}
\int_{0}^{\infty}\frac{e^{-\frac{(z+A)^{2}}{2\alpha}-(z+A)}}{z+A}\sin(Mz)z^{\nu-1}dz\asymp\frac{e^{-\frac{A{}^{2}}{2\alpha}-A}}{A}\Gamma(\nu)\sin(\pi\nu/2)M^{-\nu}, \quad M\to \infty
\end{eqnarray*}
and
\begin{eqnarray*}
\int_{0}^{\infty}\frac{e^{-\frac{(z+A)^{2}}{2\alpha}-(z+A)}}{z+A}\cos(Mz)z^{\nu-1}dz\asymp\frac{e^{-\frac{A{}^{2}}{2\alpha}-A}}{A}\Gamma(\nu)\cos(\pi\nu/2)M^{-\nu}, \quad M\to \infty.
\end{eqnarray*}
Hence
\begin{eqnarray}
\label{I_1(e^A)}
I_1(e^A)\asymp\frac{e^{-\frac{A{}^{2}}{2\alpha}-A}}{A}\Gamma(\nu)\sin(AM+\pi\nu/2)M^{-\nu}, M\to \infty.
\end{eqnarray}
Analogously
\begin{eqnarray*}
I_2(e^A)&=&e^{-2A}\int_{-\infty}^{A}e^{-\frac{z^{2}}{2\alpha}+z}\frac{\sin(Mz)}{z}(A-z)^{\nu-1}dz
\\
&=& e^{-2A}\int_{0}^{\infty}e^{-\frac{(A-z)^{2}}{2\alpha}+A-z}\frac{\sin(M(A-z))}{A-z}z{}^{\nu-1}dz
\\
&=& e^{-2A}\sin(AM)\int_{0}^{\infty}e^{-\frac{(A-z)^{2}}{2\alpha}+A-z}\frac{\cos(Mz)}{A-z}z{}^{\nu-1}dz
\\
&& -e^{-2A}\cos(AM)\int_{0}^{\infty}e^{-\frac{(A-z)^{2}}{2\alpha}+A-z}\frac{\sin(Mz)}{A-z}z{}^{\nu-1}dz
\\
&\asymp& \frac{e^{-\frac{A^{2}}{2\alpha}-A}}{A}\Gamma(\nu)\sin(AM-\pi\nu/2)M^{-\nu}.
\end{eqnarray*}
Combining the previous estimates, we arrive at
\begin{eqnarray*}
I_2(e^A)+I_1(e^A)=2\frac{e^{-\frac{A^{2}}{2\alpha}-A}}{A}\Gamma(\nu)\sin(AM)\cos(\pi\nu/2)M^{-\nu}.
\end{eqnarray*}
It remains to note that the maximum of r.h.s of \eqref{I_1(e^A)} is attained for \(A\in \{\pi/2M,3\pi/2M\}\)
and
\begin{eqnarray*}
\sup_A[I_2(e^A)+I_1(e^A)]\asymp \Gamma(\nu)|\cos(\pi\nu/2)|M^{-\nu+1}.
\end{eqnarray*}
The property \(q_{1,M}\in \mathcal{D}(\beta,\gamma,L) \) for all \(\beta<\nu-1\) and \(\gamma\in (0,2)\) with \(L\) depending on \(\gamma,\) follows from the identity \(\mathcal{M}[q_{1,M}](z)=\mathcal{M}[q](z)(1-\zeta_M)+\mathcal{M}[\rho_{M}](z)\mathcal{M}[q](z)\) and \eqref{mellin_rho}.
\end{proof}	
Let $T_{0,M}$ and $T_{1,M}$ be two random variables with densities $q_{0,M}$
and $q_{1,M}$ respectively. The the density of the r.v. $|W_{T_{i,M}}|,$
$i=0,1,$ is given by
\begin{align*}
p_{i,M}(x)  &  :=\frac{2}{\sqrt{2\pi}}\int_{0}^{\infty}\lambda^{-1/2}%
e^{-\frac{x^{2}}{2\lambda}}q_{i,M}(\lambda)d\lambda,\quad i=0,1.
\end{align*}
For the Mellin transform of $p_{i,M},$ we have
\begin{align}
\label{p_mellin}\mathcal{M}[p_{i,M}](z)  &  =\mathbb{E}\bigl[|W_{1}%
|^{z-1}\bigr]\mathbb{E}\bigl[T_{i,M}^{(z-1)/2}\bigr]\nonumber\\
&  =\mathbb{E}\bigl[|W_{1}|^{z-1}\bigr]\mathcal{M}[q_{i,M}%
]((z+1)/2)\nonumber\\
&  =\frac{2^{z/2}}{\sqrt{2\pi}}\Gamma(z/2)\mathcal{M}[q_{i,M}]((z+1)/2).
\end{align}

\begin{lem}
\label{l3_proof_low_log} The $\chi^{2}$-distance between the densities
$p_{0,M}$ and $p_{1,M}$ satisfies
\begin{align*}
\chi^{2}(p_{1,M}|p_{0,M}):=\int\frac{(p_{1,M}(x)-p_{0,M}(x))^{2}}{p_{0,M}%
(x)}dx\lesssim e^{-M\pi/2}, \quad M\to\infty.
\end{align*}

\end{lem}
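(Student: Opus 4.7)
The strategy parallels the proof of Lemma~\ref{l3_proof_low}, with the key difference that $\mathcal{M}[q](z)=\tfrac12[z^{-\nu}+(2-z)^{-\nu}]$ now decays only polynomially along vertical lines, so the exponential rate has to emerge from the combination of the $\Gamma$-factor and the cancellation in $\mathcal{M}[\rho_M]-\zeta_M$. First I would establish a polynomial lower bound for the (fixed) density $p_{0,M}=p_0$. Applying Laplace's method to $p_0(x)=\frac{2}{\sqrt{2\pi}}\int_0^\infty \lambda^{-1/2}e^{-x^2/(2\lambda)}q(\lambda)\,d\lambda$ via the substitution $\lambda=x^2 u$ and the large-$\lambda$ asymptotic $q(\lambda)\asymp \log^{\nu-1}(\lambda)/(2\Gamma(\nu)\lambda^2)$ yields $p_0(x)\asymp x^{-3}(\log x)^{\nu-1}$ as $x\to\infty$, while continuity and positivity give a uniform lower bound on any bounded interval. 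Thus $1/p_0(x)\lesssim 1+x^3$ and
\[
\chi^{2}(p_{1,M}|p_{0,M})\lesssim \int_0^\infty |p_{1,M}(x)-p_{0,M}(x)|^2\,dx + \int_0^\infty x^{3}|p_{1,M}(x)-p_{0,M}(x)|^2\,dx.
\]

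Second, I would apply the Mellin--Parseval identity on the two vertical lines $\sigma\in\{1/2,\,2\}$, both contained in the common strip of analyticity $(0,3)$. Using the identity $q_{1,M}-q_{0,M}=-\zeta_M q+q\vee\rho_M$ together with (\ref{p_mellin}), the Mellin transform of the difference factorizes as
\[
\mathcal{M}[p_{1,M}-p_{0,M}](z)=\tfrac{2^{z/2}}{\sqrt{2\pi}}\,\Gamma(z/2)\,\mathcal{M}[q]\bigl(\tfrac{z+1}{2}\bigr)\bigl(\mathcal{M}[\rho_M]\bigl(\tfrac{z+1}{2}\bigr)-\zeta_M\bigr).
\]
Inserting the standard asymptotics $|\Gamma(\sigma/2+\i t/2)|^2\lesssim (1+|t|)^{\sigma-1}e^{-\pi|t|/2}$ and $|\mathcal{M}[q]((\sigma+1)/2+\i t/2)|\lesssim (1+|t|)^{-\nu}$ reduces the $\chi^2$ bound to a one-dimensional estimate of $\int|t|^{\alpha}e^{-\pi|t|/2}\,|\mathcal{M}[\rho_M]((\sigma+1)/2+\i t/2)-\zeta_M|^2\,dt$ on each line.

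The decisive step is then the bound on $\mathcal{M}[\rho_M]-\zeta_M$. Using (\ref{mellin_rho}) in the form $\mathcal{M}[\rho_M](u+\i v)=\frac{e^{(u-1)^2/2}}{2}\int_{v-M}^{v+M}e^{-y^2/2+\i(u-1)y}\,dy$, I would verify that (i) on $|v|\le M/2$ the difference $\mathcal{M}[\rho_M](u+\i v)-\zeta_M$ is super-exponentially small (of order $e^{-M^2/8}$), obtained by extending the integral to $\mathbb{R}$ via $\int_{-\infty}^\infty e^{-y^2/2+\i(u-1)y}dy=\sqrt{2\pi}\,e^{-(u-1)^2/2}$ and controlling the Gaussian tails together with the corresponding tail of $\zeta_M$; and (ii) on $|v|>M/2$ the pointwise bound $|\mathcal{M}[\rho_M](u+\i v)-\zeta_M|\lesssim M\,e^{-(|v|-M)^2/2}$ holds. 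Splitting the $t$-integral accordingly, the bulk part contributes super-exponentially, while the tail yields a saddle-point integral with exponent $-\pi|t|/2-(|t|/2-M)^2$ whose maximum near $|t|\sim 2M$ comfortably produces the rate $e^{-M\pi/2}$, with the polynomial factors from $\Gamma$, $\mathcal{M}[q]$ and the logarithmic prefactor $(\log x)^{\nu-1}$ absorbed into constants. The main obstacle is verifying the super-exponential bulk estimate uniformly in $u$ in a bounded set (which forces a careful simultaneous tail expansion of both $I(u,v,M)=\int_{v-M}^{v+M}e^{-y^2/2+\i(u-1)y}dy$ and $\zeta_M$), and then checking that the Gaussian tail estimate on $|v|>M/2$ is sharp enough to survive the polynomial blow-ups from $(1+|t|)^{\sigma-1-2\nu}$ and from the logarithmic term in the lower bound of $p_0$.
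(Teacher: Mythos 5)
Your route is the paper's route in outline — lower-bound $p_{0,M}$ to reduce $\chi^{2}$ to weighted $L^{2}$ distances, then Mellin--Parseval, then play the $e^{-\pi|t|/2}$ decay of the $\Gamma$-factors against the smallness of the $\rho_{M}$-factor — but two of your deviations are substantive and in fact repair weak points of the printed proof. First, your tail asymptotic $p_{0,M}(x)\asymp x^{-3}\log^{\nu-1}(x)$ is the correct one: the relevant integrand is $\lambda^{-1/2}q(\lambda)=c\,\lambda^{-5/2}\log^{\nu-1}(\lambda)$ for $\lambda>1$, whereas the paper's computation works with $\lambda^{-3/2}$ and claims $p_{0,M}(x)\gtrsim x^{-1}$; accordingly your weight $1+x^{3}$ with Parseval lines $\sigma\in\{1/2,2\}$ (both admissible, since $(\sigma+1)/2\in(0,2)$ for $\mathcal{M}[q]$) is the right reduction, and the extra polynomial factors $(1+|t|)^{\sigma-1-2\nu}$ are harmless against $e^{-\pi|t|/2}$. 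Second, and decisively, you keep the mass correction: the Mellin transform of $p_{1,M}-p_{0,M}$ carries the factor $\mathcal{M}[\rho_{M}]\bigl(\tfrac{z+1}{2}\bigr)-\zeta_{M}$, and \emph{all} of the decay in $M$ comes from this difference, since $\mathcal{M}[\rho_{M}]$ itself is of order one near the real axis. The paper's displays \eqref{parseval_log} and \eqref{mellin_rho_bound_log} drop the $-\zeta_{M}q$ term, and the final bound $\int e^{-|v|\pi/2}\bigl(\Phi(v/2+M)+\Phi(v/2-M)\bigr)^{2}dv$ does not actually decay in $M$; your factorization is the one that delivers the stated rate.

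One intermediate claim of yours is false as stated: for $|v|\gtrsim M+\sqrt{2\log M}$ one has $\mathcal{M}[\rho_{M}](u+\i v)\to 0$ while $\zeta_{M}$ tends to a positive constant, so $|\mathcal{M}[\rho_{M}](u+\i v)-\zeta_{M}|\asymp 1$ and cannot be $\lesssim M e^{-(|v|-M)^{2}/2}$; your Gaussian-tail bound is valid only up to $|v|\le M$ (roughly). This does not endanger the conclusion: in the far region the trivial bound $O(1)$ combined with $e^{-\pi|t|/2}$ and $|t|=2|v|>2M$ contributes $O(e^{-\pi M})$, and indeed the crude dichotomy "super-exponentially small for $|t|\le M$, bounded for $|t|>M$" already gives $\int_{|t|>M}e^{-\pi|t|/2}dt\lesssim e^{-\pi M/2}$, which is exactly the lemma; your saddle-point refinement, used only where the bound is valid, merely sharpens this to $e^{-\pi M}$ up to polynomial factors. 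So the proposal is correct after restricting the range of that pointwise estimate and treating $|v|>M$ trivially.
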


\begin{proof}
First note that \(p_{0,M}(x)>0\) on \([0,\infty).\)
Since
\begin{eqnarray*}
\int_{0}^{1}\lambda^{-1/2}e^{-\frac{x^{2}}{2\lambda}}\log^{\nu-1}(1/\lambda)d\lambda&=&\int_{0}^{1}\lambda^{-1/2}e^{-\frac{x^{2}}{2\lambda}}\log^{\nu-1}(1/\lambda)d\lambda\\/y=1/\lambda,\lambda=1/y/&=&\int_{1}^{\infty}y^{-3/2}e^{-x^{2}y/2}\log^{\nu-1}(y)dy\\&=&\int_{x^{2}}^{\infty}x^{-2}(y/x^{2})^{-3/2}e^{-y/2}\log^{\nu-1}(y/x^{2})dy\\&=&x\int_{x^{2}}^{\infty}y{}^{-3/2}e^{-y/2}\log^{\nu-1}(y/x^{2})dy\lesssim e^{-x^{2}/2}
\end{eqnarray*}
and
\begin{eqnarray*}
\int_{1}^{\infty}\lambda^{-3/2}e^{-\frac{x^{2}}{2\lambda}}\log^{\nu-1}(\lambda)d\lambda&=&\int_{0}^{1}y^{-1/2}e^{-\frac{x^{2}}{2}y}\log^{\nu-1}(1/y)dy\\&\asymp&\frac{\Gamma(1/2)}{\sqrt{2}}x^{-1}\log^{\nu-1}(x^{2}).
\end{eqnarray*}
we have \(p_{0,M}(x)\gtrsim x^{-1},\) \(x\to \infty.\) Furthermore, due to \eqref{p_mellin} and the Parseval identity
\begin{multline}
\label{parseval_log}
\int_{0}^{\infty}x^{a-1}\left|p_{0,M}(x)-p_{1,M}(x)\right|^{2}dx=
\\
\frac{2^{-4+a}}{\pi}\int_{\gamma-\i\infty}^{\gamma+\i\infty}\mathcal{M}[q\vee \rho_M]\left(\frac{z+1}{2}\right)
\Gamma\left(\frac{z}{2}\right)\mathcal{M}[q\vee \rho_M]\left(\frac{a-z+1}{2}\right)\Gamma\left(\frac{a-z}{2}\right)dz,
\end{multline}
where \(\mathcal{M}[q\vee \rho_M](z)=\mathcal{M}[q](z)\mathcal{M}[\rho_M](z).\) Due to \eqref{mellin_rho}
\begin{eqnarray}
\label{mellin_rho_bound_log}
|\mathcal{M}[\rho_{M}](u+\i v)|\leq e^{\frac{(u-1)^{2}}{2}}\frac{\Phi(v+M)+\Phi(v-M)}{2}
\end{eqnarray}
with \(\Phi(v)=\int_{-\infty}^{v}e^{-\frac{x^{2}}{2}}dx.\) Combining \eqref{parseval_log} with properly chosen \(\gamma>0\),  \eqref{mellin_rho_bound_log} and Lemma ~\ref{lemma_gamma_asymp} (see Appendix), we derive
\begin{eqnarray*}
\chi^{2}(p_{1}|p_{0})&=&\int\frac{(p_{1}(x)-p_{0}(x))^{2}}{p_{0}(x)}dx\lesssim\int_{0}^{\infty}(p_{1}(x)-p_{0}(x))^{2}dx+\int_{0}^{\infty}x(p_{1}(x)-p_{0}(x))^{2}dx\\&\lesssim&\int_{-\infty}^{\infty}e^{-|v|\pi/2}\left(\Phi(v/2+M)+\Phi(v/2-M)\right)^{2}dv\lesssim e^{-M\pi/2}, \quad M\to \infty.
\end{eqnarray*}
\end{proof}
Fix some $\kappa\in(0,1/2).$ Due to Lemma~\ref{l3_proof_low_log}, the
inequality
\[
n\chi^{2}(p_{1,M}|p_{0,M})\leq\kappa
\]
holds for $M$ large enough, provided
\begin{align*}
M=\frac{2(1+\epsilon)}{\pi}\log(n)
\end{align*}
for arbitrary small $\epsilon>0.$ Hence Lemma~\ref{l2_proof_low_log} and
Theorem~\ref{ThmLowerBound} imply
\[
\inf_{\hat p_{n}}\sup_{p\in\mathcal{D}(\beta,\gamma,L)}\mathsf{P}%
_{p,n}\big(\|\hat p_{n}-p\|_{\infty}\ge c v_{n}\big)\ge\delta.
\]
for any $\beta<\nu-1,$ any $\gamma\in(0,2),$ some constants $c>0,$ $\delta>0$
and $v_{n}=\log^{-\beta}(n).$

\subsection{Proof of Proposition~\ref{uniform_upper_bound}}

It holds
\begin{align*}
p_{T,n}(x)-\mathbb{E}[p_{T,n}(x)]  &  =\frac{1}{\sqrt{\pi}}\int_{-1/h_{n}%
}^{1/h_{n}}x^{-\gamma-\i v}\frac{K(vh_{n})}{2^{\gamma+\i v}}\\
&  \times\frac{\bigl\{\mathcal{M}_{n}[p_{|X|}](2(\gamma+\i v)-1)-\mathcal{M}%
[p_{|X|}](2(\gamma+\i v)-1)\bigr\}}{\Gamma((\gamma+\i v)-1/2)}\,dv.
\end{align*}
Due to Proposition~\ref{ExpBounds}
\[
\sup_{x\geq0}\bigl\{x^{\gamma}|\mathbb{E}[p_{T,n}(x)]-p_{T}(x)|\bigr\}\leq
\frac{\Delta_{n}}{\sqrt{\pi n}}\int_{-1/h_{n}}^{1/h_{n}}\frac{A_{1}}%
{2^{\gamma}}\frac{\log(e+|v|)}{\Gamma((\gamma+\i v)-1/2)}\,dv
\]
with $\Delta_{n}=O_{a.s.}(1).$

\subsection{Proof of Proposition~\ref{asymp_norm}}

We have
\begin{align*}
\operatorname{Var}(Z_{n,1})  & =\frac{1}{\pi}\int_{-1/h_{n}}^{1/h_{n}}%
\int_{-1/h_{n}}^{1/h_{n}}\frac{x^{-2\gamma-{\i}(v-u)}}{2^{2\gamma
+{\i}(v-u)}}\frac{\operatorname{Cov}\left(  |X_{1}|^{2(\gamma+{\i}
v-1)},|X_{1}|^{2(\gamma+{\i} u-1)}\right)  }{2^{2\gamma+{\i}(v-u)}%
\Gamma(\gamma-1/2+{\i} v)\Gamma(\gamma-1/2-{\i} u)}\,du\,dv\\
& =\frac{1}{\pi}\int_{-1/h_{n}}^{1/h_{n}}\int_{-1/h_{n}}^{1/h_{n}}\frac
{1}{\left(  2x\right)  ^{2(\gamma-1)+{\i}(v-u)}}\frac{\mathcal{M}%
[p_{\left\vert X\right\vert }](4\gamma-3+2{\i}(v-u))}{\Gamma(\gamma
-1/2+{\i} v)\Gamma(\gamma-1/2-{\i} u)}\,dv\,du\\
& -\frac{1}{\pi}\left\vert \int_{-1/h_{n}}^{1/h_{n}}\frac{1}{\left(
2x\right)  ^{(\gamma+{\i} v-1)}}\frac{\mathcal{M}[p_{\left\vert X\right\vert
}](2\gamma-1+2{\i} v)}{\Gamma(\gamma-1/2+{\i} v)}\,dv\right\vert
^{2}=R_{1}-R_{2}.
\end{align*}
Note that
\begin{align*}
R_{2} &  \leq\frac{1}{(2x)^{2\left(  \gamma-1\right)  }}\left(  \int
_{-1/h_{n}}^{1/h_{n}}\left\vert \frac{\mathcal{M}[p_{\left\vert X\right\vert
}](2\gamma-1+2{{\i}}v)}{\Gamma(\gamma-1/2+{{\i}}v)}\,\right\vert
dv\right)  ^{2}\\
&  =\frac{1}{(2x)^{2\left(  \gamma-1\right)  }}\left\vert \int_{-1/h_{n}%
}^{1/h_{n}}\bigl|\mathcal{M}[p_{T}](\gamma+{{\i}}v)\bigr|\,dv\right\vert
^{2}<C<\infty
\end{align*}
and furthermore
\begin{align*}
R_{1} &  =\frac{1}{x^{2(\gamma-1)}\pi}\int_{-1/h_{n}}^{1/h_{n}}\int_{-1/h_{n}%
}^{1/h_{n}}\frac{1}{x^{{{\i}}(v-u)}}\frac{\Gamma(2\gamma-3/2+{{\i}
}(v-u))\mathcal{M}[p_{T}](2\gamma-1+{{\i}}(v-u))}{\Gamma(\gamma-1/2+{{\i}
}v)\Gamma(\gamma-1/2-{{\i}}u)}\,dv\,du\\
&  =\frac{1}{x^{2(\gamma-1)}\pi}\times I_{n}.
\end{align*}
Without loss of generality we may take $x=1$ (for $x\neq1$ the proof is
similar). Observe that for $v\in\mathbb{R},$
\begin{equation}
|\Gamma(\gamma-1/2+{{\i}}v)|\geq C_{1}1_{\left\vert v\right\vert \leq
2}+C_{2}1_{\left\vert v\right\vert >2}\left\vert v\right\vert ^{\gamma
-1}e^{-\pi\left\vert v\right\vert /2},\label{g1}%
\end{equation}
for some constants $C_{1}>0,$ $C_{2}>0$ (depending on $\gamma$), and that
\begin{equation}
|\Gamma(2\gamma-3/2+{{\i}}(v-u))|\leq D_{1}1_{\left\vert u-v\right\vert
\leq2}+1_{\left\vert u-v\right\vert >2}D_{2}\left\vert u-v\right\vert
^{2(\gamma-1)}e^{-\pi\left\vert u-v\right\vert /2}\label{g2}%
\end{equation}
for some $D_{1}>0,$ $D_{2}>0.$ Let $\rho_{n}=h_{n}^{-\alpha}$ for
$0<\alpha<1/2$. By the estimates (\ref{g1}) and (\ref{g2}), one can
straightforwardly derive that the integral
\[
I_{1,n,\rho_{n}}:=\int_{-1/h_{n}}^{1/h_{n}}\int_{-1/h_{n}}^{1/h_{n}%
}1_{\left\vert v-u\right\vert \geq\rho_{n}}\frac{\Gamma(2\gamma-3/2+{{\i}
}(v-u))\mathcal{M}[p_{T}](2\gamma-1+{{\i}}(v-u))}{\Gamma(\gamma-1/2+{{\i}
}v)\Gamma(\gamma-1/2-{{\i}}u)}\,dv\,du
\]
can be bounded from above as
\[
|I_{1,n,\rho_{n}}|\lesssim h_{n}^{-3\left\vert 1-\gamma\right\vert }%
e^{\pi\left(  \frac{1}{2h_{n}}-\rho_{n}/2\right)  }+h_{n}^{-2\left\vert
1-\gamma\right\vert -1}e^{\pi\left(  \frac{1}{h_{n}}-\rho_{n}/2\right)
}+h_{n}^{-4\left\vert 1-\gamma\right\vert -1}e^{\pi\left(  \frac{1}{h_{n}%
}-\rho_{n}\right)  }%
\]
for $n\rightarrow\infty.$ Similarly
\begin{multline*}
\int_{-1/h_{n}}^{1/h_{n}}\int_{-1/h_{n}}^{1/h_{n}}1_{\left\vert u\right\vert
\leq\frac{1}{h_{n}}-\rho_{n}}1_{\left\vert v-u\right\vert \leq\rho_{n}}%
\frac{\Gamma(2\gamma-3/2+{{\i}}(v-u))\mathcal{M}[p_{T}](2\gamma-1+{{\i}
}(v-u))}{\Gamma(\gamma-1/2+{{\i}}v)\Gamma(\gamma-1/2-{{\i}}u)}\,dv\,du\\
=O\Bigl(h_{n}^{-l}e^{\pi\left(  \frac{1}{h_{n}}-\rho_{n}\right)  }\Bigr)
\end{multline*}
and
\begin{multline*}
\int_{-1/h_{n}}^{1/h_{n}}\int_{-1/h_{n}}^{1/h_{n}}1_{\left\vert v\right\vert
\leq\frac{1}{h_{n}}-\rho_{n}}1_{\left\vert v-u\right\vert \leq\rho_{n}}%
\frac{\Gamma(2\gamma-3/2+{{\i}}(v-u))\mathcal{M}[p_{T}](2\gamma-1+{{\i}
}(v-u))}{\Gamma(\gamma-1/2+{{\i}}v)\Gamma(\gamma-1/2-{{\i}}u)}\,dv\,du\\
=O\Bigl(h_{n}^{-l}e^{\pi\left(  \frac{1}{h_{n}}-\rho_{n}\right)  }\Bigr)
\end{multline*}
for some $l>0.$ Hence
\begin{align*}
I_{2,n,\rho_{n}}:=  & \int_{-1/h_{n}}^{1/h_{n}}\int_{-1/h_{n}}^{1/h_{n}%
}1_{\left\vert v-u\right\vert \leq\rho_{n}}\frac{\Gamma(2\gamma-3/2+{{\i}
}(v-u))\mathcal{M}[p_{T}](2\gamma-1+{{\i}}(v-u))}{\Gamma(\gamma-1/2+{{\i}
}v)\Gamma(\gamma-1/2-{{\i}}u)}\,dv\,du\\
& =\int_{-1/h_{n}}^{1/h_{n}}\int_{-1/h_{n}}^{1/h_{n}}1_{\left\vert
u\right\vert \geq\frac{1}{h_{n}}-\rho_{n}}1_{\left\vert v\right\vert \geq
\frac{1}{h_{n}}-\rho_{n}}1_{\left\vert v-u\right\vert \leq\rho_{n}}\\
& \times\frac{\Gamma(2\gamma-3/2+{{\i}}(v-u))\mathcal{M}[p_{T}%
](2\gamma-1+{{\i}}(v-u))}{\Gamma(\gamma-1/2+{{\i}}v)\Gamma(\gamma
-1/2-{{\i}}u)}\,dv\,du+O\Bigl(h_{n}^{-l}e^{\pi\left(  \frac{1}{h_{n}}%
-\rho_{n}\right)  }\Bigr)\\
& =:I_{3,n,\rho_{n}}+O\Bigl(h_{n}^{-l}e^{\pi\left(  \frac{1}{h_{n}}-\rho
_{n}\right)  }\Bigr).
\end{align*}
Now let us study the asymptotic behaviour of the integral $I_{3,n,\rho_{n}}.$
To this end, we will use the Stirling formula
\[
\Gamma(\gamma-1/2+{{\i}}v)=\left(  \gamma-1/2+iv\right)  ^{\gamma
-1+iv}e^{-\gamma+1/2-iv}\sqrt{2\pi}(1+O(\left\vert v\right\vert ^{-1})).
\]
First consider the integrand of $I_{3,n,\rho_{n}}$ in the case $u,v\rightarrow
+\infty,$ where
\begin{align*}
\Gamma(\gamma-1/2+{{\i}}v)\Gamma(\gamma-1/2-{{\i}}u) &  =2\pi\exp\left[
iv\log v-iu\log u-i\left(  v-u\right)  \right]  \\
&  \times\exp\left[  -\frac{\pi}{2}\left(  u+v\right)  +\left(  \gamma
-1\right)  \left(  \log v+\log u\right)  \right]  (1+O(1/u)+O(1/v)).
\end{align*}
Then on the set
\[
\left\{  \left\vert u\right\vert \geq\frac{1}{h_{n}}-\rho_{n}\right\}
\cap\left\{  \left\vert v\right\vert \geq\frac{1}{h_{n}}-\rho_{n}\right\}
\cap\left\{  \left\vert v-u\right\vert \leq\rho_{n}\right\}  \cap\left\{
v\geq0,u\geq0\right\}
\]
we define $u=1/h_{n}-r,$ $v=1/h_{n}-s$ with $0<r,s<\rho_{n},$ $\left\vert
r-s\right\vert <\rho_{n}$ to get%

\begin{align*}
\Gamma(\gamma-1/2+{{\i}}v)\Gamma(\gamma-1/2-{{\i}}u) &  =2e^{i\left(
1/h_{n}-s\right)  \log\left(  1/h_{n}-s\right)  -i\left(  1/h_{n}-r\right)
\log\left(  1/h_{n}-r\right)  -i\left(  r-s\right)  }\\
&  \times h_{n}^{-2\left(  \gamma-1\right)  }\exp\left[  -\pi/h_{n}\right]
\exp\left[  (r+s)\pi\right]  \\
&  \times(1+O(h_{n}))(1+O(\rho_{n}h_{n})).
\end{align*}
Note that due to the choice of $\rho_{n},$ $\rho_{n}h_{n}\downarrow0$ and
$\rho_{n}^{2}h_{n}\downarrow0.$ Using the asymptotic expansion
\[
\left(  1/h_{n}-s\right)  \log\left(  1/h_{n}-s\right)  -\left(
1/h_{n}-r\right)  \log\left(  1/h_{n}-r\right)  -\left(  r-s\right)  =\left(
r-s\right)  \log\left(  1/h_{n}\right)  +O(\rho_{n}^{2}h_{n}),
\]
we derive
\begin{align*}
\Gamma(\gamma-1/2+{{\i}}v)\Gamma(\gamma-1/2-{{\i}}u)  & =2\pi
h_{n}^{-2\left(  \gamma-1\right)  }\exp\left[  -\pi/h_{n}\right]  \exp\left[
(r+s)\pi\right]  \\
& \times\exp\left[  i\left(  r-s\right)  \log\left(  1/h_{n}\right)  \right]
(1+O(\rho_{n}^{2}h_{n})).
\end{align*}
Analogously, on the set
\[
\left\{  \left\vert u\right\vert \geq\frac{1}{h_{n}}-\rho_{n}\right\}
\cap\left\{  \left\vert v\right\vert \geq\frac{1}{h_{n}}-\rho_{n}\right\}
\cap\left\{  \left\vert v-u\right\vert \leq\rho_{n}\right\}  \cap\left\{
v\leq0,u\leq0\right\}
\]
we define $u=-1/h_{n}+r,$ $v=-1/h_{n}+s,$ with $0<r,s<\rho_{n},$ $\left\vert
r-s\right\vert <\rho_{n},$ to get
\begin{align*}
\Gamma(\gamma-1/2+{{\i}}v)\Gamma(\gamma-1/2-{{\i}}u)  & =2\pi
h_{n}^{-2\left(  \gamma-1\right)  }\exp\left[  -\pi/h_{n}\right]  \exp\left[
(r+s)\pi\right]  \\
& \times\exp\left[  -i\left(  r-s\right)  \log\left(  1/h_{n}\right)  \right]
(1+O(\rho_{n}^{2}h_{n})).
\end{align*}
Hence the integral $I_{3,n,\rho_{n}}$ can be decomposed as follows
\[
I_{3,n,\rho_{n}}=:\frac{h_{n}^{2\left(  \gamma-1\right)  }}{\pi}\exp\left[
\pi/h_{n}\right]  \left\{  \Re\lbrack I_{4,n,\rho_{n}}]+O(\rho_{n}^{2}%
h_{n})\right\},
\]
where
\begin{align*}
I_{4,n,\rho_{n}}  & =\int\int1_{0\leq r\leq\rho_{n}}1_{0\leq s\leq\rho_{n}%
}1_{\left\vert r-s\right\vert \leq\rho_{n}}\exp\left[  -(r+s)\pi\right]
\Gamma(2\gamma-3/2+{{\i}}(r-s))\\
& \times\mathcal{M}[p_{T}](2\gamma-1+{{\i}}(r-s))\exp\left[  i\left(
s-r\right)  \log\left(  1/h_{n}\right)  \right]  drds\\
& =\int_{0}^{\rho_{n}}e^{-2v\pi}R_{n}(v)dv
\end{align*}
with
\[
R_{n}(v)=\int1_{0\leq u\leq\rho_{n}-v}e^{-u\pi}\Gamma(2\gamma-3/2+{{\i}
}u)\mathcal{M}[p_{T}](2\gamma-1+{{\i}}u)e^{iu\log\left(  1/h_{n}\right)
}du.
\]
Using the saddle point method (see, e.g.,  de Bruijn, \cite{de1970asymptotic}), it is easy to show that
\begin{align*}
R_{n}(v)  & =e^{i\pi/2}\Gamma(2\gamma-3/2)\mathcal{M}[p_{T}](2\gamma
-1)\log^{-1}\left(  1/h_{n}\right)  \\
& +e^{i\pi}\left[  \left.  \frac{d}{du}\left(  \Gamma(2\gamma
-3/2+iu)\mathcal{M}[p_{T}](2\gamma-1+iu)\right)  \right\vert _{u=0}\right]
\log^{-2}\left(  1/h_{n}\right)  +O(\log^{-3}\left(  1/h_{n}\right)  )
\end{align*}
uniformly in $v.$ As a result
\[
\Re\lbrack I_{4,n,\rho_{n}}]=\left[  \left.  \frac{d}{du}\left(
\Gamma(2\gamma-3/2+iu)\mathcal{M}[p_{T}](2\gamma-1+iu)\right)  \right\vert
_{u=0}\right]  \log^{-2}\left(  1/h_{n}\right)  +O(\log^{-3}\left(
1/h_{n}\right)  ).
\]
Combining all above estimates, we finally get%

\begin{align}
\operatorname{Var}(Z_{n,1})  & =\frac{h_{n}^{2\left(  \gamma-1\right)  }}%
{\pi^{2}}\log^{-2}\left(  1/h_{n}\right)  \exp\left[  \pi/h_{n}\right]
\label{var_asymp}\\
& \times\left\{  \left[  \left.  \frac{d}{du}\left(  \Gamma(2\gamma
-3/2+iu)\mathcal{M}[p_{T}](2\gamma-1+iu)\right)  \right\vert _{u=0}\right]
\right.  \nonumber\\
& \left.  +O(\log^{-1}\left(  1/h_{n}\right)  )+O(\rho_{n}^{2}h_{n}\log
^{2}\left(  1/h_{n}\right)  )+O\Bigl(e^{-\pi\rho_{n}/2}\log^{2}\left(
1/h_{n}\right)  \Bigr)\right\}  .\nonumber
\end{align}
Using the decomposition
\eqref{var_asymp}, the Lyapounov condition for some $\delta>0$
\[
\frac{\mathbb{E}|Z_{n,1}-\mathbb{E}Z_{n,1}|^{2+\delta}}{n^{\delta
/2}[\operatorname{Var}(Z_{n,1})]^{1+\delta/2}}\rightarrow0,\quad
n\rightarrow\infty
\]
is easy to verify, since $\mathbb{E}Z_{n,1}\rightarrow p_{T}(x).$

\subsection{Proof of Proposition~\ref{arc}}

Let $\theta_{\max}$ be such that $A=\tan\theta_{\max}.$ At the arc
$K_{R}:w=R\,e^{i\theta},$ $-\theta_{\max}<\theta<\theta_{\max},$ it holds that%
\begin{align*}
\left\vert \int_{K_{R}}w^{z-1}\mathcal{L}[p_{T}](w)dw\right\vert  &  \leq
R\theta_{\max}\cdot R^{\operatorname{Re}z-1}\int e^{-xR\cos\theta_{\max}}%
p_{T}(x)dx\\
&  \leq B\theta_{\max}R^{\operatorname{Re}z}\int\,e^{-xR\cos\theta_{\max}%
}dx=B\theta_{\max}\frac{R^{\operatorname{Re}z-1}}{\cos\theta_{\max}%
}\rightarrow0,
\end{align*}
for $0<\operatorname{Re}z<1,$ where $\sup_{x>0}p_{T}(x)\leq B.$

\subsection{Proof of Proposition \ref{polR}}

By (\ref{FpX}) we derive for the bias of $p_{T,n}(x),$ $x>0,$
\begin{gather*}
|\mathbb{E}[p_{T,n}(x)]-p_{T}(x)|=\left\vert \frac{1}{2\pi}\int_{-U_{n}%
}^{U_{n}}\frac{\mathrm{E}\left[  \Phi_{n}(1-\gamma-\i v,X_{1})\right]
}{\Gamma(1-\gamma-\i v)}x^{-\i v}dv-\int_{-\infty}^{\infty}\mathcal{M}%
[p_{T}](\gamma+\i v)x^{-\gamma-\i v}dv\right\vert \\
\leq\left\vert \frac{1}{2\pi}\int_{-U_{n}}^{U_{n}}\frac{\int_{A_{n}}^{\infty
}\left[  \psi(\lambda)\right]  ^{-\gamma-\i v}\mathcal{F}[p_{X}](\lambda
)\psi^{\prime}(\lambda)d\lambda}{\Gamma(1-\gamma-\i v)}x^{-\gamma-\i
v}dv\right\vert +\frac{\left\vert x\right\vert ^{-\gamma}}{2\pi}%
\int_{\{|v|>U_{n}\}}\left\vert \mathcal{M}[p_{T}](\gamma+\i v)\right\vert dv\\
=:(\ast)_{1}+(\ast)_{2}%
\end{gather*}
Similar to the proof of Theorem \ref{sep_conv_rates} we have,
\[
(\ast)_{2}\leq\frac{\left\vert x\right\vert ^{-\gamma}}{2\pi}e^{-\beta U_{n}%
}\int_{\{|v|>U_{n}\}}\left\vert \mathcal{M}[p_{T}](\gamma+\i v)\right\vert
e^{\beta|v|}dv\leq e^{-\beta U_{n}}\frac{\left\vert x\right\vert ^{-\gamma}%
L}{2\pi},
\]
and by Lemma \ref{dif} and \eqref{gamma_asymp}%
\begin{align*}
(\ast)_{1}  &  \lesssim\frac{|x|^{-\gamma}}{2\pi}\int_{-U_{n}}^{U_{n}}%
\frac{\int_{A_{n}}^{\infty}\lambda^{-2\gamma+1}\left\vert \mathcal{F}%
[p_{X}](\lambda)\right\vert d\lambda}{\left\vert \Gamma(1-\gamma-\i
v)\right\vert }dv\\
&  \lesssim|x|^{-\gamma}U_{n}^{\gamma-1/2}e^{U_{n}\pi/2}\int_{A_{n}}^{\infty
}\frac{\lambda^{-\epsilon}}{\lambda^{2\gamma-1-\epsilon}}\left\vert
\mathcal{F}[p_{X}](\lambda)\right\vert d\lambda\lesssim|x|^{-\gamma}%
\frac{U_{n}^{\gamma-1/2}e^{U_{n}\pi/2}}{A_{n}^{\epsilon}}.
\end{align*}
As for the variance
\begin{align}
\mathrm{Var}(p_{T,n}(x))  &  =\frac{1}{(2\pi)^{2}n}\mathrm{Var}\left[
\int_{-U_{n}}^{U_{n}}\frac{\Phi_{n}(1-\gamma-\i v,X_{1})}{\Gamma(1-\gamma-\i
v)}x^{-\gamma-\i v}dv\right] \nonumber\\
&  \leq\frac{1}{(2\pi)^{2}n}|x|^{-2\gamma}\left[  \int_{-U_{n}}^{U_{n}}%
\frac{\sqrt{\mathrm{Var}[\Phi_{n}(1-\gamma-\i v,X_{1})]}}{\left\vert
\Gamma(1-\gamma-\i v)\right\vert }dv\right]  ^{2}, \label{hu}%
\end{align}
where%
\begin{align*}
\sqrt{\mathrm{Var}[\Phi_{n}(1-\gamma-\i v,X_{1})]}  &  \leq\int_{0}^{A_{n}%
}\sqrt{\mathrm{Var}[\left[  \psi(\lambda)\right]  ^{-\gamma-\i v}e^{\i
X_{1}\lambda}\psi^{\prime}(\lambda)]}d\lambda\\
&  =\int_{0}^{A_{n}}\left\vert \psi(\lambda)\right\vert ^{-\gamma}\left\vert
\psi^{\prime}(\lambda)\right\vert \sqrt{\mathrm{Var}[e^{\i X_{1}\lambda}%
]}d\lambda.
\end{align*}
Due to Lemma \ref{dif} we have
\[
\int_{1}^{A_{n}}\left\vert \psi(\lambda)\right\vert ^{-\gamma}\left\vert
\psi^{\prime}(\lambda)\right\vert \sqrt{\mathrm{Var}[e^{\i X_{1}\lambda}%
]}d\lambda\lesssim\int_{1}^{A_{n}}\lambda^{(1-2\gamma)}d\lambda\leq C_{0}%
\frac{A_{n}^{2\left(  1-\gamma\right)  }}{1-\gamma}%
\]
and in any case of Lemma \ref{dif} it holds%
\[
\int_{0}^{1}\left\vert \psi(\lambda)\right\vert ^{-\gamma}\left\vert
\psi^{\prime}(\lambda)\right\vert \sqrt{\mathrm{Var}[e^{\i X_{1}\lambda}%
]}d\lambda\leq\int_{0}^{1}\left\vert \psi(\lambda)\right\vert ^{-\gamma
}\left\vert \psi^{\prime}(\lambda)\right\vert d\lambda\leq\frac{C_{1}%
}{1-\gamma}%
\]
for some natural constant $C_{0},C_{1}>0.$ Hence from (\ref{hu}) we get by
(\ref{gamma_asymp}),
\[
|x|^{2\gamma}\mathrm{Var}(p_{T,n}(x))\leq\frac{1}{(2\pi)^{2}n}\left(
CU_{n}^{\gamma-1/2}e^{U_{n}\pi/2}\frac{A_{n}^{2\left(  1-\gamma\right)  }%
}{1-\gamma}\right)  ^{2}=:(\ast)_{3},
\]
and by gathering $(\ast)_{1},$ $(\ast)_{2},$ and $(\ast)_{3},$%
\[
\sqrt{\mathrm{E}\left[  x^{2\gamma}\left\vert p_{n}(x)-p(x)\right\vert
^{2}\right]  }\lesssim\frac{C}{2\pi\left(  1-\gamma\right)  \sqrt{n}}%
U_{n}^{\gamma-1/2}e^{U_{n}\pi/2}A_{n}^{2\left(  1-\gamma\right)  }+\frac
{U_{n}^{\gamma-1/2}e^{U_{n}\pi/2}}{A_{n}^{\epsilon}}+e^{-\beta U_{n}}.
\]
Next, the choices (\ref{ch1}) and (\ref{ch2}) lead to the desired result.

\section{Appendix}

\begin{prop}
\label{ExpBounds} Let $Z_{j},$ $j=1,\ldots, n, $ be a sequence of independent
identically distributed random variables. Fix some $u>0$ and define
\begin{align*}
\varphi_{n}(v):=\frac{1}{n}\sum_{j=1}^{n}\exp\left\{  \left(  u +\i v \right)
Z_{j}\right\}  , \quad v\in\mathbb{R}.
\end{align*}
Furthermore let $w$ be a positive monotone decreasing Lipschitz function on
$\mathbb{R}_{+}$ such that
\begin{equation}
\label{decreasing_w}0<w(z)\leq\frac{1}{\sqrt{\log(e+|z|)}}, \quad
z\in\mathbb{R}_{+}.
\end{equation}
Suppose that $\mathbb{E} \bigl[ e^{pu Z}\bigr]<\infty$ and $\mathbb{E}
\bigl[ |Z|^{p}\bigr]<\infty$ for some $p>2.$ Then with probability $1$
\begin{align}
\label{MINEQ}\left\|  \varphi_{n}- \varphi\right\|  _{L_{\infty}%
(\mathbb{R},w)}=O\left(  \sqrt{\frac{\log n}{n}}\right)  .
\end{align}

\end{prop}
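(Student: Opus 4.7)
My plan is to control the weighted empirical process $w(v)(\varphi_n(v) - \varphi(v))$, where $\varphi(v) := \E[e^{(u+\i v)Z}]$, by combining a truncation step, a grid-based Bernstein concentration, and a Lipschitz interpolation that exploits the decay of $w$ at infinity.

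First, I would truncate the summands: set $A_n := c \log n$ for a small constant $c>0$ to be chosen, $\tilde Z_j := Z_j \wedge A_n$, and $\tilde\varphi_n(v) := n^{-1}\sum_j e^{(u+\i v)\tilde Z_j}$. Under $\E[e^{puZ}] < \infty$ one has $\P(Z_1 > A_n) \lesssim n^{-puc}$, so for $c$ chosen with $puc > 2$ a union bound plus Borel--Cantelli yields $\varphi_n \equiv \tilde\varphi_n$ a.s. for all sufficiently large $n$. A H\"older-type estimate further shows that the truncation bias satisfies $|\E\tilde\varphi_n(v) - \varphi(v)| \lesssim n^{-(p-1)uc} = o(\sqrt{\log n/n})$ uniformly in $v$. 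After truncation, each summand satisfies $|e^{(u+\i v)\tilde Z_j}| \le M_n := n^{uc}$, and the per-summand variance is uniformly bounded by $\sigma_\star^2 := \E[e^{2uZ}]$.

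To handle the supremum in $v$, I would dyadically decompose $\mathbb{R} = \bigcup_{k\ge 0} A_k$ with $A_0 := [-1,1]$ and $A_k := \{v : 2^{k-1} \le |v| < 2^k\}$ for $k \ge 1$; on $A_k$ the hypothesis on $w$ gives $w(v) \lesssim 1/\sqrt{k \vee 1}$. In each $A_k$, I would place a uniform grid of spacing $\delta_k \asymp \sqrt{(k \vee 1)\log n/n}$, containing $N_k \lesssim 2^k\sqrt{n/((k \vee 1)\log n)}$ points. Both $\varphi_n$ and $\varphi$ are Lipschitz in $v$ with constants dominated (via the strong law of large numbers) by $L \asymp \E|Z|e^{uZ}$, finite by Cauchy--Schwarz under the stated moments, so that the weighted interpolation error $w(v) L \delta_k$ is $\lesssim \sqrt{\log n/n}$, uniformly in $k$. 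At each grid point $v_j \in A_k$, Bernstein's inequality applied to the bounded, truncated process yields
\begin{equation*}
\P\!\left(|\tilde\varphi_n(v_j) - \E\tilde\varphi_n(v_j)| > K\sqrt{(k \vee 1)\log n/n}\right) \le 2\exp\!\left(-c_\star K^2 (k \vee 1)\log n\right),
\end{equation*}
provided $uc < 1/2$ so that $M_n \sqrt{\log n/n} \to 0$ and Bernstein operates in its subgaussian regime. Summing over $k$ and all grid points and invoking Borel--Cantelli then completes the proof: for $K$ large, $\sum_k N_k \exp(-c_\star K^2(k \vee 1)\log n)$ is geometric in $k$ with total $O(n^{1/2 - c_\star K^2})$, summable in $n$.

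The main obstacle is the interplay between the truncation level and the validity of the subgaussian regime of Bernstein: the constant $c$ must simultaneously satisfy $puc > 2$ (for a summable tail of $\{\max_{j \le n} Z_j > A_n\}$) and $uc < 1/2$ (for the Bernstein bound to give a subgaussian tail after truncation). This balance is tight near $p = 2$, and in the borderline regime one may have to replace the plain post-truncation Bernstein step by a Fuk--Nagaev-type inequality for unbounded summands, or by a finer $L^p$ decomposition of the truncation remainder combined with Rosenthal-type moment bounds. A secondary technical point is to arrange the dyadic grid spacings $\delta_k$ so that the weighted interpolation error is uniform across all annuli simultaneously, which is why $\delta_k$ is chosen to grow like $\sqrt{k}$ and not kept constant, and so that the exponent $c_\star$ in the Bernstein estimate can be taken independent of $k$.
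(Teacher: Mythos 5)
Your architecture is essentially the paper's (truncate the summands, exploit the logarithmic decay of $w$ on exponentially growing annuli, Bernstein at grid points, a Lipschitz/oscillation bound between grid points, Borel--Cantelli), but there is a genuine gap in the calibration: your two requirements on the truncation level are incompatible under the stated hypothesis. You need $puc>2$ to make $\sum_n n\,\mathbb{P}(Z>c\log n)<\infty$, and $uc<1/2$ to keep Bernstein sub-Gaussian at level $\sqrt{\log n/n}$; together these force $p>4$, whereas the proposition assumes only $p>2$ (so the failure is not just ``tight near $p=2$'' --- the scheme as written fails for all $2<p\le 4$). The paper's proof resolves exactly this point by a different calibration and a different tail treatment: it truncates $e^{uZ}$ at $\Xi_n=\sqrt{n/\log n}$, i.e.\ as high as possible subject to $\Xi_n\lambda=O(1)$ with $\lambda\asymp\sqrt{\log n/n}$, so the term $\Xi_n w\lambda$ in the Bernstein denominator stays bounded and the exponent remains of order $\log n$; and for the un-truncated part it does \emph{not} use a per-$n$ union bound (which is what forces $puc>2$ in your argument) but bounds its expectation by Markov and runs Borel--Cantelli along dyadic blocks $\eta_k=2^k$, for which $\sum_k \eta_{k+1}\Xi_{\eta_k}^{-p}<\infty$ needs only $p>2$. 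Your suggested remedies (Fuk--Nagaev, Rosenthal) are not carried out, and the actual fix is this maximal/blocking argument rather than a refined deviation inequality.

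A secondary point: your claim that the Bernstein constant $c_\star$ can be taken independent of $k$ does not follow from $uc<1/2$, because your deviation level on the $k$-th annulus is $K\sqrt{(k\vee1)\log n/n}$, so the term $M_n t_k$ grows with $\sqrt{k}$ and the sub-Gaussian regime is left once $k\gtrsim n^{1-2uc}/\log n$; beyond that the Poissonian branch no longer dominates the $2^k$ grid points before the crude bound $w(v)|\varphi_n(v)-\varphi(v)|\lesssim 1/\sqrt{\log|v|}\le\sqrt{\log n/n}$ (valid only for $\log|v|\gtrsim n/\log n$, via the SLLN for $\frac1n\sum e^{uZ_j}$) takes over. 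You should either keep the weight inside the Bernstein bound (as the paper does, where $w(A_{k-1})\asymp k^{-1/2}$ deflates both the variance and the sup-norm term) or add an explicit argument for the very large annuli; the paper's own write-up is terse on this regime as well, but your proposal asserts the uniformity rather than proving it.
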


\begin{proof}
Fix a sequence \(\Xi_{n}\to \infty\) as \(n\to \infty.\) Denote
\begin{eqnarray*}
\mathcal{W}_{n}^{1}(v) &:=&
\frac{w(v) }{n} \;
	\sum_{j=1}^{n}
	\Bigl(
			e^{(u+\i v) Z_{j}}
			\mathbb{I}\left\{ e^{u Z_{j }} < \Xi_{n} \right\}
			-		
			\E \left[	
				e^{(u+\i v) Z}
				\mathbb{I}\left\{ e^{u Z} < \Xi_{n} \right\}
			\right]
	\Bigr),\\
\mathcal{W}_{n}^{2}(v) &:=&
\frac{w(v) }{n} \;
	\sum_{j=1}^{n}
	\Bigl(
			e^{(u+\i v) Z_{j}}
			\mathbb{I}\left\{ e^{u Z_{j }} \geq \Xi_{n} \right\}
			-		
			\E \left[	
				e^{(u+\i v) Z}
				\mathbb{I}\left\{ e^{u Z} \geq \Xi_{n} \right\}
			\right]
	\Bigr),
\end{eqnarray*}
where \(Z\) is a random variable with the same distribution as \(Z_{1}\). The main idea of the proof is to show that
\begin{eqnarray}
\label{aim1}
|\mathcal{W}_{n}^{1}(v)|&=&O_{a.s.}\left(\sqrt{\frac{\log n}{n}}\right),\\
\label{aim2}
|\mathcal{W}_{n}^{2}(v) |&=&O_{a.s.}\left(\sqrt{\frac{\log n}{n}}\right)
\end{eqnarray}
under a proper choice of the sequence \(\Xi_{n}.\)
\\
\textbf{Step 1.}  The aim of the first step is to show \eqref{aim1}. Consider the sequence \( A_{k}=e^{k},\, k\in \mathbb{N} \) and cover each
interval \( [-A_{k},A_{k}] \) by
\( M_{k}=\left(\lfloor 2A_{k}/\gamma \rfloor +1  \right) \) disjoint small
intervals \( \Lambda_{k,1},\ldots,\Lambda_{k,M_{k}} \) of
the length \( \gamma. \) Let \( v_{k,1},\ldots, v_{k,M_{k}} \) be the centers
of these intervals. We have for any natural \( K>0 \)
\begin{multline*}
\max_{k=1,\ldots,K}\sup_{A_{k-1}<| v |\leq A_{k}}|\mathcal{W}_{n}^{1}(v)|\leq
\max_{k=1,\ldots,K}\max_{1\leq m \leq M_{k}}
\sup_{v\in \Lambda_{k,m}}|\mathcal{W}_{n}^{1}(v)-\mathcal{W}_{n}^{1}(v_{k,m})|
\\
+\max_{k=1,\ldots,K}\max_{\Bigl\{\substack{ 1\leq m \leq M_{k}:\\
| v_{k,m} |>A_{k-1}}\Bigr\}}|\mathcal{W}_{n}^{1}(v_{k,m})|.
\end{multline*}
Hence for any positive \(\lambda\),
\begin{multline}
\label{DEC1}
\P\left( \max_{k=1,\ldots,K}\sup_{A_{k-1}< | v |\leq A_{k}}|\mathcal{W}_{n}^{1}(v)|>\lambda \right)
\leq
\P\left(\sup_{| v_{1}-v_{2} |<\gamma}|\mathcal{W}_{n}^{1}(v_{1})-\mathcal{W}_{n}^{1}(v_{2})|>\lambda/2\right)
\\
+
\sum_{k=1}^{K}\sum_{\Bigl\{\substack{ 1\leq m \leq M_{k}:\\
| v_{k,m} |>A_{k-1}}\Bigr\}}\P(|\mathcal{W}_{n}^{1}(v_{k,m})|>\lambda/2).
\end{multline}
We proceed with the first summand in \eqref{DEC1}. It holds for any \( v_{1},v_{2}\in \mathbb{R} \)
\begin{eqnarray}
\label{WNDIFF}
\nonumber
|\mathcal{W}_{n}^{1}(v_{1})-\mathcal{W}_{n}^{1}(v_{2})|&\leq&
2 \: \Xi_{n}
\left|
	w( v_{1})-w( v_{2} )
\right|
+\frac{1}{n}\sum_{j=1}^{n}
\Bigl[
	\left|
		e^{(u+\i v_{1}) Z_{j}} - e^{(u+\i v_{2}) Z_{j}}
	\right| I\left\{ e^{u Z_{j}} < \Xi_{n} \right\}
\Bigr]\\
&&\nonumber+
\Bigl|
\E \left[
		\left(
			 e^{(u+\i v_{1}) Z} - e^{(u+\i v_{2}) Z}
	 	\right)
	 I\left\{
	 		e^{u Z} < \Xi_{n}
		\right\}
	\right]
\Bigr|
\\
&\leq& \left| v_{1}-v_{2} \right|\:\Xi_{n}\:
\left[ 2 \:
L_{w}+\frac{1}{n}\sum_{j=1}^{n}| Z_{j}|+\E| Z| \right],
\end{eqnarray}
where \( L_{\omega } \) is the Lipschitz constant of \( w\) and \(Z\) is a random variable distributed as \(Z_{1}\). Next, the Markov inequality implies
\begin{eqnarray*}
\P\left\{ \frac{1}{n}\sum_{j=1}^{n}\Bigl[|  Z_{j} |-\E| Z |\Bigr]>
c \right\}\leq c^{-p}n^{-p}\:\E\left| \sum_{j=1}^{n}\Bigl[| Z_{j} |-\E| Z | \Bigr] \right|^{p}
\end{eqnarray*}
for any \( c >0. \) Note that
\begin{eqnarray*}
\E\left| \sum_{j=1}^{n}\Bigl[
	|  Z_{j} |-\E| Z |
\Bigr] \right|^{p}\leq c_{p} n^{p/2},
\end{eqnarray*}
for some constant \( c_{p} \) depending on \( p\)
and we obtain from \eqref{WNDIFF}
\begin{equation*}
\label{LINEQ}
\P\Bigl\{
	\sup_{| v_{1}-v_{2}|<\gamma}|\mathcal{W}_{n}^{1}(v_{1})-\mathcal{W}_{n}^{1}(v_{2})|>2\gamma \Xi_{n} (L_{\omega}+\E|Z|+c)\Bigr\}
	\leq
	 C_{p} \,c^{-p} n^{-p/2}.
\end{equation*}
Hence if \(\gamma \Xi_{n}\geq 1\) and \(\lambda\geq 4(L_{\omega}+\E|Z|+c)\) we get
Now we turn to the second term on the right-hand side of  \eqref{DEC1}.
Applying  the Bernstein inequality, we get
\begin{equation*}
\P\Bigl\{
	\sup_{| v_{1}-v_{2}|<\gamma}|\mathcal{W}_{n}^{1}(v_{1})-\mathcal{W}_{n}^{1}(v_{2})|>\lambda/2\Bigr\}
	\leq
	 C_{p} \,c^{-p} n^{-p/2}.
\end{equation*}
\begin{eqnarray*}
\P\left(|\Re\left[ \mathcal{W}_{n}^{1}(v_{k,m}) \right]|>\lambda/4\right) \leq   \exp\left(
-\frac{
	\lambda^{2}n
}{
	32(\Xi_{n} w(A_{k-1}) \lambda/3 +w^2(A_{k-1})\, \E[e^{2uZ}])}
\right).
\end{eqnarray*}
Similarly,
\begin{eqnarray*}
\P\left(|\Im\left[ \mathcal{W}_{n}^{1}(v_{k,m}) \right]|>\lambda/4\right)
\leq  \exp\left(
-\frac{
	\lambda^{2}n
}{
	32(\Xi_{n} w(A_{k-1}) \lambda/3 +w^2(A_{k-1})\, \E[e^{2uZ}])}
\right).
\end{eqnarray*}
Therefore
\begin{eqnarray*}
\sum_{\{ | v_{k,m} |>A_{k-1} \}}\P(|\mathcal{W}_{n}^{1}(v_{k,m})|>\lambda/2)\leq
\left(\lfloor 2A_{k}/\gamma \rfloor +1  \right)\exp\left(
-\frac{
	\lambda^{2}n
}{
	32(\Xi_{n} w(A_{k-1}) \lambda/3 +w^2(A_{k-1})\, \E[e^{2uZ}])}
\right).
\end{eqnarray*}
Set now \(\gamma= \sqrt{(\log n)/n},\)  \(\lambda = \zeta \sqrt{(\log n)/n}\) and \(\Xi_n=\sqrt{n/\log (n)},\) then
\begin{eqnarray*}
\sum_{\{ | v_{k,m} |>A_{k-1} \}}\P(|\mathcal{W}_{n}^{1}(v_{k,m})|>\lambda/2)
&\lesssim &
A_k\,\sqrt{\frac{n}{\log(n) }}
\exp\left(
-\frac{
	\lambda^{2}n
}{
	32\bigl(\Xi_{n} w(A_{k-1}) \lambda/3 +w^2(A_{k-1})\, \E[e^{2uZ}]\bigr)}
\right)
\\
&\lesssim &
\sqrt{\frac{n}{\log(n) }}
\exp\left(-k+k\left[1-
	\frac{\zeta^{2}\log (n)}{32(1+\E[e^{2uZ}])}\right]
\right).
\end{eqnarray*}
Assuming that \( \zeta^2\geq 32\theta (1+\E[e^{2uZ}])\) for some \(\theta>1\), we arrive at
\begin{eqnarray*}
\sum_{k=2}^{\infty}\sum_{\{ |v_{k,m} |>A_{k-1} \}}\P(|\mathcal{W}_{n}(v_{k,m})|>\lambda/2)
& \lesssim &   e^{-k}\frac{n^{1/2-\theta}}{\sqrt{\log(n)}}, \quad n\to \infty
\end{eqnarray*}
\textbf{Step 2}. Now we turn to \eqref{aim2}.
Consider the sequence
\[
R_{n}(v) :=
\frac{1 }{n} \;
	\sum_{j=1}^{n}
			e^{(u+\i v) Z_{j}}
			\mathbb{I}\left\{ e^{u Z_{j }} \geq \Xi_{n} \right\}.
\]
By the Markov inequality we get for any \(p>1\)
\[
	\left|
		\E \left[
			R_{n}(u)
		\right]
	\right|
	\leq \E \left[
		e^{u Z_{j}}
	\right]
	\P \left\{ e^{u Z_{j }} \geq \Xi_{n} \right\}
	\leq
	\Xi_{n}^{-p}
	\;\;
	\E \left[
		e^{u Z_{j}}
	\right]
	\;
	\E
	\left[
		e^{u p Z_{j}}
	\right] =  o\Bigl(\sqrt{(\log n)/ n}\Bigr)
\]
Set \(\eta_{k}=2^{k}, k=1,2,\ldots\), then it holds for any \(p>2\)
\[
\sum_{k=1}^{\infty}\P\Bigl\{ \max_{j=1,\ldots,\eta_{k+1}}   e^{u Z_{j}} \geq \Xi_{\eta_{k}}\Bigr\}
\leq
	\sum_{k=1}^{\infty} \eta_{k+1} \P\{ e^{u Z} \geq \Xi_{\eta_{k}}\}
	\leq
	\E [e^{p u Z}]
	\sum_{k=1}^{\infty} \eta_{k+1} \Xi_{\eta_{k}}^{-p}<\infty.
	\]
By the Borel-Cantelli lemma,
\[
	\P\Bigl\{ \max_{j=1,\ldots,\eta_{k+1}}   e^{u Z_{j}} \geq \Xi_{\eta_{k}} \quad\mbox {for infinitely many } k\Bigr\} = 0.
\]
From here it follows that \(R_{n}(u)- \E R_{n}(u) = o\Bigl(\sqrt{(\log n)/ n}\Bigr)\). This completes the proof.
\end{proof}

\begin{lem}
\label{dif} Let $(L_{t},\,t\geq0)$ be a L\'{e}vy process with the triplet
$(\mu,\sigma^{2},\nu).$ Suppose that $\int_{\{|x|>1\}}|x|\nu(dx)<\infty,$ and
that $\sigma$ and $\nu$ are not both zero. It then holds for $\psi
(u)=-\log(\mathbb{E}(\exp(\i uL_{t})))$
\begin{equation}
(i):\text{ }\left\vert \psi(u)\right\vert \lesssim u^{2}\text{ \ \ and
\ \ }(ii):\text{ }\left\vert \psi^{\prime}(u)\right\vert \lesssim u,\text{
\ \ }u\rightarrow\infty. \label{uinf}%
\end{equation}
Further, if
\begin{equation}
d=\mu+\int_{\{|x|>1\}}x\nu(dx)\neq0 \label{dr}%
\end{equation}
we have
\begin{equation}
(i):\text{ }\left\vert \psi(u)\right\vert \gtrsim u\text{ \ \ and
\ }(ii):\text{ \ }\left\vert \psi^{\prime}(u)\right\vert \lesssim1,\text{
\ \ }u\downarrow0. \label{u0}%
\end{equation}
If $d=0$ we have in the case $\nu(\{|x|>1\}\cap dx)\equiv0,$
\begin{equation}
(i):\text{ }\left\vert \psi(u)\right\vert \gtrsim u^{2},\text{ \ \ and
\ \ }(ii):\text{ }\left\vert \psi^{\prime}(u)\right\vert \lesssim u,\text{
\ \ }u\downarrow0, \label{u01}%
\end{equation}
and in the case $\nu(\{|x|>1\}\cap dx)\neq0,$%
\begin{equation}
(i):\text{ }\left\vert \psi(u)\right\vert \gtrsim u,\text{ \ \ and
\ \ }(ii):\text{ }\left\vert \psi^{\prime}(u)\right\vert =o(1),\text{
\ \ }u\downarrow0. \label{u02}%
\end{equation}

\end{lem}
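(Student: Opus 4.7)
My plan is to work throughout from the L\'evy--Khintchine representation, rewritten under the hypothesis $\int_{|x|>1}|x|\nu(dx)<\infty$ as
\[
\psi(u)=-\i d u+\tfrac{1}{2}\sigma^{2}u^{2}+\int_{\mathbb{R}}(1-e^{\i u x}+\i u x)\,\nu(dx),\qquad \psi'(u)=-\i d+\sigma^{2}u+\i\!\int_{\mathbb{R}}x(1-e^{\i u x})\,\nu(dx).
\]
For the upper bounds I would split each integral at $|x|=1$ and apply the pointwise estimates $|1-e^{\i u x}+\i u x|\le (u x)^{2}/2$, $|x(1-e^{\i u x})|\le u x^{2}$ on $|x|\le 1$, and $|1-e^{\i u x}+\i u x|\le 2+|ux|$, $|x(1-e^{\i u x})|\le 2|x|$ on $|x|>1$. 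Each resulting bound reduces to combinations of the finite quantities $\int_{|x|\le 1}x^{2}\nu(dx)$ and $\int_{|x|>1}|x|\nu(dx)$, yielding (\ref{uinf}) and the derivative bounds in (\ref{u0})(ii) and (\ref{u01})(ii). For (\ref{u02})(ii), dominated convergence gives $\int x(1-e^{\i u x})\nu(dx)\to 0$ as $u\downarrow 0$ (dominant $2|x|\mathbf{1}_{|x|>1}+u x^{2}\mathbf{1}_{|x|\le 1}$), and combined with $\sigma^{2}u\to 0$ and $d=0$ this gives $|\psi'(u)|=o(1)$.

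For the lower bounds at the origin the key preliminary is the refined expansion, obtained by Taylor expansion on $|x|\le 1$ and dominated convergence on $|x|>1$ applied to $(1-e^{\i u x}+\i u x)/u\to 0$ under the dominant $2|x|$,
\[
\psi(u)=-\i d u+\tfrac{1}{2}u^{2}\Bigl(\sigma^{2}+\!\int_{|x|\le 1}x^{2}\nu(dx)\Bigr)+o(u),\qquad u\downarrow 0.
\]
Case (\ref{u0})(i) with $d\ne 0$ follows directly. For (\ref{u01})(i), with $d=0$ and $\nu$ supported in $[-1,1]$, I would take real parts: $\Re\psi(u)=\tfrac{1}{2}\sigma^{2}u^{2}+\int_{|x|\le 1}(1-\cos(u x))\nu(dx)$, and use $1-\cos y\ge y^{2}/4$ on $|y|\le 1$ together with the non-triviality $(\sigma,\nu)\ne(0,0)$ to extract $\Re\psi(u)\gtrsim u^{2}$.

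The main obstacle is (\ref{u02})(i): establishing $|\psi(u)|\gtrsim u$ when $d=0$ but $\nu$ charges $\{|x|>1\}$. Since the Brownian, small-jump and drift contributions are all $O(u^{2})$, the only possible source of a linear-in-$u$ term is the large-jump integral
\[
J(u):=\int_{|x|>1}(1-e^{\i u x})\nu(dx)+\i u\!\int_{|x|>1}x\nu(dx).
\]
Under the finite first moment hypothesis, differentiability of the characteristic function of the finite measure $\nu\!\!\mid_{\{|x|>1\}}$ yields $\int_{|x|>1}(1-e^{\i u x})\nu(dx)=-\i u\!\int_{|x|>1}x\nu(dx)+o(u)$, so the two linear pieces of $J(u)$ cancel exactly and the preceding expansion is already tight; a direct approach therefore only produces $|\psi(u)|=O(u^{2})$. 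The plan for this case is to search for extra structure that breaks the cancellation: either retain the next-order term $-\tfrac{u^{2}}{2}\int_{|x|>1}x^{2}\nu(dx)$ and exploit the case $\int_{|x|>1}x^{2}\nu(dx)=\infty$ via a suitable oscillation estimate on dyadic annuli, or isolate a surviving asymmetric component of $\nu$ contributing to $\Im\psi(u)$. This is the step I expect to require the most work, and the fact that the naive estimate falls short of the stated bound suggests an auxiliary hypothesis on $\nu$ beyond $\int_{|x|>1}|x|\nu(dx)<\infty$ may be needed to complete the proof as stated.
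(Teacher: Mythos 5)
Your plan for the upper bounds \eqref{uinf}, the $O(u)$ and $O(1)$ derivative bounds, the $o(1)$ bound \eqref{u02}(ii), and the lower bounds in \eqref{u0}(i), \eqref{u01}(i) matches the paper's proof essentially step for step: the paper also starts from the L\'evy--Khintchine formula (written with $\mu$ rather than your $d$, but equivalent under the first-moment hypothesis), factors out $u^{2}$ on $|x|\le 1$, uses boundedness of $(1-e^{\i y}+\i y)/y^{2}$ and $(e^{\i y}-1)/y$, and handles the $|x|>1$ part by dominated convergence / Riemann--Lebesgue. For \eqref{u01}(i) the paper likewise passes to $\Re\psi(u)=\tfrac12\sigma^{2}u^{2}+u^{2}\int_{|x|\le1}\frac{1-\cos(ux)}{(ux)^{2}}x^{2}\nu(dx)$ and reads off $\gtrsim u^{2}$ from the nontriviality assumption.

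Your misgivings about \eqref{u02}(i) are justified: this bound is not provable as stated, and the paper's own argument here is in error. When $d=0$ the paper decomposes
\[
\psi(u)=\frac{u^{2}\sigma^{2}}{2}+u^{2}\int_{|x|\le1}\frac{1-e^{\i ux}+\i ux}{(ux)^{2}}x^{2}\nu(dx)+\int_{|x|>1}(1-\cos(ux))\nu(dx)+\i\int_{|x|>1}\bigl(ux-\sin(ux)\bigr)\nu(dx),
\]
notes $0\le\int_{|x|>1}(ux-\sin(ux))\nu(dx)\le u\int_{|x|>1}x\nu(dx)$ and $\int_{|x|>1}(ux-\sin(ux))\nu(dx)=u\int_{|x|>1}x\nu(dx)+o(1)$, and then concludes that this integral is $\asymp u$. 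That conclusion does not follow: $o(1)$ is not $o(u)$. In fact, by dominated convergence applied to $x\bigl(1-\frac{\sin(ux)}{ux}\bigr)\to 0$ with dominant $2|x|$, one has $\int_{|x|>1}(ux-\sin(ux))\nu(dx)=o(u)$, not $\asymp u$; this is precisely the exact linear cancellation you describe in $J(u)$. A concrete counterexample to \eqref{u02}(i): take $\sigma=0$, $\nu=\delta_{a}$ with $a>1$, $\mu=-a$. Then $d=0$, $\nu$ charges $\{|x|>1\}$, and $\psi(u)=(1-\cos(au))+\i(au-\sin(au))$, so $|\psi(u)|\asymp u^{2}$ as $u\downarrow0$, contradicting $|\psi(u)|\gtrsim u$. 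The correct lower bound in this regime is $|\psi(u)|\gtrsim u^{2}$ (already visible from $\Re\psi(u)$, since $1-\cos(ux)\gtrsim(ux)^{2}$ for $|ux|\le 1$), so the case split between $\nu(\{|x|>1\})=0$ and $\nu(\{|x|>1\})\neq 0$ is immaterial for part (i). You should also note for yourself that this correction is not cosmetic downstream: in the proof of Theorem~\ref{polR} the integrability of $\int_{0}^{1}|\psi(\lambda)|^{-\gamma}|\psi'(\lambda)|\,d\lambda$ for $\gamma$ close to $1$ relies on the $\gtrsim u$ bound in exactly this case, and with the correct $\gtrsim u^{2}$ (and only $|\psi'|=o(1)$) that integral is not automatically finite without an additional hypothesis on $\nu$.
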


\begin{proof} In general we have%
\begin{equation}
\psi(u)=-\i u\mu+\frac{u^{2}\sigma^{2}}{2}+\int_{\mathbb{R}%
}(1-e^{\i ux}+\i ux1_{\left\vert x\right\vert \leq1})\nu(dx),
\label{as1}%
\end{equation}
where
\begin{align}
\int_{\mathbb{R}}(1-e^{\i ux}+\i ux1_{\left\vert
x\right\vert \leq1})\nu(dx)  &  =u^{2}\int_{\{|x|\leq1\}}\frac{1-e^{\i
ux}+\i ux}{\left(  ux\right)  ^{2}}x^{2}\nu(dx)\label{as}\\
&  +\int_{\{|x|>1\}}\left(  1-e^{\i ux}\right)  \nu(dx).\nonumber
\end{align}
Note that
\[
0<c_{1}<\frac{\left\vert 1-e^{\i y}+\i y\right\vert }{y^{2}%
}<c_{2}\text{ \ for \ }y\in\mathbb{R},
\]
with $0<c_{1}<c_{2},$ and that%
\[
\int_{\{|x|>1\}}\left(  1-e^{\i ux}\right)  xv(dx)\longrightarrow
\int_{\{|x|>1\}}xv(dx)\text{ \ \ for }u\rightarrow\infty
\]
by Riemann-Lebesgue. This yields (\ref{uinf})-$(i).$ It is not difficult to
show by standard arguments that due to the integrability condition we have
\[
\psi^{\prime}(u)=-\i\mu+u\sigma^{2}-\i\int_{\mathbb{R}%
}(e^{\i ux}-1_{\left\vert x\right\vert \leq1})x\nu(dx).
\]
Next, (\ref{uinf})-$(ii)$ follows by observing that%
\[
\int_{\{|x|\leq1\}}(e^{\i ux}-1)xv(dx)=u\int_{\{|x|\leq1\}}%
\frac{e^{\i ux}-1}{ux}x^{2}\nu(dx),
\]
where $\left(  e^{\i y}-1\right)  /y$ is bounded for $y\in
\mathbb{R}.$
Suppose $d\neq0.$ By (\ref{dr}), $\psi^{\prime}(0)=-\i d%
\neq0,$ and since $\psi(0)=0$ we have (\ref{u0})-$(i),$ and (\ref{u0})-$(ii)$
is obvious. Next suppose $d=0,$ i.e. $\psi^{\prime}(0)=0.$ We then have,%
\begin{align*}
\psi(u)  &  =\psi(u)-u\psi^{\prime}(0)=\psi(u)+\i ud \\
&  =\frac{u^{2}\sigma^{2}}{2}+\int_{\mathbb{R}}(1-e^{\i%
ux}+\i ux1_{\left\vert x\right\vert \leq1})\nu(dx)+\i%
u\int_{\{|x|>1\}}x\nu(dx)\\
&  =\frac{u^{2}\sigma^{2}}{2}+\int_{\{|x|\leq1\}}(1-e^{\i %
ux}+\i ux)\nu(dx)\\
&  +\int_{\{|x|>1\}}(1-e^{\i ux})\nu(dx)+\i u\int
_{\{|x|>1\}}x\nu(dx)
\end{align*}
and%
\begin{align*}
\psi^{\prime}(u)  &  =\psi^{\prime}(u)-\psi^{\prime}(0)\\
&  =u\sigma^{2}-\i\int_{\mathbb{R}}(e^{\i ux}-1_{\left\vert
x\right\vert \leq1})x\nu(dx)+\i\int_{\{|x|>1\}}x\nu(dx).
\end{align*}
If $\nu(\{|x|>1\}\cap dx)\equiv0$ we thus have%
\begin{align*}
\psi(u)  &  =\frac{u^{2}\sigma^{2}}{2}+\int_{\{|x|\leq1\}}(1-e^{\i%
ux}+\i ux)\nu(dx)\\
&  =\frac{u^{2}\sigma^{2}}{2}+u^{2}\int_{\{|x|\leq1\}}\frac{1-e^{\i %
ux}+\i ux}{\left(  ux\right)  ^{2}}x^{2}\nu(dx)
\end{align*}
and we observe that%
\[
\operatorname{Re}\left(  1-e^{\i ux}+\i ux\right)
=1-\cos(ux)\geq0
\]
so in particular $\operatorname{Re}\psi(u)\gtrsim u^{2}$ while $\left\vert
\psi(u)\right\vert \lesssim u^{2}.$ Hence (\ref{u01})-$(i)$ is shown. Then,
\begin{align*}
\psi^{\prime}(u)  &  =u\sigma^{2}-\i\int_{\{|x|\leq1\}}%
(e^{\i ux}-1)x\nu(dx)\\
&  =u\sigma^{2}-\i u\int_{\{|x|\leq1\}}\frac{e^{\i ux}%
-1}{ux}x^{2}\nu(dx)
\end{align*}
and note again that $\left(  e^{\i y}-1\right)  /y$ is bounded, hence
we have (\ref{u01})-$(ii).$ Finally, \ if $d=0$ and $\nu(\{|x|>1\}\cap dx)\neq0,$ let us write%
\begin{align*}
\psi(u)  &  =\frac{u^{2}\sigma^{2}}{2}+u^{2}\int_{\{|x|\leq1\}}\frac
{1-e^{\i ux}+\i ux}{\left(  ux\right)  ^{2}}x^{2}\nu(dx)\\
&  +\int_{\{|x|>1\}}(1-\cos(ux))\nu(dx)+\i\int_{\{|x|>1\}}\left(
ux-\sin(ux)\right)  \nu(dx)
\end{align*}
where%
\[
0\leq\int_{\{|x|>1\}}\left(  ux-\sin(ux)\right)  \nu(dx)\leq u\int
_{\{|x|>1\}}x\nu(dx)\lesssim u,
\]
but due to dominated convergence also%
\[
\int_{\{|x|>1\}}\left(  ux-\sin(ux)\right)  \nu(dx)=u\int_{\{|x|>1\}}%
x\nu(dx)+o(1).
\]
Hence,%
\[
\int_{\{|x|>1\}}\left(  ux-\sin(ux)\right)  \nu(dx)\asymp u,\text{
\ \ }u\downarrow0,
\]
and from this (\ref{u02})-$(i).$ For the derivative we have,%
\begin{align*}
\psi^{\prime}(u)  &  =u\sigma^{2}-\i\int_{\mathbb{R}}%
(e^{\i ux}-1_{\left\vert x\right\vert \leq1})xv(dx)+\i %
\int_{\{|x|>1\}}x\nu(dx)\\
&  =u\sigma^{2}-\i u\int_{\{|x|\leq1\}}\frac{e^{\i ux}%
-1}{ux}x^{2}\nu(dx)-\i\int_{\{|x|>1\}}\left(  e^{\i%
ux}-1\right)  x\nu(dx)\\
&  =o(1),\text{ \ \ \ }u\downarrow0,
\end{align*}
by similar arguments, i.e. (\ref{u02})-$(ii).$
\end{proof}

\begin{lem}
\label{lemma_gamma_asymp} For any $\alpha\geq-2,$ there exist positive
constants $C_{1}$ and $C_{2}(\alpha)$ such that uniformly for $\left\vert
\beta\right\vert \geq2,$
\begin{align}
\label{gamma_asymp}C|\beta|^{\alpha-1/2}e^{-|\beta|\pi/2}\leq\left\vert
\Gamma(\alpha+\i\beta)\right\vert \leq C_{\alpha}|\beta|^{\alpha
-1/2}e^{-|\beta|\pi/2}.
\end{align}

\end{lem}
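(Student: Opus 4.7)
The plan is to derive the claimed two-sided bound as a direct consequence of Stirling's formula in the complex plane, together with a compactness argument to handle the bounded range of $|\beta|$.

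First I would invoke the complex form of Stirling's formula,
\[
\Gamma(z) = \sqrt{2\pi}\, z^{z-1/2} e^{-z}\bigl(1 + O(|z|^{-1})\bigr),
\]
valid as $|z|\to\infty$ uniformly in the sector $|\arg z| \le \pi - \delta$. For $z = \alpha + \i\beta$ with fixed $\alpha \ge -2$, as $|\beta|\to\infty$ we stay far from the negative real axis: $\arg z \to \mathrm{sign}(\beta)\cdot\pi/2$, so Stirling applies.

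Next I would compute the modulus. Writing $\log z = \tfrac{1}{2}\log(\alpha^2+\beta^2) + \i\arg z$ and expanding
\[
\Re\bigl((z-\tfrac12)\log z - z\bigr) = (\alpha-\tfrac12)\log|z| - \beta\arg z - \alpha,
\]
the expansions $\log|z| = \log|\beta| + O(|\beta|^{-2})$ and $\arg z = \mathrm{sign}(\beta)\tfrac{\pi}{2} - \tfrac{\alpha}{\beta} + O(|\beta|^{-3})$ yield
\[
\bigl|\Gamma(\alpha+\i\beta)\bigr| = \sqrt{2\pi}\,|\beta|^{\alpha-1/2} e^{-|\beta|\pi/2}\bigl(1+o(1)\bigr), \qquad |\beta|\to\infty.
\]
In particular, there is some $N = N(\alpha) \ge 2$ such that for $|\beta| \ge N$ the ratio
\[
g_\alpha(\beta) := \frac{|\Gamma(\alpha+\i\beta)|}{|\beta|^{\alpha-1/2} e^{-|\beta|\pi/2}}
\]
lies between $\tfrac{1}{2}\sqrt{2\pi}$ and $2\sqrt{2\pi}$, giving both the upper and lower bound in this regime.

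Finally, for $2 \le |\beta| \le N$ the function $g_\alpha$ is continuous on the compact set $\{2\le |\beta|\le N\}$, and strictly positive because $\Gamma$ has no zeros in $\mathbb{C}$. Hence $g_\alpha$ attains a positive minimum $m_\alpha$ and a finite maximum $M_\alpha$ on this set. Setting $C := \min(\tfrac{1}{2}\sqrt{2\pi}, m_\alpha)$ and $C_\alpha := \max(2\sqrt{2\pi}, M_\alpha)$ yields the stated uniform bounds for all $|\beta| \ge 2$. The only real work is the explicit expansion of $\Re((z-1/2)\log z - z)$ carried out above; no genuine obstacle arises since we remain uniformly away from the poles of $\Gamma$ and within a Stirling sector.
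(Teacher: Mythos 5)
Your proof is correct. The paper itself gives no proof of this lemma---it is stated in the Appendix and used as a classical fact about the Gamma function---so there is nothing to compare against; your route via the complex Stirling formula plus a compactness argument on $2\le|\beta|\le N(\alpha)$ is the standard derivation, and the key computation $\Re\bigl((z-\tfrac12)\log z - z\bigr)=(\alpha-\tfrac12)\log|z|-\beta\arg z-\alpha$ together with $\arg z=\mathrm{sign}(\beta)\tfrac{\pi}{2}-\alpha/\beta+O(|\beta|^{-3})$ is carried out correctly (and for $\alpha\ge-2$, $|\beta|\ge2$ one stays in a Stirling sector, e.g.\ $|\arg z|\le 3\pi/4$, away from the poles). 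One caveat worth flagging: your construction makes both constants depend on $\alpha$ (through $N(\alpha)$, $m_\alpha$, $M_\alpha$), which satisfies the literal quantifier ``for any $\alpha$ there exist constants'', but the statement's notation ($C_1$ versus $C_2(\alpha)$, i.e.\ $C$ versus $C_\alpha$ in the display) and the way the bound feeds into Corollary~\ref{cor_integ_gamma} (where the constant multiplying $e^{U\pi/2}$ is asserted not to depend on $\alpha$) suggest the lower constant is intended to be uniform in $\alpha$. That stronger, $\alpha$-uniform lower bound also follows from Stirling, since the formula holds uniformly on the sector containing $\{\alpha\ge-2,\ |\beta|\ge2\}$, but it requires tracking the $\alpha$-dependence of the error and of the ratio $\bigl(|z|/|\beta|\bigr)^{\alpha-1/2}e^{|\beta|\arctan(\alpha/|\beta|)-\alpha}$ rather than fixing $\alpha$ and letting $|\beta|\to\infty$; as written, your argument does not deliver that uniformity, though it does prove the lemma as literally quantified.
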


\begin{cor}
\label{cor_integ_gamma} For all $0<\alpha<1/2$ and all $U>2,$ it holds%
\begin{equation}
\label{integ_gamma}\int_{-U}^{U}\frac{d\beta}{\left\vert \Gamma(\alpha+\i
\beta)\right\vert }\leq CU^{1/2-\alpha}e^{U\pi/2}%
\end{equation}
for a constant $C>0.$ For $\alpha>1/2,$ we have
\begin{equation}
\label{integ_gamma1}\int_{-U}^{U}\frac{d\beta}{\left\vert \Gamma(\alpha
+\i\beta)\right\vert }\leq C_{1}(\alpha)+C_{2}e^{U\pi/2}%
\end{equation}
where $C_{2}$ does not depend on $\alpha.$
\end{cor}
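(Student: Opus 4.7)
The plan is to split the integral at $|\beta|=2$ as $\int_{-U}^{U} = \int_{-2}^{2} + \int_{2\le|\beta|\le U}$ and estimate the two pieces separately, using compactness on the central piece and the lower bound of Lemma~\ref{lemma_gamma_asymp} on the tail.

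For the central piece, since $\alpha>0$ the function $\Gamma(\alpha+\i\beta)$ is continuous and nowhere vanishing on the compact set $\{\alpha+\i\beta : |\beta|\le 2\}$ ($\Gamma$ has no zeros anywhere in $\mathbb{C}$ and no poles in the open right half-plane), so $1/|\Gamma(\alpha+\i\beta)|$ is bounded there by a quantity depending only on $\alpha$. This gives a finite contribution $C_1(\alpha)$. For the tail, Lemma~\ref{lemma_gamma_asymp} supplies the $\alpha$-uniform lower bound $|\Gamma(\alpha+\i\beta)|\ge C|\beta|^{\alpha-1/2}e^{-|\beta|\pi/2}$ for $|\beta|\ge 2$, and thus reduces the problem to estimating
\[
\int_{2}^{U}\beta^{1/2-\alpha}e^{\beta\pi/2}\,d\beta.
\]

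Now the sign of $1/2-\alpha$ dictates the answer. If $0<\alpha<1/2$, the polynomial factor is monotone increasing on $[2,U]$, so I would pull out the maximum $U^{1/2-\alpha}$ and combine it with the elementary bound $\int_2^U e^{\beta\pi/2}d\beta\lesssim e^{U\pi/2}$ to conclude that the tail is at most a multiple of $U^{1/2-\alpha}e^{U\pi/2}$. Since $U>2$ ensures that $U^{1/2-\alpha}e^{U\pi/2}$ is bounded below by a positive constant, the central term $C_1(\alpha)$ is absorbed into the same form, producing \eqref{integ_gamma}. If $\alpha>1/2$, the polynomial factor is instead decreasing and uniformly bounded by $2^{1/2-\alpha}<1$ on $[2,U]$, so it drops out of the estimate entirely; the tail is then bounded by $(2/\pi)e^{U\pi/2}$ with a constant independent of $\alpha$. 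Combined with $C_1(\alpha)$, this yields \eqref{integ_gamma1} with universal $C_2$.

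There is essentially no obstacle here: the heavy lifting is done by Lemma~\ref{lemma_gamma_asymp}, which crucially provides a lower bound whose constant does not depend on $\alpha$. The only subtlety worth tracking is that in the $\alpha>1/2$ case the coefficient of $e^{U\pi/2}$ must remain independent of $\alpha$, which happens automatically because the monotonically decreasing polynomial factor $\beta^{1/2-\alpha}$ can only improve the bound. The excluded value $\alpha=1/2$ is precisely where the two regimes meet, making it natural that a separate case analysis is required.
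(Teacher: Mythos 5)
Your proof is correct and is essentially the derivation the paper intends: the corollary is stated as a direct consequence of Lemma~\ref{lemma_gamma_asymp} (no separate proof is given), obtained exactly as you do by splitting at $|\beta|=2$, bounding the central piece by compactness, and integrating the lemma's $\alpha$-uniform lower bound on $|\Gamma(\alpha+\i\beta)|$, with the monotonicity of $\beta^{1/2-\alpha}$ handling the two regimes.
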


\bibliographystyle{plain}
\bibliography{est_subbm_bibliography}

\end{document}